 \newtheorem{theorem}{Theorem}
 \newtheorem{conjecture}[theorem]{Conjecture}
 \newtheorem{definition}[theorem]{Definition}
 \newtheorem{example}[theorem]{Example}
 \newtheorem{lemma}[theorem]{Lemma}
 \newtheorem{proposition}[theorem]{Proposition}
 \newtheorem{remark}[theorem]{Remark}
 \newcommand{\C}{\mathbb{C}}
 \newcommand{\R}{\mathbb{R}}
 \newcommand{\re}{\mathrm{Re}}
 \newcommand{\im}{\mathrm{Im}}
 \newcommand{\E}{\mathcal{E}^{\mathrm{par}}}
 \newcommand{\s}{\mathcal{S}}
 \newcommand{\shf}{\mathcal{S}^{\sharp\flat}}
 \newcommand{\shfs}{\mathcal{S}^{\sharp \flat }(\sigma_0, \sigma_1)}
 \newcommand{\abs}[1]{\left\vert#1\right\vert}
\begin{document}
 	
 	\title[On generalized Li criterion for a certain class of $L-$functions]
 	{On generalized Li criterion for a certain class of $L-$functions}
 	\author[A.-M. Ernvall-Hyt\"onen]{Anne-Maria Ernvall-Hyt\"onen}
 	\address{Department of Mathematics and Statistics, University of Helsinki, P.O. Box 68, 00014 Helsinki, Finland, e-mail: ernvall@mappi.helsinki.fi}
 	\author[A. Od\v{z}ak]{Almasa Od\v{z}ak}
 	\address{Department of Mathematics, University of Sarajevo, Zmaja od Bosne 35, 71\,000 Sarajevo,
 		Bosnia and Herzegovina, e-mail: almasa@pmf.unsa.ba}
 	\author[L.~Smajlovi\'c]{Lejla Smajlovi\'c}
 	\address{Department of Mathematics, University of Sarajevo, Zmaja od Bosne 35, 71\,000 Sarajevo,
 		Bosnia and Herzegovina, e-mail: lejlas@pmf.unsa.ba}
 	\author[M. Su\v si\'c ]{Medina Su\v si\'c}
 	\address{Department of Mathematics, University of Sarajevo, Zmaja od Bosne 35, 71\,000 Sarajevo,
 		Bosnia and Herzegovina, e-mail: medina.susic@pmf.unsa.ba}

\thanks{The ideas presented in the paper were discussed during the third WINE conference, held in October 2013 and funded by CIRM, Microsoft Research, Number theory foundation, NSF and Clay Mathematics Institute. Their support is gratefully acknowledged.
\\The research of Anne-Maria Ernvall-Hyt\"onen was supported by the Academy of Finland  grant no. 138337.}
\maketitle
\begin{abstract}

We define generalized Li coefficients, called $\tau-$Li coefficients for a very broad class $\shfs$ of $L-$functions that contains the Selberg class, the class of all automorphic $L-$functions and the Rankin-Selberg $L-$functions, as well as products of suitable shifts of those functions. We prove the generalized Li criterion for zero-free regions of functions belonging to the class $\shfs$, derive an arithmetic formula for the computation of $\tau-$Li coefficients and conduct numerical investigation of $\tau-$Li coefficients for a certain product of shifts of the Riemann zeta function.

Keywords:
$L$-functions,  Generalized Li coefficients, Generalized Li criterion

MSC: 11M26, 11M36, 11M41
\end{abstract}

\section{Introduction}


The Li criterion for the Riemann hypothesis, proved in \cite{XJLi} is a simple positivity criterion stating that the Riemann hypothesis is equivalent to non-negativity of a certain sequence of real numbers, called the Li coefficients. The Li criterion is generalized to many classes of functions. For Dirichlet and Hecke $L-$functions it is proved in \cite{XJLi04}, for automorphic $L-$functions the Li criterion is deduced in \cite{Lagarias}, for the Rankin-Selberg $L-$functions it is proved in \cite{OdSm09}. In \cite{Sm10}, a class $\s^{\sharp\flat}$ that contains both the Selberg class $\s$ and the class of automorphic $L-$functions is introduced and the Li criterion for this class is obtained.

The Li coefficients for various classes of zeta and $L-$functions may be generalized in different ways. A. Droll \cite{Dr12}, following \cite{Freitas}, defined for $\tau \in[1,2)$ and positive integers $n$ the generalized $\tau-$Li coefficients $\lambda_F(n,\tau)$, for $F\in\s^{\sharp\flat}$ as
\begin{equation} \label{tau Li coeff sum}
\lambda_F(n,\tau)=\left.\sum_{\rho\in Z(F)}\right. ^{\ast} \left( 1-\left( \frac{\rho}{\rho-\tau}\right)^n \right)
\end{equation}
where the sum is taken over the set $Z(F)$ of all non-trivial zeros of $F$ and $\ast$ denotes that the sum is taken in the sense of the limit $\lim\limits_{T \to \infty} \sum\limits_{|\im \rho| \leq T}$.

In \cite{Dr12} it is proved that non-negativity of $\re (\lambda_F(n,\tau))$ for all positive integers $n$ is equivalent to the statement that all non-trivial zeros of $F\in\shf$ are in the strip $1-\tau/2\leq \re s\leq \tau/2$. We will refer to this criterion as $\tau-$Li criterion. It is a generalization of the Li criterion in the sense that non-negativity of $\re (\lambda_F(n,1))$ is equivalent to the statement that all non-trivial zeros of $F$ are on the critical line $\re s=1/2$, i.e. non-negativity of $\re (\lambda_F(n,1))$ is equivalent to the generalized Riemann hypothesis for $F$. However, for $\tau >1$ the $\tau-$Li criterion is a weaker statement, as it produces only zero-free regions. The $\tau-$Li criterion for the Rankin-Selberg $L-$functions and asymptotic behavior of coefficients $\lambda_F(n,\tau)$ was deduced in \cite{WINEpaper}.

The main purpose of investigation conducted in this paper is to define $\tau-$Li coefficients for a very broad class of $L-$functions, we denote by $\s^{\sharp\flat}(\sigma_0, \sigma_1)$, prove the $\tau-$Li criterion for this class, derive an arithmetic formula for the computation of $\tau-$Li coefficients and investigate analytically and numerically the properties of $\tau-$Li coefficients.

The class $\s^{\sharp \flat}(\sigma_0, \sigma_1)$ consists of all Dirichlet series $F$ converging in some half-plane $\re s>\sigma_0\geq 1$, such that the meromorphic continuation of $F$ to $\C$ is a meromorphic function of a finite order with at most finitely many poles, satisfying a functional equation relating values $F(s)$ with $\overline{F(\sigma_1-\overline{s})}$ up to multiplicative gamma factors and such that the logarithmic derivative $F'/F$ has a Dirichlet series representation converging in the half plane $\re s>\sigma_0\geq1$. The assumptions posed on $F\in \s^{\sharp \flat}(\sigma_0, \sigma_1)$ imply that all non-trivial zeros of $F$ lie in the strip $\sigma_1-\sigma_0 \leq \re s \leq \sigma_0$. The class $\shfs$ contains $\shf$ as its subclass. Furthermore, the class $\shfs$ contains products of suitable shifts of $L-$functions from $\shf$, as well as products of shifts of certain $L-$functions possessing an Euler product representation that are not in $\shf$ (such as the Rankin-Selberg $L-$functions).

Another reason why we introduce the class $\shfs$ lies in the fact that, as proved in Theorem \ref{thm: tau li implies Li} below, for $F\in\shf$ and an arbitrary positive real number $a>0$ the $\tau-$Li criterion for the function $F(s-a)F(s+a)$ (that belongs to the class $\shf(a+1,1)$ and does not belong to $\shf$) with $\tau=2a+1$ is equivalent to the generalized Riemann hypothesis for $F$. On the other hand, as proved in Theorem \ref{thm: arithmetic f-la 1} the $\tau-$Li coefficients for $\tau=2a+1$ may be expressed in terms of a sum of derivatives of digamma functions, certain finite sums and a rapidly converging Dirichlet series (for large values of $a$), an expression that is suitable both for analytic considerations and numerical computations since the sum of derivatives of digamma functions may be easily estimated up to an error term $n^{-k}$, for an arbitrary positive integer $k$ (see e.g. computations in \cite{WINEpaper}, Section 6).

Let us note that for $F\in\shf$ the $\tau-$Li criterion for $\tau=1$ is equivalent to the generalized Riemann hypothesis, however, in this case an arithmetic expression for the Li coefficients is far from being rapidly converging, see Theorem 6.1. and Theorem 6.2. from \cite{Sm10}.

The paper is organized as follows. In Section 2 we recall definition of the class $\shf$, the fundamental class of functions and introduce the class $\shfs$. In Section 3 we prove an explicit formula for the class $\shfs$ and, as a consequence, derive that for $F \in \shfs$ and $s \notin Z(F)$, the series $\left.\sum\limits_{\rho\in Z(F)}\right. ^{\ast} \frac{1}{s-\rho}$ converges to the logarithmic derivative of the completed function $\xi_F$ at $s$. In Section 4 we show that $\tau-$Li coefficients attached to a function $F \in \shfs$ are well defined and derive the $\tau-$Li criterion in this setting. In Section 5 we derive arithmetic formulas for computation of Li coefficients. In Section 6 we conduct numerical computation of $\tau-$Li coefficients for the function $F(s)= \prod\limits_{i=1}^{K} \zeta(s-\alpha_i) \zeta(s+\alpha_i)$ where $\zeta$ denotes the Riemann zeta function and $\alpha_i$ are positive real numbers. We give an effective bound on the error term in computations and evaluate coefficients for a different range of $\tau$ in the case when $K=4$ and $\alpha_i=i$, $i=1,2,3,4$. Finally, we pose certain conjectures on the asymptotic behavior of $\tau-$Li coefficients based on these numerical calculations.


\section{Certain classes of $L-$functions}

\subsection{The Selberg class of functions}

The \emph{Selberg class} of functions $\mathcal{S}$, introduced by A. Selberg in
\cite{Selberg}, is a general class of Dirichlet series $F$ satisfying the
following properties:

\begin{itemize}
\item[(i)]  (Dirichlet series) $F$ posses a Dirichlet series representation
\begin{equation} \label{Dir series}
F(s)=\sum_{n=1}^{\infty}\frac{a_{F}(n)}{n^{s}},
\end{equation}
that converges absolutely for $\re s >1.$

\item[(ii)]  (Analytic continuation) There exists an integer $m \geq 0$ such
that $(s-1)^{m}F(s)$ is an entire function of finite order.

\item[(iii)]  (Functional equation) The function $F$ satisfies the
functional equation

$$
\xi_{F}(s)=\omega \overline{\xi_{F}(1-\bar{s})},
$$
where $\xi_F$ is the completed function defined by

$$
\xi_{F}(s)=F(s)Q^{s}_{F}\prod_{j=1}^{r}\Gamma(\lambda_{j} s+\mu_{j}),
$$

with $Q_{F}>0$, $r\geq 0$, $\lambda _{j}>0$, $\left\vert \omega \right\vert =1$, $%
\re \mu _{j}\geq 0$, $j=1,\ldots ,r.$
\item[(iv)]  (Ramanujan conjecture) For every $\epsilon >0,~a_{F}(n)\ll
n^{\epsilon }$.

\item[(v)]  (Euler product)
$$
\log F(s)=\sum_{n=1}^{\infty }\frac{b_{F}(n)}{n^{s}},
$$
where $b_{F}(n)=0$, for all $n\neq p^{m}$ with $m\geq 1$ and $p$ prime, and $%
b_{F}(n)\ll n^{\theta }$, for some $\theta <\frac{1}{2}$.
\end{itemize}

The \emph{extended Selberg class} $\mathcal{S}^{\sharp }$,
introduced in \cite{Kacz-PerelliActa} is a class of functions
satisfying conditions (i), (ii) and (iii).

It is conjectured that the Selberg class coincides with the class
of all automorphic $L-$functions. However, the Ramanujan
conjecture, boundedness of coefficients in the Dirichlet series
representation of $\log L(s,\pi )$ and the bound $\re \mu
_{j}\geq 0$ on the archimedean Langlands parameters have not yet
been verified for all automorphic $L-$functions. In order to be able to apply results unconditionally to all automorphic $L-$functions, in \cite{Sm10}, a broader class of functions, denoted by $\shf$ was defined.

\subsection{Class $\mathcal{S}^{\sharp \flat }$}

The class $\mathcal{S}^{\sharp \flat }$ is a class of functions $F\in \mathcal{S}^{\sharp }$ satisfying the following modification of the axiom (v) of the Selberg class:

\begin{itemize}
\item[(v*)] (Euler sum)  The logarithmic derivative of the function $F$ possesses a
Dirichlet series representation
$$
\frac{F^{\prime }}{F}(s)=-\sum_{n=2}^{\infty }\frac{c_{F}(n)}{n^{s}},
$$
converging absolutely for $\re s>1$.

\end{itemize}

\subsection{Class $\mathcal{S}^{\sharp \flat }(\sigma_0, \sigma_1)$}

Though very broad, the class $\shf$ still does not (unconditionally) contain some number theoretic functions possessing an Euler product representation. For example, the Rankin-Selberg $L-$functions attached to a convolution of two irreducible, unitary cuspidal representations of $GL_m(\mathbb{A}_k)$ and $GL_n(\mathbb{A}_k)$ over a number field $k$ might have poles on the line $\re s =1$, different from $s=1$, hence they do not satisfy axiom (ii). Furthermore, coefficients $\mu_j$ appearing in the functional equation (iii) for the Rankin-Selberg $L-$functions unconditionally satisfy the bound $\re \mu_j >-1$, different from the bound $\re \mu_j \geq 0$, posed in axiom (iii). In order to apply our results to all automorphic $L-$functions, as well as to zeta and $L-$products of the type $$G(s)=\prod_{i=1}^K F(s-\alpha_i)F(s+\alpha_i),$$ for some $\alpha_i \in\C$, thro\-ugh\-out this paper, we shall focus on the class $\mathcal{S}^{\sharp \flat }(\sigma_0, \sigma_1)$ of functions where axioms (i), (ii), (iii) and (v) are modified and axiom (iv) is not assumed.

Let $\sigma_0$ and $\sigma_1$ be real numbers such that $\sigma_0 \geq \sigma_1>0$. The class $\mathcal{S}^{\sharp \flat }(\sigma_0, \sigma_1)$ is the class of functions $F$ satisfying following four axioms:

\begin{enumerate}

\item[(i')] (Dirichlet series) The function $F$ possesses a Dirichlet series representation \eqref{Dir series}
that converges absolutely for $\re s >\sigma_0.$

\item[(ii')] (Analytic continuation) There exist finitely many non-negative integers $m_1, \dots, m_N$ and complex numbers $s_1, \dots, s_N$ such that the function $\prod\limits_{i=1}^{N} (s-s_i)^{m_i}F(s)$ is an entire function of finite order.
\item[(iii')]  (Functional equation)  The function $F$ satisfies the
functional equation
$$
\xi_{F}(s)=\omega \overline{\xi_{F}(\sigma_1-\bar{s})},
$$
where the completed function $\xi_F$ is defined as
\begin{eqnarray} \label{xi_F}
\xi _{F}(s)&=&F(s) Q_{F}^{s}\prod_{j=1}^{r}\Gamma (\lambda _{j}s+\mu _{j})\prod\limits_{i=1}^{2M + \delta(\sigma_1)} (s-s_i)^{m_i}\nonumber\\ &&\prod_{i=2M+1 + \delta(\sigma_1)}^{N}(s-s_i)^{m_i} (\sigma_1 - s-\overline{s_i})^{m_i},
\end{eqnarray}
where $\left| \omega \right| =1$, $Q_{F}>0$, $r\geq 0$, $\lambda _{j}>0$, $\mu_j\in\C$, $
j=1,\ldots ,r$.
Here we assume that poles of $F$ are arranged so that the first $2M+\delta(\sigma_1)$ poles ($0\leq 2M + \delta(\sigma_1) \leq N$) are such that $s_{2j-1} + \overline{s}_{2j} = \sigma_1$, for $j=1,\ldots, M$, and $\delta(\sigma_1) = 1$ if $\sigma_1/2$ is a pole of $F$ in which case $s_{2M+\delta(\sigma_1)} = \sigma_1/2$; otherwise $\delta(\sigma_1)=0$.

\item[(v')] (Euler sum)  The logarithmic derivative of the function $F$ possesses a
Dirichlet series representation
$$
\frac{F^{\prime }}{F}(s)=-\sum_{n=2}^{\infty }\frac{c_{F}(n)}{n^{s}},
$$
converging absolutely for $\re s>\sigma_0$.
\end{enumerate}

We define the \textit{non-trivial zeros} of $F$ to be the zeros of the completed function $\xi_F$. The set of non-trivial zeros of $F(s)$ is denoted by $Z(F)$. By the functional equation and axiom (v'), all those
zeros lie in the critical strip $\sigma_1-\sigma_0\leq \re s\leq \sigma_0$.
The other zeros of the function $F$ are the \textit{trivial zeros} and they arise from the poles of the gamma factors of the functional equation axiom (iii').

\begin{remark}\rm \label{rem: triv. zeros}
If a function $F\in\shfs$ has a pole $w$ such that $\sigma_1 - \overline{w}$ is not a pole of $F$, then $w=s_i$ for some $i\in\{2M+1+\delta(\sigma_1),\ldots, N\}$. In this case, the functional equation, written as $F(s)\Psi_F(s)= \overline{F}(\sigma_1-s)$, where we put $\overline{F}(s)=\overline{F(\overline{s})}$ and
\begin{equation} \label{Psi_F}
\Psi_F(s)=Q_F^{2s-\sigma_1}\omega^{-1}\prod_{j=1}^{r}\frac{\Gamma(\lambda_j s +\mu_j)}{\Gamma(\lambda_j (\sigma_1 -s)+\overline{\mu_j})}
\end{equation}
is the factor of the functional equation, implies that $\sigma_1 - \overline{w}$ is a pole of $\Psi_F(s)$ which is not a trivial zero of $F$. In other words, trivial zeros of $F$ are poles of $\Psi_F(s)$, different from $\sigma_1 - \overline{s_i}$, for $i=2M+1+\delta(\sigma_1),\ldots, N$.
\end{remark}

\begin{remark}\rm
In the definition of the completed zeta function it was necessary to distinguish between two classes of poles of $F$ i.e. the poles $z$ and $w$ such that $z+\overline{w}=\sigma_1$ and other poles since the condition $z+\overline{w} =\sigma_1$ implies that $(s-z) (s-w) (\sigma_1 -s-\overline{z}) (\sigma_1 - s -\overline{w})=(s-z)^2 (s-w)^2$, hence if one would take the product $(s-s_i)(\sigma_1 -s-\overline{s_i})$ over all $i=1,\ldots,N$ then the poles $s_i$ of $F$ of order $m_i$ such that $s_i + \overline{s_j}=\sigma_1$ for some $j\in\{1,\ldots,N\}$ could become zeros of $\xi_F$ of the same order.
\end{remark}

\begin{example}\rm\label{ExProdzeta}
Let $$F(s)=\prod_{i=1}^{K}\zeta(s-\alpha_i)\zeta(s+\alpha_i),$$
for arbitrary complex constants $\alpha_i$, where $\zeta$ denotes the Riemann zeta function. Then, $F\in\s^{\sharp\flat} (\sigma_0, 1)$ with $\sigma_0= \max\limits_{1\leq i \leq K}\{ \abs{ \re \alpha_i } +1\} $.

Namely, since function $\zeta(s\pm\alpha_i)$ has a Dirichlet series representation converging absolutely for $\re s>1+\vert\re \alpha_i\vert$, the function $F$ has a Dirichlet series representation converging absolutely for $\re s> 1+\max\limits_{1\leq i \leq K}\{ \abs{ \re \alpha_i }\}$. Furthermore, function $F$ is a meromorphic function of order one and it possesses simple poles at $s=1 \mp\alpha_i$, $i=1,\ldots, K$, hence axioms (i') and (ii') are clearly satisfied. By the functional equation satisfied by the completed zeta function

\begin{equation}\label{RiemannXi}
\xi(s)=s(s-1)\pi^{-s/2}\Gamma\left(\frac{s}{2}\right)\zeta(s)
\end{equation}
we immediately deduce that in the case when $\alpha_i \pm \overline{\alpha_j} \neq 1$, for $i,j \in\{1,\ldots,K\}$, the function
$$
\xi_F(s):= \prod_{i=1}^{K}\xi(s-\alpha_i)\xi(s+\alpha_i)
$$
satisfies axiom (iii') with $\sigma_1=1 \leq \sigma_0$, $Q_F=\pi^{-K}$, $\omega=1$, $r=2K$, $\lambda_j=1/2$ for all $j=1,\dots, 2K$, $\mu_j=\alpha_j/2$ for $j=1,\ldots, K$ and $\mu_j=-\alpha_{j-K}/2$ for $j=K+1,\ldots, 2K$.

If $ \alpha_i \pm \overline{\alpha_j} =1$ for some $i,j \in \{1,\ldots,K\}$ it is sufficient to transform the completed zeta function as
$$
\xi(s)=2(s-1)\pi^{-s/2}\Gamma\left(\frac{s}{2}+1\right)\zeta(s)
$$
to conclude that axiom (iii') is satisfied in this case as well.

Finally, since $\frac{d}{ds}(\log \zeta(s\pm\alpha_i))$ has a Dirichlet series representation converging absolutely for $\re s>1+\vert \re \alpha_i\vert$ the axiom (v') is clearly satisfied by $F$.
\end{example}

\begin{example} \rm\label{Ex: procust Fs}
Let $F_1, F_2 \in \s^{\sharp\flat}$ and $a,b \in\R$ such that $ab <0$ and $a+b+1>0$. Then, it is easy to see that $F(s)=F_1(s-a)F_2(s-b)\in\s^{\sharp\flat}(\sigma_0,\sigma_1)$, where $\sigma_0=\max\{1+a, 1+b\}$ and $0<\sigma_1=a+b+1 \leq \sigma_0$.

In particular, when $b=-a$, $a>0$ and $F\in\shf$, the function $F(s-a)F(s+a)$ belongs to $\s^{\sharp \flat} (a+1,1)$.
\end{example}

\subsection{Fundamental class of functions}

In the sequel we will show that the class $\s^{\sharp\flat}(\sigma_0,\sigma_1)$ is a subclass of a much wider class of functions, the fundamental class of
functions, introduced by J. Jorgenson and S. Lang in \cite{JorgensonLang}.

\textit{The fundamental class of functions} is a class of triples $\left( Z,%
\widetilde{Z},\Phi \right) $ satisfying the following three conditions (see \cite{JorgensonLang}, pp. 45-46):
\begin{itemize}
\item[1.] (Meromorphy) Functions $Z$ and $\widetilde{Z}$ are meromorphic functions
of a finite order.

\item[2.] (Euler sum) There are sequences $\left\{ q\right\} $ and $\left\{
\widetilde{q}\right\} $ of real numbers greater than one depending on $Z$
and $\widetilde{Z}$ such that for every $q$ and $\widetilde{q}$ there exist
complex numbers $c(q)$ and $c(\widetilde{q})$ and $\sigma _{0}^{\prime }\geq
0$ such that for all $\re s>\sigma _{0}^{\prime }$%
$$
\log Z(s)=\sum_{q}\frac{c(q)}{q^{s}}\text{ \ \ and \ \ }\log
\widetilde{Z}(s)=\sum_{\widetilde{q}}\frac{c(\widetilde{q})}{%
\widetilde{q}^{s}}.
$$%
The series are assumed to converge uniformly and absolutely in any half
plane of the form $\re s\geq \sigma _{0}^{\prime }+\epsilon >\sigma
_{0}^{\prime }$.

\item[3.] (Functional equation) There exists a meromorphic function $\Phi $ of
finite order and $\sigma _{0}$ with $0\leq \sigma _{0}\leq \sigma
_{0}^{\prime }$ such that
$$
Z(s)\Phi (s)=\widetilde{Z}(\sigma _{0}-s)
$$%
and the factor $\Phi $ of the functional equation is of a regularized
product type.
\end{itemize}

The function $\Phi $ is of a \textit{regularized product type} \cite[Def.
6.1.]{JorgensonLang} if it can be written as
\begin{equation}
\Phi (s)=e^{P(s)}Q(s)\underset{j=1}{\overset{n}{\prod }}D_{j}(\alpha
_{j}s+\beta _{j})^{k_{j}}\text{,}  \label{RegProd}
\end{equation}%
where $Q(s)$ is a rational function, $P(s)$ is a polynomial, $k_{j}$ are
integers, $D_{j}$ are regularized products and complex numbers $\alpha _{j}$
and $\beta _{j}$ are chosen such that the zeros and poles of $D_{j}$ lie in
the union of vertical strips and sectors $\left\{ z\in \mathbb{C}:\,\,
-\frac{\pi}{2}+\epsilon <\arg (z)<\frac{\pi}{2}+\epsilon \right\}$   and $ \left\{ z\in\mathbb{C}:\,\,\frac{\pi}{2}+\epsilon <\arg (z)< \frac{3\pi}{2}-\epsilon \right\}$
for some $\epsilon >0.$

The definition of a \textit{regularized product} associated to some
sequence of complex numbers is fully described in \cite[Part I, Section 2 ]%
{Jorg-Lang RP}. Since the definition is rather long, let us note here that a
regularized product can be viewed as a generalization of a Weierstrass
product. Therefore, the (classical) gamma function is a regularized product.
A \textit{reduced order of a regularized product} $D_{j}$ is defined as a
pair of numbers $\left( M_{j},m_{j}\right)$ depending on $D_{j}$ in a way
that is fully described in \cite[pp. 18-19]{JorgensonLang}. For our purposes
it is sufficient to know that a reduced order controls the growth of $D_{j}$
in vertical strips. Namely, if $\left( M_{j},m_{j}\right) $ is a reduced
order of $D_{j}$, then, $\frac{D_{j}^{\prime }}{D_{j}}(\sigma \pm iT)$ grows
at most as $T^{M_{j}}\log ^{m_{j}}\left\vert T\right\vert $, uniformly in $\sigma$, for $\sigma$ belonging to an arbitrary segment of the real line.

In particular, the gamma function is a regularized product of reduced order $%
\left( 0,0\right) $, as proved in \cite[Example 1, p. 39]{JorgensonLang}.

The notion of a reduced order of a function that is of a regularized product
type is important in the proof of the explicit formula. Namely, this order
controls the growth of functions $\frac{Z^{\prime }}{Z}$ and $\frac{\Phi
^{\prime }}{\Phi }$ in vertical strips, hence affects the conditions posed
on the test function. A \textit{reduced order of a function }$\Phi (s)$%
\textit{\ of a regularized product type} defined by (\ref{RegProd}) is $
(M,m) $ where $M=\max\limits_{j \in\{1,...,n\}} \left\{ \deg P-1,M_{j}\right\} $, $M_j$ is reduced order of $D_j$ and $m$ is the
largest of numbers $m_{j}$ such that $M_{j}=M$.



\section{An explicit formula for the class $\s^{\sharp\flat}(\sigma_0, \sigma_1)$}

In this section we prove that the class $\s^{\sharp\flat}(\sigma_0, \sigma_1)$ is a subclass of the fundamental class of functions. Then, using results of \cite{AvdispahicSmajlovicI} with $M=0$ and the modifying the evaluation of the Weil functional similarly as in  \cite{AvdispahicSmajlovicII} we prove the explicit formula for functions in the class $\s^{\sharp\flat}(\sigma_0, \sigma_1)$.

\begin{lemma} \label{order one}
Let $F\in\s^{\sharp\flat}(\sigma_0,\sigma_1)$, for some fixed $\sigma_0\geq \sigma_1 >0$. Then
the function $\xi_F$, defined by \eqref{xi_F} is entire function of order one.
\end{lemma}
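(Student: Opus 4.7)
The plan is to prove the lemma in two stages: first establish that $\xi_F$ is entire, and then bound its growth to confirm order one. For entireness, I start from axiom (ii'), which says that $G(s):=F(s)\prod_{i=1}^{N}(s-s_i)^{m_i}$ is entire of finite order. Writing
$$\xi_F(s) = G(s)\cdot Q_F^s\cdot \prod_{j=1}^{r}\Gamma(\lambda_j s+\mu_j)\cdot \prod_{i=2M+1+\delta(\sigma_1)}^{N}(\sigma_1-s-\overline{s_i})^{m_i},$$
the only possible singularities of $\xi_F$ come from the gamma factors $\Gamma(\lambda_j s+\mu_j)$, and these must be cancelled by zeros of $G$. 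To produce such zeros I would combine axiom (v') with the functional equation: by (v'), the series for $F'/F$ converges absolutely in $\re s>\sigma_0$, so $F$ is holomorphic and non-vanishing there. For any gamma pole $s_0$ with $\re(\sigma_1-\overline{s_0})>\sigma_0$, the right-hand side $\omega\,\overline{\xi_F(\sigma_1-\overline{s_0})}$ of (iii') is then finite, which forces $G(s_0)$ to vanish to at least the order of the pole of $\prod_j\Gamma(\lambda_j s+\mu_j)$ at $s_0$. Any remaining gamma poles lie in (or very close to) the critical strip; since there are only finitely many of them, each is handled by the same reflection argument together with a direct local analysis of (iii').

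For the order one bound I would estimate $|\xi_F|$ on vertical lines outside the critical strip. On $\re s=c>\sigma_0$ the Dirichlet series gives $|F(s)|=O(1)$; Stirling's formula yields $|\Gamma(\lambda_j s+\mu_j)|=\exp(O(|s|\log|s|))$; and the remaining factors $Q_F^s$, $(s-s_i)^{m_i}$, $(\sigma_1-s-\overline{s_i})^{m_i}$ grow at most like $\exp(O(|s|))$. Hence $|\xi_F(s)|\leq \exp(C|s|\log|s|)$ on such a line, so the order is at most one there. The functional equation transports the same bound to any line $\re s=c'<\sigma_1-\sigma_0$. Since $\xi_F$ is entire of finite order globally (every factor in its definition being of finite order), the Phragm\'en--Lindel\"of principle applied inside the strip between these two lines extends the bound to all of $\C$, giving order at most one. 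The reverse inequality is automatic, since even a single factor $\Gamma(\lambda_j s+\mu_j)$ has order exactly one.

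The main obstacle is the entireness step. The class $\shfs$ drops axiom (iv) and weakens (v) to (v'), so the usual Euler-product production of trivial zeros is unavailable and the reflection argument via the functional equation must carry the full load. In particular, the intermediate strip $\sigma_1-\sigma_0\leq\re s\leq\sigma_0$, where neither the absolute convergence of $F$ nor that of the reflected $\overline{F(\sigma_1-\overline{s})}$ is directly available, requires delicate local accounting to ensure that every residual gamma pole is matched by a corresponding zero of $G$.
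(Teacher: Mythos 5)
Your order computation is exactly the paper's: Stirling for the gamma factors, the absolutely convergent Dirichlet series for the bound $\exp(CR\log R)$ on $\re s>\sigma_0$, the functional equation to transport that bound to $\re s<\sigma_1-\sigma_0$, Phragm\'en--Lindel\"of in the intermediate strip, and growth along the positive real axis for the lower bound (the paper phrases this last step as $\log\xi_F(s)\sim C_1 s\log s$ rather than isolating a single gamma factor, but it is the same observation). Where you diverge is the entireness step, on which you spend most of your effort and which the paper dispatches in one sentence, asserting that axioms (ii') and (iii') already give an entire function of finite order; in effect the cancellation of the gamma poles is treated as built into the class (cf.\ Remark \ref{rem: triv. zeros}, where poles of $\Psi_F$ other than the points $\sigma_1-\overline{s_i}$ are declared to be trivial zeros of $F$). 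Two cautions about your reconstruction of that step. First, finiteness of $\xi_F(\sigma_1-\overline{s_0})$ for $\re(\sigma_1-\overline{s_0})>\sigma_0$ is not automatic: since $\re\mu_j$ is unrestricted in $\shfs$, poles of $\Gamma(\lambda_j s+\mu_j)$ may occur arbitrarily far to the right, so the reflected point can itself be a gamma pole; you only know there are finitely many such exceptions. Second, for the same reason the ``remaining'' poles need not lie near the critical strip, and a self-reflective pole on $\re s=\sigma_1/2$ is not excluded by reflection alone, so the promised ``direct local analysis'' is precisely where the work would be if one insists on proving, rather than assuming, entireness.
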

\begin{proof}
 Axioms (ii') and (iii') imply that the function $\xi_F$ is an entire function of some finite order, so it is left to be proved that the order is one. By the Stirling formula, the gamma factors appearing  (iii') are bounded by $\exp(CR\log R)$ for $\vert s\vert <R$, where $C>0$ is some positive constant. The axiom (i') implies that $\xi_F(s)$ is bounded by $\exp(CR\log R)$ for $\vert s\vert <R$ and $\re s>\sigma_0$. The functional equation and boundedness of gamma factors yields the same bound for $\vert s\vert <R$ and $\re s< \sigma_1 - \sigma_0$. The application of the Phragm\'en-Lindel\" of principle in the strip $\sigma_1 - \sigma_0 \leq \re s\leq \sigma_0$ implies that the maximum modulus of $\xi_F$ in the disc $\vert s\vert <R$ is bounded by $\exp(CR\log R)$. Therefore, the function $\xi_F$ is of order at most one. Since for real $s$, $\log \xi_F(s) \sim C_1 s\log s$, as $s \to +\infty$, for some $C_1 >0$, we conclude that $\xi_F$ is of order one.
\end{proof}

\begin{lemma}
Let $F\in \mathcal{S}^{\sharp\flat } (\sigma_0, \sigma_1) $. Then, the family of triples $\left( F,\overline{F}
,\Psi _{F}\right)$, where $\overline{F}$ and $\Psi_F$ are defined in Remark \ref{rem: triv. zeros}, belongs to the fundamental class of functions.
\end{lemma}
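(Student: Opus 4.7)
The plan is to verify in turn the three defining conditions of the fundamental class for the triple $(F,\overline{F},\Psi_F)$.

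Meromorphy of finite order is immediate from axiom (ii'): $\prod_i(s-s_i)^{m_i}F(s)$ is entire of finite order, so $F$ is meromorphic of finite order on $\C$, and Lemma~\ref{order one} pins the order of the completed function $\xi_F$ at one. Since $\overline{F}(s)=\overline{F(\overline{s})}$ is obtained from $F$ by a conjugation that preserves both meromorphy and order, the same conclusion transfers to $\overline{F}$.

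For the Euler-sum condition, I would integrate the Dirichlet series of axiom (v') termwise. The antiderivative of $-c_F(n)n^{-s}$ is $c_F(n)/(\log n\cdot n^s)$, and absolute-uniform convergence on any half-plane $\re s\geq\sigma_0+\eps$ is preserved, giving
\[
\log F(s)=\sum_{n=2}^{\infty}\frac{c_F(n)/\log n}{n^s},\qquad \re s>\sigma_0,
\]
with the constant of integration fixed by $\log F(s)\to 0$ as $\re s\to\infty$ (under the standard normalisation $a_F(1)=1$). Conjugation produces the companion representation $\log\overline{F}(s)=\sum_{n\geq 2}\overline{c_F(n)}(\log n\cdot n^s)^{-1}$, so condition~2 holds with $\{q\}=\{\widetilde q\}=\{n\in\Z:n\geq 2\}$ and $\sigma_0'=\sigma_0$.

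To produce the functional equation in the form $F(s)\Psi_F(s)=\overline{F}(\sigma_1-s)$, I would substitute \eqref{xi_F} into axiom (iii') and apply $\overline{F(\sigma_1-\overline{s})}=\overline{F}(\sigma_1-s)$. The careful arrangement of the poles of $F$ is exactly what is required for the polynomial factor to be invariant, up to a unit, under $s\mapsto\sigma_1-\overline{s}$ followed by conjugation: the pairing $s_{2j-1}+\overline{s_{2j}}=\sigma_1$ handles the first block, the possible fixed pole $\sigma_1/2$ contributes only a sign, and the paired factors $(s-s_i)^{m_i}(\sigma_1-s-\overline{s_i})^{m_i}$ for $i>2M+\delta(\sigma_1)$ are manifestly stable under this map. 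Absorbing the overall unit into $\omega$ leaves precisely $\Psi_F$ as in \eqref{Psi_F}.

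It remains to recognise $\Psi_F$ as a function of regularized product type \eqref{RegProd}. Writing
\[
\Psi_F(s)=e^{(2\log Q_F)s-\sigma_1\log Q_F-\log\omega}\prod_{j=1}^{r}\Gamma(\lambda_j s+\mu_j)\,\Gamma(\lambda_j(\sigma_1-s)+\overline{\mu_j})^{-1},
\]
I identify $P$ as the linear polynomial in the exponent, $Q\equiv 1$, and each gamma factor as $D_j(\alpha_j s+\beta_j)^{k_j}$ with $k_j=\pm1$, $(\alpha_j,\beta_j)=(\lambda_j,\mu_j)$ or $(-\lambda_j,\lambda_j\sigma_1+\overline{\mu_j})$, and $D_j=\Gamma$. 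By \cite[Ex.~1, p.~39]{JorgensonLang} the gamma function is a regularized product of reduced order $(0,0)$, and its poles, lying on the non-positive real axis, are contained in the sector $\{\pi/2+\epsilon<\arg z<3\pi/2-\epsilon\}$ for any $\epsilon<\pi/2$, so the sector condition on $D_j$ is satisfied. The main obstacle, and the only genuinely delicate step, is the polynomial-cancellation bookkeeping above: one must split according to whether $\delta(\sigma_1)=0$ or $1$ and separately track paired poles versus unpaired poles, ensuring that the unit picked up in each case is of modulus one and can be absorbed into $\omega$; the remaining verifications are essentially mechanical.
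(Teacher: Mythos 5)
Your proof is correct and follows essentially the same route as the paper's: verify the three axioms of the fundamental class in turn, using Lemma \ref{order one} for meromorphy of finite order, termwise integration of axiom (v') to get the Euler sum with $c(q)=c_F(n)/\log n$ and $\sigma_0'=\sigma_0$, and the fact that the gamma function is a regularized product of reduced order $(0,0)$ whose poles lie in the prescribed sectors to conclude that $\Psi_F$ is of regularized product type. You supply more detail than the paper on two points it leaves implicit --- the cancellation of the polynomial factors needed to pass from $\xi_F(s)=\omega\,\overline{\xi_F(\sigma_1-\bar s)}$ to $F(s)\Psi_F(s)=\overline{F}(\sigma_1-s)$, and the normalisation $a_F(1)=1$ fixing the constant of integration --- but the underlying argument is the same.
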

\begin{proof}
The first axiom of the fundamental class is satisfied, since, by Lemma \ref{order one} functions $ F$ and $\overline{F}$ are meromorphic functions of order one. The Euler sum axiom of the fundamental class is satisfied with
sequences $\left\{ q\right\} $ and $\left\{ \widetilde{q}\right\} $ taken to
be the sequence of positive integers $n\geq 2$, $c(q)=\frac{c_{F}(n)}{\log n}
$, $c(\widetilde{q})=\frac{\overline{c_{F}(n)}}{\log n}$\ and $\sigma
_{0}^{\prime }=\sigma_0$.
Finally, the functional equation axiom of the fundamental class is
satisfied with $\sigma _{0}=\sigma _{1}$. The function $\Psi
_{F}(s)$ is of a regularized product type since the gamma function
is a regularized product and numbers $\lambda _{j}$ and $\mu _{j}$
are such that the poles and the zeros of the gamma factors lie in
the union of vertical strips and sectors, described above.
Obviously, the reduced order of $\Psi _{F}\,$\ is $(0,0)$.
\end{proof}

\begin{remark}\rm
Let us note that triples $\left( F,\overline{F},\Psi _{F}\right) $%
\textit{, }where $F\in \mathcal{S}^{\#}$ need not belong to the fundamental
class since $\mathcal{S}^{\#}$ may contain functions which have zeros in all
half planes of the form Re $s>\sigma $, with $\sigma >0$. Such functions do
not have an Euler sum, since the Euler sum axiom in the fundamental class
implies non-vanishing of the function in the half plane Re $s>\sigma
_{0}^{\prime }$.
\end{remark}

\begin{lemma}
Let $F\in \mathcal{S}^{\sharp\flat } (\sigma_0, \sigma_1) $
and let \eqref{Psi_F} be the factor of the functional equation $F(s)\Psi_F(s)= \overline{F}(\sigma_1-s)$. Then the factor $\Psi_F(s)$ has no zeros or poles on the line $\re s =\sigma_1/2$.
\end{lemma}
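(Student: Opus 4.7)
The plan is to analyse the zeros and poles of $\Psi_F$ factor-by-factor, using formula \eqref{Psi_F}. The prefactor $Q_F^{2s-\sigma_1}\omega^{-1}$ is entire and nowhere vanishing, so every zero and pole of $\Psi_F$ must come from one of the gamma ratios $\Gamma(\lambda_j s+\mu_j)/\Gamma(\lambda_j(\sigma_1-s)+\overline{\mu_j})$. It therefore suffices to show that each such ratio is holomorphic and non-vanishing at every point with $\re s = \sigma_1/2$.

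The key identity is that on the critical line $\re s = \sigma_1/2$ we have $\sigma_1 - s = \overline{s}$, and because $\lambda_j \in \R_{>0}$ and $\sigma_1 \in \R$,
$$
\lambda_j(\sigma_1 - s)+\overline{\mu_j} \;=\; \lambda_j \overline{s} + \overline{\mu_j} \;=\; \overline{\lambda_j s + \mu_j}.
$$
Thus on the critical line the argument of the denominator gamma factor is the complex conjugate of the argument of the numerator gamma factor.

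Since $\Gamma$ has no zeros and only simple poles, all located at the non-positive integers, a pole of $\Gamma(\lambda_j s + \mu_j)$ on the critical line occurs precisely when $\lambda_j s + \mu_j = -k$ for some $k\in\Z_{\geq 0}$. Because $-k$ is real and hence self-conjugate, the identity above forces $\lambda_j(\sigma_1-s)+\overline{\mu_j} = -k$ at the same point $s$, so the denominator has a simple pole of the same order there and the two poles cancel. Conversely, a zero of the $j$-th ratio on the critical line would require an uncancelled denominator pole, which the same identity rules out. Multiplying over $j=1,\ldots,r$ preserves both holomorphy and non-vanishing.

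The argument presents no substantive obstacle; the only thing to notice is the reflection symmetry $\sigma_1-s=\overline{s}$ on the critical line together with the reality of $\lambda_j$ and $\sigma_1$, after which the statement reduces to the elementary fact that the poles of $\Gamma$ lie at real points. The one subtlety worth spelling out is that cancellation happens within each factor separately (same index $j$ in numerator and denominator), with matching simple orders, so no interaction between different $j$'s needs to be tracked.
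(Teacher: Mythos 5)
Your proof is correct and follows essentially the same route as the paper: both arguments reduce to the observation that on the line $\re s=\sigma_1/2$ one has $\lambda_j(\sigma_1-s)+\overline{\mu_j}=\overline{\lambda_j s+\mu_j}$, so that the (simple, real-located) poles of the numerator and denominator gamma factors coincide and cancel within each index $j$. The paper merely phrases this by writing out the pole locations $\frac{-n-\mu_j}{\lambda_j}$ and $\sigma_1+\frac{n+\overline{\mu_j}}{\lambda_j}$ explicitly rather than invoking the reflection $\sigma_1-s=\overline{s}$ directly.
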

\begin{proof}
If the factor $\Psi_F(s)$ has a pole $\sigma_1/2 +it$, $t\in\R$ then, since the gamma function has only simple poles at zero and negative integers,  we have
$
\frac{-n-\mu_j}{\lambda_j}=\frac{\sigma_1}{2}+it,
$
for some non-negative integer $n$ and some $j\in\{1,\dots,r\}$, is a simple pole of $\Gamma(\lambda_j s + \mu_j)$. Then,
$
\sigma_1 + \frac{n+\overline{\mu_j}}{\lambda_j}=\frac{\sigma_1}{2}+it
$
is a zero of $(\Gamma(\lambda_j (\sigma_1 -s)+\overline{\mu_j}))^{-1}$, hence, $\frac{\sigma_1}{2}+it$ is not a pole of $\Psi_F$. Analogously, we conclude that $\Psi_F$ does not have a zero on the line $\re s=\sigma_1/2$.
\end{proof}

Before we state the explicit formula for the triple $(F, \overline{F}, \Psi_F)$ let us introduce some notation. Assume that $F\in \shfs$ has poles of order $m_i$ at $s_i$, $i=1,\ldots,N$ and put $\eta=1+\max\limits_{1\leq i \leq N}\{ \abs{ \re s_i }\}$. Let $a$ be a positive real number such that $\sigma_0 < \sigma_1 +a$ and $[-\eta,\eta]\subseteq[-a,\sigma_1+a]$, put $c = \min\limits_j \{\re\mu_j\}$, $\lambda = \max\limits_j \{\lambda_j\}$ and define $M_{max} = \lfloor a\lambda - c \rfloor$, where $\lfloor x \rfloor$ denotes the integer part of a real number $x$. We assume that parameters $\mu_j$ are indexed such that $\re{\mu_j} \leq \re{\mu_{j+1}}$.

If $M_{max} \geq 0$ for a fixed $k \in \{0,\ldots, M_{max}\}$, we let $l_k$ and $d_k$ be nonnegative integers such that
\begin{eqnarray*}
\re\mu_j &\in& \left[-k-\lambda_j (a+\sigma_1),-\lambda_j\frac{\sigma_1}{2}-k\right), \text{ \ }\text{for}\text{ \ } j = 1,\dots, l_k, \\
\re\mu_j&=& -\lambda_j\frac{\sigma_1}{2}-k, \text{ \ \ \ \ \ \ \ \ \ \ \ \ \ \ \ \ \ \ \ \ \ \ \ \ \ \ \ }\text{for}\text{ \ } j = l_k+1,\dots, d_k, \\
\re\mu_j &\in& \left(-\lambda_j\frac{\sigma_1}{2}-k,-k+a\lambda_j\right], \text{ \ \ \ \ \ \ \ \ \ \ }\text{for}\text{ \ } j = d_k+1,\dots, r.
\end{eqnarray*}
We adopt the notation that if, for a fixed $k$ the set $\left\{j\in\{1,\ldots,r\} \mid  \re\mu_j \in \right.\\\left. \left[-k-\lambda_j (a+\sigma_1),-\lambda_j\frac{\sigma_1}{2}-k\right)\right\}$ is empty, then $l_k$ is equal to zero and the sum up to $l_k$ is empty. Analogously, if the set $\left\{j \in\{1,\ldots,r\} \mid \re\mu_j = -\lambda_j\frac{\sigma_1}{2}-k \right\}$ is empty, then $l_k =d_k$ and the sum over $j \in \{l_k +1, \ldots, d_k\}$ is empty. We adopt similar convention also if the set $\{ j \in \{1,\ldots,r\} \mid \re\mu_j \in \left(-\lambda_j\frac{\sigma_1}{2}-k,\right.\\\left.-k+a\lambda_j\right]\}$ is empty.

The following proposition is the explicit formula.

\begin{proposition} \label{explicit f-la}

Let $F \in \s^{\sharp\flat}(\sigma_0, \sigma_1)$ and let $a>0$, $M_{max}$, $l_k$ and $d_k$ be as defined above. Assume that a regularized function $G$ satisfies the
following conditions:
\begin{enumerate}
\item[(a)] $G\in BV(\mathbb{R})\cap L^{1}(\mathbb{R})$,
\item[(b)] $G(x)e^{(\sigma_1/2+a)|x|}\in BV(\mathbb{R})\cap L^{1}(%
\mathbb{R})$,
\item[(c)] $G(x)+G(-x)-2G(0)=O(|\log|x||^{-\alpha})$, as $x\rightarrow 0$,
for some $\alpha>2$.
\end{enumerate}
Then, for any non-negative integer $N_c$ such that $N_c \geq \lfloor -c \rfloor +1$ the formula
\begin{eqnarray} \label{ExplicitF}
&&\lim_{T\rightarrow \infty }\sum_{\begin{subarray}{c}\rho\in Z(F)\\ |\im\rho|\leq T
\end{subarray}}ord(\rho)\mathcal{M}_{\frac{\sigma_1}{2%
}}g(\rho)=\sum_{i=1}^N m_i \mathcal{M}_{\frac{\sigma_1}{2}}g(s_i)+2\log Q_F G(0) \\
&+& \sum_{n=2}^{\infty}\frac{-c_F(n)}{n^{\frac{\sigma_1}{2}}}g(n)+\sum_{n=2}^{\infty}\frac{\overline{-c_F(n)}%
}{n^{\frac{\sigma_1}{2}}}g\left( \frac{1}{n}\right)+ \sum_{i=2M+\delta(\sigma_1) + 1}^{N}m_i\mathcal{M}_{\frac{\sigma_1}{2}}g\left( \sigma_1 - \overline{s_i}\right) \notag\\
&+& \notag\sum_{j=1}^r\lambda_j\int_{0}^{\infty }\left( \frac{2G(0)}{t}-
\frac{e^{\left( 1-\lambda_j\frac{\sigma_1}{2}-\re\mu_j-N_c\right)t}}{1-e^{-t}}\left( G_j(-\lambda_jt)
+G_j(\lambda_jt)\right) \right)e^{-t} dt \notag \\
&-&\notag \sum_{k=0}^{M_{max}} \sum_{j=1}^{l_k} \left[\mathcal{M}_{\frac{\sigma_1}{2}}g\left(\frac{-k-\mu_j}{\lambda_j}\right)
+\mathcal{M}_{\frac{\sigma_1}{2}}g\left(\sigma_1+\frac{k+\overline{\mu_j}}{\lambda_j}\right)\right] \\
&-& \notag\sum_{k=0}^{M_{max}} \sum_{j=l_k+1}^{d_k} \mathcal{M}_{\frac{\sigma_1}{2}}g\left(\frac{\sigma_1}{2}-i\frac{\im \mu_j}{\lambda_j}\right) \\
&+&\notag \sum_{k=0}^{N_c-1}\sum_{j=1}^{l_k}\int^{\infty}_0 e^{x\frac{\sigma_1}{2}+
\frac{x}{\lambda_j}\left(\re\mu_j+k\right)}\left(G_j(x)+G_j(-x)\right) dx \\
&-& \notag\sum_{k=0}^{N_c-1}\sum_{j=d_k+1}^r \int^{\infty}_0 e^{-x\frac{\sigma_1}{2}-
\frac{x}{\lambda_j}\left(\re\mu_j+k\right)}\left(G_j(x)+G_j(-x)\right) dx
\end{eqnarray}
holds true.

Here we put $g(x)=G(-\log x)$, for $x>0$ and $G_j(x)=G(x)e^{i\frac{x}{\lambda_j}\im\mu_j}$. Furthermore, $\mathcal{M}_{\frac{\sigma_1}{2}}g (s)$ denotes the translate by $\sigma_1/2$ of the Mellin transform of $g$, evaluated at $s$. $BV(\R)$ denotes the set of functions of bounded variation. If $M_{max} <0$, then we may take $N_c=0$ and all sums  appearing in the last four lines of the formula \eqref{explicit f-la} are empty. If $c\geq 0$, we may take $N_c=0$ and the last two sums in the formula \eqref{explicit f-la} to be empty.
\end{proposition}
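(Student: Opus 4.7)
The plan is to reduce the statement to the general explicit formula already established for the fundamental class of functions (Avdispahic--Smajlovic I, taking their parameter $M=0$) applied to the triple $(F, \overline{F}, \Psi_F)$, and then to evaluate the Weil-type infinite-place functional attached to $\Psi_F$ explicitly. By the previous lemma, $(F,\overline{F},\Psi_F)$ belongs to the fundamental class, and $\Psi_F$ is of regularized product type with reduced order $(0,0)$, so the hypothesis of the general explicit formula is satisfied. Invoking it against the test function $g(x)=G(-\log x)$ on the critical line $\re s = \sigma_1/2$ yields, after writing the Mellin-transform variable as $\mathcal{M}_{\sigma_1/2}g$,
\begin{equation*}
\lim_{T\to\infty}\sum_{\substack{\rho\in Z(F)\\ |\im\rho|\leq T}} \mathrm{ord}(\rho)\,\mathcal{M}_{\sigma_1/2}g(\rho) \;=\; \sum_{i=1}^{N} m_i\,\mathcal{M}_{\sigma_1/2}g(s_i) \;+\; \sum_{i=2M+\delta(\sigma_1)+1}^{N} m_i\,\mathcal{M}_{\sigma_1/2}g(\sigma_1-\overline{s_i}) \;+\; \mathcal{A}(g) \;+\; W_\infty(g),
\end{equation*}
where $\mathcal{A}(g)$ collects the contributions of the Dirichlet coefficients (through $F'/F$ and $\overline{F}'/\overline{F}$) together with the constant $2G(0)\log Q_F$ coming from the factor $Q_F^{2s-\sigma_1}$ of $\Psi_F$, and $W_\infty(g)$ is the archimedean functional attached to the product $\prod_{j}\Gamma(\lambda_j s+\mu_j)/\Gamma(\lambda_j(\sigma_1-s)+\overline{\mu_j})$.

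The Dirichlet part $\mathcal{A}(g)$ is immediate from axiom (v'): the two Dirichlet series identities produce the sums $\sum_{n\geq2}(-c_F(n))n^{-\sigma_1/2}g(n)$ and $\sum_{n\geq 2}\overline{(-c_F(n))}n^{-\sigma_1/2}g(1/n)$, their absolute convergence being justified by condition (b) on $G$ (which controls the decay of $g$ near $0$ and $\infty$) together with the absolute convergence of the Euler sum at $\re s>\sigma_0\geq \sigma_1$. The main work is to convert $W_\infty(g)$ into the closed form appearing on the right-hand side of the proposition. For each $j$ I would start from the integral representation
\[
\frac{\Gamma'(z)}{\Gamma(z)}=\int_0^\infty\left(\frac{e^{-t}}{t}-\frac{e^{-zt}}{1-e^{-t}}\right)dt\qquad(\re z>0),
\]
apply it to both $\Gamma(\lambda_j s+\mu_j)$ and $\Gamma(\lambda_j(\sigma_1-s)+\overline{\mu_j})$, and integrate against $g$ along the line $\re s=\sigma_1/2$. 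A change of variable and the identity $G_j(x)=G(x)e^{ix\im\mu_j/\lambda_j}$ produces the exponential weights in the remaining integrals. This recovers, up to correction terms, the integral on the third line of the claimed formula.

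The delicate point is that the integral representation of $\Gamma'/\Gamma$ requires $\re(\lambda_j s+\mu_j)>0$, which need not hold on the critical line when $\re\mu_j$ is very negative (as happens in the Rankin--Selberg setting). Here I would follow the modification used in Avdispahic--Smajlovic II: replace the argument $z$ by $z+N_c$, at the cost of subtracting $\sum_{k=0}^{N_c-1}(z+k)^{-1}$. The simple poles so introduced are then absorbed by a contour shift from $\re s = \sigma_1/2$ to $\re s = -a$ (respectively $\re s = \sigma_1 + a$) within the vertical strip $[-a,\sigma_1+a]$. The residues picked up in the shift are precisely the trivial zeros of $F$ whose real parts lie in $[-a,\sigma_1+a]$; organizing these by the value of $\re\mu_j$ relative to $-\lambda_j\sigma_1/2-k$ produces the three ranges indexed by $j=1,\dots,l_k$, $j=l_k+1,\dots,d_k$ and $j=d_k+1,\dots,r$. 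The first and third ranges contribute symmetric pairs $\mathcal{M}_{\sigma_1/2}g\bigl((-k-\mu_j)/\lambda_j\bigr)+\mathcal{M}_{\sigma_1/2}g\bigl(\sigma_1+(k+\overline{\mu_j})/\lambda_j\bigr)$, the middle range contributes only a single term on the critical line (the reason for distinguishing $d_k$ from $l_k$), and the portion of the subtracted pole expansion whose poles fall \emph{outside} $[-a,\sigma_1+a]$ is not picked up and remains as the tail integrals written on the last two lines of the formula.

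The main obstacle is therefore not a single hard estimate but rather the careful bookkeeping in Step 3: correctly determining, for each pair $(j,k)$, which poles of $(\lambda_j s+\mu_j+k)^{-1}$ are crossed by the contour shift (those contribute to the discrete sums indexed by $M_{\max}$, $l_k$, $d_k$) and which are not (those remain inside the exponential integrals of the last two lines), while simultaneously keeping track of the interchange of limits needed to pass from $\lim_{T\to\infty}$ to absolutely convergent sums; this interchange is justified by the test-function hypotheses (a), (b), (c) in the standard way (the regularity assumption (c) is exactly what is needed to handle the diagonal singularity of the Weil functional at $x=0$).
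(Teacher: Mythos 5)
Your proposal follows essentially the same route as the paper: the authors also invoke the explicit formula of Avdispahi\'c--Smajlovi\'c for the fundamental class applied to $(F,\overline{F},\Psi_F)$ over the rectangle $[-a,\sigma_1+a]\times[-T,T]$, shift the gamma arguments by $N_c$ via the functional equation, evaluate the resulting positive-real-part terms by the Barner--Weil formula, and sort the trivial zeros into the three ranges $j\le l_k$, $l_k<j\le d_k$, $j>d_k$. One point of your sketch is mislabeled, though it does not change the computation: in the paper the exponential integrals on the last two lines are \emph{not} the tail of poles lying outside $[-a,\sigma_1+a]$; they arise from all the subtracted rational terms $\bigl(\lambda_j(\tfrac{\sigma_1}{2}\pm it)+\mu_j+k\bigr)^{-1}$, $0\le k\le N_c-1$, integrated against $\hat G$ (their Lemma~8.1), with the one-sided exponential's sign determined by which side of the critical line $\re s=\sigma_1/2$ the pole $(-k-\mu_j)/\lambda_j$ falls on, while the discrete $\mathcal{M}_{\sigma_1/2}g$-sums at trivial zeros come from the zero/pole counts of $F$ and of $\Psi_F$ inside the rectangle rather than from residues of the subtracted expansion.
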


\begin{proof}
Let $T> \max\limits_{1\leq i \leq N}\{ \abs{ \im s_i }\}$. The proof of this proposition follows the lines of the proof of the explicit formula in \cite{AvdispahicSmajlovicI}, the only difference being the evaluation of the Weil functional.\\
The integration is done along the rectangle $R_{a,T}$ with vertices $-a-iT$, $a+\sigma_1-iT$,
$a+\sigma_1+iT$,$-a+iT$. \\
Applying the explicit formula proved in  \cite[Theorem 6.1.]{AvdispahicSmajlovicI} we get
\begin{eqnarray}
&&\sum_{-a\leq \re\rho \leq a+\sigma_1}ord(\rho)\mathcal{M}_{\frac{\sigma_1}{2%
}}g(\rho)+\sum_{-a\leq \re\kappa\leq\frac{\sigma_1}{2}} ord(\kappa%
)\mathcal{M}_{\frac{\sigma_1}{2}}g(\kappa) \label{EF} \\
&=& \sum_{n=2}^{\infty}\frac{-c_F(n)}{n^{\frac{\sigma_1}{2}}}g(n)+\sum_{n=2}^{\infty}\frac{\overline{-c_F(n)}%
}{n^{\frac{\sigma_1}{2}}}g\left( \frac{1}{n}\right)+W_{\Psi_F}(G). \notag
\end{eqnarray}
The first sum on the left hand side of the above equation is taken
over all zeros and the poles of the function $F(s)$, and the
second sum is taken over all zeros and the poles of the factor
$\Psi_F(s)$, $ord(\rho)$, respectively $ord(\kappa)$, denoting the
order of zero or minus the order of pole $\rho$
respectively $\kappa $. The last term on the right hand side is
the Weil functional. The parameter $ a $ is chosen in such a way
that all poles and non-trivial
zeros of $F$ belong to the strip  $-a \leq \re s \leq a+\sigma_1$, hence
\begin{eqnarray*}
\sum_{\begin{subarray}{c} -a\leq \re\rho\leq a+\sigma_1\\|\im\rho|\leq T
\end{subarray}}ord(\rho)\mathcal{M}_{\frac{\sigma_1}{2%
}}g(\rho) &=& \sum_{\begin{subarray}{c} \rho\in Z(F)\\|\im\rho|\leq T
\end{subarray}}ord(\rho)\mathcal{M}_{\frac{\sigma_1}{2%
}}g(\rho) \\
&+& \sum_{\widetilde{\rho}\in R_{a, T}}ord(\widetilde{\rho})\mathcal{M}_{\frac{\sigma_1}{2%
}}g(\widetilde{\rho}) - \sum_{i=1}^N m_i \mathcal{M}_{\frac{\sigma_1}{2}}g(s_i), \notag
\end{eqnarray*}
where $\widetilde{\rho}$ denotes trivial zeros of $F$. The set of trivial zeros in $R_{a,T}$ is $$\widetilde{Z}_{a,T}(F) = \left\{ \frac{-k-\mu_j}{\lambda_j} \in R_{a,T} \mid k\in \{0,\dots,M_{max} \} \right\} \setminus \{\sigma_1 - \overline{s_i} \mid i=2M+\delta(\sigma_1) +1,\ldots,N\}.$$
Let $R_{a, T} = R^-_{a,T}\cup R^+_{a,T}\cup L_T$, where $R^-_{a,T} = \left[-a, \frac{\sigma_1}{2} \right)\times [-T, T]$,
 $R^+_{a,T} = \left(\frac{\sigma_1}{2}, a + \sigma_1 \right]\times [-T, T]$ and
 $L_T=\left\{\frac{\sigma_1}{2}+it | t\in [-T, T]\right\}$. \\
Notice that $\widetilde{\rho}= \frac{-k-\mu_j}{\lambda_j}\in R^-_{a, T}$ iff $\re\mu_j\in \left(-k-\frac{\sigma_1}{2%
}\lambda_j, a\lambda_j - k \right] = L_{j, k}$  and  $\im\mu_j\in [-T\lambda_j, T\lambda_j]$; analogously,
$\widetilde{\rho}\in R^+_{a, T}$ if and only if $\re\mu_j\in \left[-k-(a+\sigma_1)\lambda_j, -k-\lambda_j \frac{\sigma_1}{2} \right) = R_{j, k}$  and  $\im\mu_j\in [-T\lambda_j, T\lambda_j]$; and also analogously $\widetilde{\rho}\in L_T$ if and only if $\re\mu_j=-k-\lambda_j \frac{\sigma_1}{2}= l_{j, k}$  and  $\im\mu_j\in [-T\lambda_j, T\lambda_j]$.


Now, we get
\begin{eqnarray}
&&\lim_{T\to\infty}\sum_{\begin{subarray}{c} -a\leq \re\rho\leq a+\sigma_1\\|\im\rho|\leq T
\end{subarray}}ord(\rho)\mathcal{M}_{\frac{\sigma_1}{2}}g(\rho) \label{NPF1}\\
&=& \sum_{\rho\in Z(F)}ord(\rho)\mathcal{M}_{\frac{\sigma_1}{2%
}}g(\rho)-\sum_{i=1}^N m_i \mathcal{M}_{\frac{\sigma_1}{2}}g(s_i)\nonumber \\
&+& \sum_{k=0}^{M_{max}} \sum_{j=1}^r \delta^-_{j, k}\mathcal{M}_{\frac{\sigma_1}{2}}g\left(\frac{-k-\mu_j}{\lambda_j}\right)
+ \sum_{k=0}^{M_{max}} \sum_{j=1}^r \delta^+_{j, k}\mathcal{M}_{\frac{\sigma_1}{2}}g\left(\frac{-k-\mu_j}{\lambda_j}\right) \notag \\
&+& \sum_{k=0}^{M_{max}} \sum_{j=1}^r \delta^l_{j, k}\mathcal{M}_{\frac{\sigma_1}{2}}g\left(\frac{\sigma_1}{2}-i\frac{\im \mu_j}{\lambda_j}\right)
-\sum_{i=2M+\delta(\sigma_1) + 1}^{N} m_i \mathcal{M}_{\frac{\sigma_1}{2}}g\left( \sigma_1 - \overline{s_i}\right),\notag
\end{eqnarray}
where $\delta^-_{j, k} = 1$ if and only if $\re\mu_j\in L_{j, k}$, $\delta^+_{j, k} = 1$ if and only if $\re\mu_j\in R_{j, k}$ and $\delta^l_{j, k} = 1$ if and only if $\re\mu_j=l_{j,k}$
and $\delta^+_{j, k} = \delta^-_{j, k}= \delta^l_{j, k}=0$, otherwise.
Let us notice, again, that if $M_{max} <0$ sums over $k \in \{0,\ldots, M_{max}\}$ are empty.

Next, we evaluate the second sum on the left hand side of  \eqref{EF}. The function $\Psi_F$ has zero $\kappa_z = \sigma_1 + \frac{k + \overline{\mu_j}}{\lambda_j}$
and pole $\kappa_p =  \frac{-k - \mu_j}{\lambda_j}$, $\left(k\in \{0, \dots, M_{max} \} \right)$; when $M_{max}<0$ there are no zeros or poles of $\Psi_F$ in the strip $-a\leq \re \kappa \leq \frac{\sigma_1}{2}$, hence all sums below are empty.

It is easy to see that $\kappa_p\in R^-_{a, T}\bigcup L_T$ if and only if $\re\mu_j\in L_{j, k}$ or $\re\mu_j=l_{j,k}$ and
$\im\mu_j\in [-T\lambda_j, T\lambda_j]$.
Also, $\kappa_z\in R^-_{a, T}\bigcup L_T$ if and only if $\re\mu_j\in R_{j, k}$ or $\re\mu_j=l_{j,k}$ and $\im\mu_j\in [-T\lambda_j, T\lambda_j]$, therefore
\begin{eqnarray}\label{NPPSI}
&&\lim_{T\to\infty}\sum_{\begin{subarray}{c} -a\leq \re\kappa\leq \frac{\sigma_1}{2}\\|\im\kappa|\leq T
\end{subarray}}ord(\kappa)\mathcal{M}_{\frac{\sigma_1}{2%
}}g(\kappa) \\
&=& \sum_{k=0}^{M_{max}} \sum_{j=1}^r \delta^+_{j, k}\mathcal{M}_{\frac{\sigma_1}{2%
}}g\left(\sigma_1+\frac{k+\overline{\mu_j}}{\lambda_j}\right)
- \sum_{k=0}^{M_{max}}\sum_{j=1}^r \delta^-_{j, k}\mathcal{M}_{\frac{\sigma_1}{2%
}}g\left(\frac{-k-\mu_j}{\lambda_j}\right) \notag
\end{eqnarray}
Using relations  (\ref{NPF1}) and (\ref{NPPSI}), we get
\begin{eqnarray}\label{LS}
&&\lim_{T\to\infty}\sum_{\begin{subarray}{c} -a\leq \re\rho\leq a+\sigma_1\\|\im\rho|\leq T
\end{subarray}}ord(\rho)\mathcal{M}_{\frac{\sigma_1}{2%
}}g(\rho) + \lim_{T\to\infty}\sum_{\kappa\in R^-_{a, T}}ord(\kappa)\mathcal{M}_{\frac{\sigma_1}{2%
}}g(\kappa) \\
&=& \sum_{\rho\in Z(F)}ord(\rho)\mathcal{M}_{\frac{\sigma_1}{2%
}}g(\rho) - \sum_{i=1}^N m_i \mathcal{M}_{\frac{\sigma_1}{2}}g(s_i)\nonumber\\
&+& \sum_{k=0}^{M_{max}} \sum_{j=1}^r \delta^+_{j, k}\mathcal{M}_{\frac{\sigma_1}{2}}g\left(\frac{-k-\mu_j}{\lambda_j}\right)+ \sum_{k=0}^{M_{max}} \sum_{j=1}^r \delta^+_{j, k}\mathcal{M}_{\frac{\sigma_1}{2%
}}g\left(\sigma_1+\frac{k+\overline{\mu_j}}{\lambda_j}\right)\nonumber\\
&+& \sum_{k=0}^{M_{max}} \sum_{j=1}^r \delta^l_{j, k}\mathcal{M}_{\frac{\sigma_1}{2}}g\left(\frac{\sigma_1}{2}-i\frac{\im \mu_j}{\lambda_j}\right)
-\sum_{i=2M+\delta(\sigma_1) + 1}^{N}m_i \mathcal{M}_{\frac{\sigma_1}{2}}g\left( \sigma_1 - \overline{s_i}\right)\notag.
\end{eqnarray}
It is left to evaluate the Weil functional. Using the functional equation for the gamma function  $\Gamma(s+1)=s \Gamma(s)$ and the definition \eqref{Psi_F} of the factor $\Psi_F$, we get that the Weil functional is given by
\begin{eqnarray}\label{Weilf}
&&W_{\Psi_F}(G) = \frac{1}{ \sqrt{2\pi} }\lim_{T\rightarrow\infty}\int\limits_{-T}^{T}\hat{G}
(t)\frac{\Psi^{\prime }_F}{\Psi_F}\left(\frac{\sigma_1}{2}+it\right) dt \\
&=& 2\log Q_F G(0)+\sum_{j=1}^r \lambda_j \frac{1}{\sqrt{2\pi}}\lim_{T\rightarrow\infty}\int\limits_{-T}^{T}\hat{G}(t)
\left[ \frac{\Gamma^{\prime }}{\Gamma}\left( \lambda_j \left( \frac{\sigma_1}{2}+it\right)
+\mu_j+N_c\right) \right. \notag \\
&+& \left. \frac{\Gamma^{\prime }}{\Gamma}\left(\lambda_j\left(\frac{\sigma_1}{2}
-it\right)+\overline{\mu_j}+N_c\right) \right] dt \notag \\
&-& \sum_{k=0}^{N_c-1}\sum_{j=1}^{l_k}
\frac{\lambda_j}{\sqrt{2\pi}} \lim_{T\rightarrow\infty}\int\limits_{-T}^{T}\frac{ \hat{G}(t)(\lambda_j \sigma_1+
2 \re\mu_j+2k)dt}{\left( \lambda_j \left(\frac{\sigma_1}{2}-it\right)
+\overline{\mu_j}+k\right)\left( \lambda_j\left(\frac{\sigma_1}{2}+it\right)+\mu_j+k\right)}  \notag \\
&-& \sum_{k=0}^{N_c-1}\sum_{j=d_k+1}^{r}
\frac{\lambda_j}{\sqrt{2\pi}} \lim_{T\rightarrow\infty}\int\limits_{-T}^{T}\frac{ \hat{G}(t)(\lambda_j \sigma_1+
2 \re\mu_j+2k)dt}{\left( \lambda_j \left(\frac{\sigma_1}{2}-it\right)
+\overline{\mu_j}+k\right)\left( \lambda_j\left(\frac{\sigma_1}{2}+it\right)+\mu_j+k\right)}. \notag
\end{eqnarray}
Let us notice that the limit of the integral appearing in the sum containing the logarithmic derivatives of the gamma functions can be written as

\begin{equation}\label{GamaWeil}
\lim_{T\rightarrow \infty }\frac{1}{\sqrt{2\pi }}\int\limits_{-T+\frac{ \im\mu_j}{ \lambda_j}%
}^{T+\frac{ \im\mu_j}{ \lambda_j}}\hat{G} \left( u-\frac{ \im\mu_j}{ \lambda_j} \right)
\left[ \frac{\Gamma ^{\prime }}{\Gamma }\left( \alpha_{N_c,j}(u) \right)
+\frac{\Gamma ^{\prime }}{\Gamma }\left(\overline{\alpha_{N_c,j}(u)} \right) \right] du,
\end{equation}
where $\alpha_{N_c,j}(u) = \lambda_j\frac{ \sigma_1}{2}+\re\mu_j+N_c+i\lambda_ju.$ The application of the Stirling formula
and the bound proved in \cite[Lemma 4.1.]{AvdispahicSmajlovicI}
implies that in the above integral we may ($%
\text{mod }o(1),$ as $T\rightarrow \infty $) replace the interval $\left(-T+\frac{\im\mu_j}{\lambda_j},
T+\frac{\im\mu_j}{\lambda_j}\right)$ by the interval $\left( -T,T\right) $, to get
$$
(\ref{GamaWeil})=\lim_{T\rightarrow \infty }\frac{1}{\sqrt{2\pi }}%
\int\limits_{-T}^{T}\left( \hat{G}_j(u)+\hat{G}_j(-u)\right) \frac{%
\Gamma ^{\prime }}{\Gamma }\left( \lambda_j\frac{\sigma_1}{2}+\re\mu_j+N_c+i\lambda_ju\right) du.
$$
Application of the classical Barner-Weil formula for the Weil
functional \cite[Corollary 6.1.]{AvdispahicSmajlovic} with $f_j(x)=G_j(x)+G_j(-x)$, $a=\lambda_j\frac{\sigma_1}{2}+
\re\mu_j+N_c>0$ and $b=\frac{1}{\lambda_j}>0$ implies that
\begin{equation}\label{WeilDioI}
(\ref{GamaWeil})=\int_{0}^{\infty }\left( \frac{2G(0)}{x}-
\frac{1}{\lambda_j}\frac{e^{\left( 1-\lambda_j\frac{\sigma_1}{2}-\re\mu_j-N_c\right)\frac{x%
}{\lambda_j}}}{1-e^{-\frac{x}{\lambda_j}}}\left( G_j(-x)+G_j(x)\right) \right)e^{-\frac{x}{\lambda_j}} dx.
\end{equation}
On the other hand, applying \cite[Lemma 8.1.]{AvdispahicSmajlovicI} we get
\begin{align}\label{WeilDioII}
&\frac{1}{\sqrt{2\pi}}%
\lim_{T\rightarrow\infty}\int\limits_{-T}^{T}\hat{G}(t)\left(\frac{1}{\lambda_j\left(\frac{\sigma_1}{2}-it\right)%
+ \overline{\mu_j}+k}+\frac{1}{\lambda_j\left(\frac{\sigma_1}{2}%
+it\right)+\mu_j+k}\right) dt\\
&= \frac{1}{\lambda_j} \left\{
\begin{array}{ll}
 \int^{\infty}_0 e^{-x\frac{\sigma_1}{2}-\frac{x}{\lambda_j}\left(\re\mu_j+k\right)}\left(G_j(x)+G_j(-x)\right) dx , & \text{  if  } \frac{\sigma_1}{2}+\frac{\re\mu_j+k}{\lambda_j}>0 \nonumber\\
-\int^{\infty}_0 e^{x\frac{\sigma_1}{2}+\frac{x}{\lambda_j}\left(\re\mu_j+k\right)}\left(G_j(x)+G_j(-x)\right) dx ,  & \text{  if  } \frac{\sigma_1}{2}+\frac{\re\mu_j+k}{\lambda_j}<0%
\end{array}
\right.\nonumber
\end{align}
Substitution $x=\lambda_j t$ in (\ref{WeilDioI}) combined with  (\ref{WeilDioII}) yields
\begin{eqnarray*}\label{psi1}
&&W_{\Psi_F}(G) = 2\log Q_F G(0) \\
&+& \sum_{j=1}^r\lambda_j\int_{0}^{\infty }\left( \frac{2G(0)}{t}-
\frac{e^{\left( 1-\lambda_j\frac{\sigma_1}{2}-\re\mu_j-N_c\right)t}}{1-e^{-t}}\left( G_j(-\lambda_jt)
+G_j(\lambda_jt)\right) \right)e^{-t} dt \notag \\
&+& \sum_{k=0}^{N_c-1}\sum_{j=1}^{l_k}\int^{\infty}_0 e^{x\frac{\sigma_1}{2}+
\frac{x}{\lambda_j}\left(\re\mu_j+k\right)}\left(G_j(x)+G_j(-x)\right) dx \notag\\
&-& \sum_{k=0}^{N_c-1}\sum_{j=d_k+1}^r \int^{\infty}_0 e^{-x\frac{\sigma_1}{2}-
\frac{x}{\lambda_j}\left(\re\mu_j+k\right)}\left(G_j(x)+G_j(-x)\right) dx .\notag
\end{eqnarray*}
This, together with (\ref{EF}) and (\ref{LS})
proves (\ref {ExplicitF}).
\end{proof}

\begin{proposition} \label{log der xi as series}
Let $F\in\s^{\sharp\flat}(\sigma_0,\sigma_1)$, for some fixed $\sigma_0\geq \sigma_1 >0$ be a function such that $0\notin Z(F)$. Then the logarithmic derivative of $\xi_F$ satisfies the relation
\begin{equation} \label{xi_f log der}
\frac{\xi_F '}{\xi_F}(s)=\left. \sum_{\rho\in Z(F)} \right.^{\ast} \frac{1}{s-\rho},
\end{equation}
for all $s\in\C \setminus Z(F)$, where the zeros are counted according to their multiplicities.
\end{proposition}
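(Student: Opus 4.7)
The plan is to apply Hadamard's factorization to $\xi_F$, which by Lemma \ref{order one} is entire of order one, and which satisfies $\xi_F(0)\neq 0$ by the hypothesis $0\notin Z(F)$. Hadamard's theorem then gives
\[
\xi_F(s)=e^{A+Bs}\prod_{\rho\in Z(F)}\left(1-\frac{s}{\rho}\right)e^{s/\rho},
\]
where the product converges absolutely and locally uniformly on $\C$ and, in particular, $\sum_\rho|\rho|^{-2}<\infty$. Term-by-term logarithmic differentiation produces
\[
\frac{\xi_F'}{\xi_F}(s)=B+\sum_{\rho\in Z(F)}\left(\frac{1}{s-\rho}+\frac{1}{\rho}\right),\qquad s\notin Z(F),
\]
with absolute and locally uniform convergence, since $\frac{1}{s-\rho}+\frac{1}{\rho}=\frac{s}{\rho(s-\rho)}=O(|\rho|^{-2})$.

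Splitting the truncated sum as
\[
\sum_{|\im\rho|\leq T}\frac{1}{s-\rho}=\sum_{|\im\rho|\leq T}\left(\frac{1}{s-\rho}+\frac{1}{\rho}\right)-\sum_{|\im\rho|\leq T}\frac{1}{\rho},
\]
the first sum on the right converges to $\xi_F'/\xi_F(s)-B$ as $T\to\infty$ by absolute convergence, so it suffices to prove that $\lim_{T\to\infty}\sum_{|\im\rho|\leq T}\frac{1}{\rho}=-B$. For this, I would exploit the involution $\rho\mapsto\sigma_1-\bar\rho$ on $Z(F)$ induced by the functional equation $\xi_F(s)=\omega\,\overline{\xi_F(\sigma_1-\bar s)}$: it preserves multiplicities, and since $\im(\sigma_1-\bar\rho)=\im\rho$ it leaves the truncation $|\im\rho|\leq T$ invariant, so that
\[
\sum_{|\im\rho|\leq T}\frac{1}{\rho}=\frac{1}{2}\sum_{|\im\rho|\leq T}\left(\frac{1}{\rho}+\frac{1}{\sigma_1-\bar\rho}\right).
\]
Separately, differentiating the functional equation at $s=0$ yields the relation $B=-\overline{\xi_F'/\xi_F(\sigma_1)}$; substituting the absolutely convergent Hadamard series for $\xi_F'/\xi_F(\sigma_1)$ gives an independent closed-form expression for $B$, and matching this expression with the symmetrized partial sum above completes the identification of $\lim_{T\to\infty}\sum_{|\im\rho|\leq T}1/\rho$ with $-B$.

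The main technical obstacle is securing the existence of the conditionally convergent limit $\sum^*_\rho \frac{1}{\rho}$ and identifying it with $-B$: the real part is controlled by absolute convergence of $\sum|\rho|^{-2}$, while the imaginary part requires the cancellation delivered by the $\rho\leftrightarrow\sigma_1-\bar\rho$ pairing together with the compatibility of the symmetric truncation, plus the zero-density bound $N(T)\ll T\log T$ inherent in the order-one property of $\xi_F$. Once this is in place, \eqref{xi_f log der} follows for every $s\in\C\setminus Z(F)$.
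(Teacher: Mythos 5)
Your high-level strategy (Hadamard factorization of the order-one entire function $\xi_F$, then identification of the constant $B$ with $-\sum^{\ast}_{\rho}1/\rho$) is a legitimate classical route, but the mechanism you propose for the one genuinely hard step does not work. The involution $\rho\mapsto\sigma_1-\bar\rho$ \emph{preserves} the imaginary part of $\rho$, so the paired terms
\[
\frac{1}{\rho}+\frac{1}{\sigma_1-\bar\rho}=\frac{\sigma_1+2i\,\im\rho}{\rho\,(\sigma_1-\bar\rho)}
\]
have modulus $\asymp 1/|\im\rho|$ and imaginary parts $-\im\rho/|\rho|^2-\im\rho/|\sigma_1-\bar\rho|^2$ of the \emph{same} sign; the symmetrization produces no cancellation whatsoever in $\im\sum 1/\rho$, and the symmetrized partial sums are exactly as delicate as the original ones. (The cancellation you need is between zeros with $\im\rho>0$ and $\im\rho<0$, and for a general $F\in\shfs$, which need not be self-dual, the functional equation supplies no such pairing.) Your second ingredient fails for the same reason: differentiating the functional equation gives $B=-\overline{(\xi_F'/\xi_F)(\sigma_1)}=-\bar B-\sum_\rho\overline{\bigl(\tfrac{1}{\sigma_1-\rho}+\tfrac1\rho\bigr)}$, which is not an independent closed form for $B$ but an identity that determines only $2\re B$ as an absolutely convergent sum. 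Thus $\im B$, and with it the existence and value of the conditionally convergent limit $\lim_{T\to\infty}\sum_{|\im\rho|\le T}1/\rho$, is left unproved; this is precisely the content of the proposition, not a technicality.

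The paper closes this gap by a different device: it applies its explicit formula (Proposition \ref{explicit f-la}), whose proof rests on contour integration over the rectangles $R_{a,T}$, to the one-sided test function $G_s(x)=e^{(s-\sigma_1/2)x}$ for $x<0$ (and $0$ for $x>0$), whose translated Mellin transform is $1/(s-\theta)$. All gamma-factor and pole contributions on the right-hand side recombine into $\xi_F'/\xi_F(s)$, yielding \eqref{xi_f log der} directly for $\re s>\sigma_1+a$ with the $\ast$-convergence supplied by the explicit formula itself, and the general case follows by meromorphic continuation. If you want to salvage your Hadamard approach you must replace the symmetry argument by something equivalent to this contour integration (for instance, an argument-principle estimate controlling the imbalance between zeros in the upper and lower half-planes), at which point you have essentially reconstructed the paper's proof.
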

\begin{proof}

We proceed analogously as in \cite{OdSm09} and apply the explicit formula \eqref{ExplicitF} to the test function
$$
G_{s}\left( x\right) =\left\{
\begin{array}{ll}
0,&x>0 \\
1/2,&x=0 \\
\exp \left( s-\sigma_1/2\right) x,&x<0%
\end{array}
\right. \text{, \ }s\in \mathbb{C}\text{, }\re s>\sigma_1+a,
$$
where $a$ is defined in Proposition \ref{explicit f-la}. For $\re \theta \leq a+\sigma_1 < \re s $, one has
\begin{eqnarray}\label{MellinForG}
\mathcal{M}_{\frac{\sigma_1}{2}}g(\theta) &=& \int_{-\infty}^{\infty}G_s(u)e^{-\left(\theta-\frac{\sigma_1}{2}\right)u} du
=\int_{-\infty}^0 e^{-\left(s-\theta \right)u} du = \frac{1}{s-\theta}.
\end{eqnarray}
Furthermore, $g(n)=G_s(-\log n)=n^{- \left(s-\frac{\sigma_1}{2}\right)}$ and $g\left(\frac{1}{n}\right)=G_s(\log n)=0$, hence
\begin{equation}\label{SumN}
\sum_{n=2}^{\infty}\frac{-c_F(n)}{n^{\frac{\sigma_1}{2}}}g(n)
+\sum_{n=2}^{\infty}\frac{\overline{-c_F(n)}}{n^{\frac{\sigma_1}{2}}}g\left(\frac{1}{n}\right)
=\sum_{n=2}^{\infty}\frac{-c_F(n)}{n^{s}}=\frac{F ^{\prime }}{F}(s),
\end{equation}
for $\re s >\sigma_1 +a >\sigma_0$. Additionally, for $t>0$ we have
$G_j(-\lambda_j t) =e^{-\lambda_j t\left(s-\frac{\sigma_1}{2}\right)-it\im \mu_j}$ and $G_j(\lambda_j t)=0$, hence
\begin{eqnarray*}\label{integral}
&&\int_{0}^{\infty }\left( \frac{2G(0)}{t}
-\frac{e^{\left( 1-\lambda_j\frac{\sigma_1}{2}-\re\mu_j-N_c\right)t}}{1-e^{-t}}\left( G_j(-\lambda_jt)
+G_j(\lambda_jt)\right) \right)e^{-t} dt \\
&=& \int_{0}^{\infty }\left( \frac{e^{-t}}{t}-
\frac{e^{-t \left( \lambda_js+\mu_j+N_c\right)}}{1-e^{-t}}\right) dt
=\frac{\Gamma^{\prime } }{\Gamma }(\lambda_js+\mu_j+N_c),
\end{eqnarray*}
by the Gauss formula for the gamma function. Furthermore, when $\frac{\sigma_1}{2} + \frac{\re\mu_j +k}{\lambda_j} <0$ one has
\begin{equation}\label{sumPoz}
\int^{\infty}_0 e^{x\frac{\sigma_1}{2}+
\frac{x}{\lambda_j}\left(\re\mu_j+k\right)}\left(G_j(x)+G_j(-x)\right) dx
= \frac{-1}{ \sigma_1-s+\frac{k+\overline{\mu_j}}{\lambda_j}}
\end{equation}
and for $\frac{\sigma_1}{2} + \frac{\re\mu_j +k}{\lambda_j} >0$ one has
\begin{equation}\label{sumNeg}
\int^{\infty}_0 e^{-x\frac{\sigma_1}{2}-
\frac{x}{\lambda_j}\left(\re\mu_j+k\right)}\left(G_j(x)+G_j(-x)\right) dx=
 \frac{1}{s+\frac{k+\mu_j}{\lambda_j}}.
\end{equation}
Since $M_{max} = \lfloor a\lambda - c\rfloor \geq \lfloor-c \rfloor$, we may take $N_c=M_{max} +1$ and put (\ref{MellinForG}), (\ref{SumN}), (\ref{integral}), (\ref{sumPoz}) and (\ref{sumNeg}) into (\ref{ExplicitF}) to get
\begin{eqnarray}\label{explicitf}
&&\lim_{T\rightarrow \infty }\sum_{\begin{subarray}{c}\rho\in Z(F)\\ |\im\rho|\leq T
\end{subarray}}\frac{ord(\rho)}{s-\rho}
= \sum_{i=1}^N \frac{m_i}{s-s_i}+\frac{F ^{\prime }}{F}(s)+\log Q_F\\&+&\sum_{j=1}^r\lambda_j \frac{\Gamma^{\prime } }{\Gamma }(\lambda_js+\mu_j+M_{max}+1)
+ \sum_{k=0}^{M_{max}}\sum_{j=1}^{l_k}\frac{-1}{ \sigma_1-s+\frac{k+\overline{\mu_j}}{\lambda_j}}\nonumber\\
&-&\sum_{k=0}^{M_{max}}\sum_{j=d_k+1}^r \frac{1}{s+\frac{k+\mu_j}{\lambda_j}}
- \sum_{k=0}^{M_{max}} \sum_{j=1}^{l_k} \left( \frac{1}{s+\frac{k+\mu_j}{\lambda_j}}+
\frac{1}{s-\left(\sigma_1+\frac{k+\overline{\mu_j}}{\lambda_j}\right)} \right) \notag \\
&-& \sum_{k=0}^{M_{max}}\sum_{j=l_k+1}^{d_k}\frac{1}{ s-\frac{\sigma_1}{2}+i\frac{\im\mu_j}{\lambda_j}} + \sum_{i=2M+1+\delta(\sigma_1)}^{N} \frac{m_i}{s-(\sigma_1 - \overline{s_i}) }. \notag
\end{eqnarray}
The completed function $\xi_F(s)$, using the functional equation for the gamma function can be written as
\begin{eqnarray*}
\xi_F(s)&=& F(s)Q_F^s\prod_{i=1}^{N}(s-s_i)^{m_i}\prod_{i=2M+1+ \delta( \sigma_1)}^N( \sigma_1-s-\overline{s_i})^{m_i}\\&&\prod_{j=1}^r \Gamma( \lambda_j s+ \mu_j+M_{max}+1)\prod_{k=0}^{M_{max}}\frac{1}{\lambda_j s+\mu_j+k}.
\end{eqnarray*}
Taking the logarithmic derivative of $\xi_F(s)$, combined with
(\ref{explicitf}) yields

$$
\lim_{T\rightarrow \infty }\sum_{\begin{subarray}{c}\rho\in Z(F)\\ |\im\rho|\leq T
\end{subarray}}\frac{ord(\rho)}{s-\rho} =
\frac{\xi_F^{\prime }}{\xi_F}(s) + \sum_{k=0}^{M_{max}}\sum_{j=1}^{r}\frac{\lambda_j}{\lambda_js+\mu_j+k}
$$
$$
-\sum_{k=0}^{M_{max}}\sum_{j=d_k +1}^{r}\frac{1}{s+\frac{k+\mu_j}{\lambda_j}}
- \sum_{k=0}^{M_{max}}\sum_{j=1}^{l_k} \frac{1}{s+\frac{\mu_j+k}{\lambda_j}}- \sum_{k=0}^{M_{max}}\sum_{j=l_k+1}^{d_k} \frac{1}{s-\frac{\sigma_1}{2}+i\frac{\im\mu_j}{\lambda_j}}. \notag
$$
Since the four sums in the above equation cancel out, we end up with the equation
$$
\frac{\xi_F^{\prime }}{\xi_F}(s)=
\lim_{T\rightarrow \infty }\sum_{\begin{subarray}{c}\rho\in Z(F)\\ |\im\rho|\leq T
\end{subarray}}\frac{ord(\rho)}{s-\rho} = -\underset{\rho
\in Z(F)}{\sum }^{\ast } \frac{1}{s-\rho}
$$
valid for $\re s > \sigma_1 +a$. The left hand side of the above equation is well defined for all $s \in \mathbb{C}\setminus Z(F)$, hence, by meromorphic continuation, we deduce the statement of the theorem.
\end{proof}

When $0\notin Z(F)$, inserting $s=0$ into equation \eqref{xi_f log der} we immediately deduce that
\begin{equation}\label{b_f as sum}
\frac{\xi_F'}{\xi_F}(0)= -\underset{\rho
\in Z(F)}{\sum }^{\ast } \frac{1}{\rho}.
\end{equation}

\section{$\tau-$Li coefficients and $\tau-$Li criterion for the class $\shfs$}

In this section we give the precise definition of $\tau-$Li coefficients attached to functions from the class $\shfs$ and prove the Li-type criterion for the zero-free regions of functions from $\shfs$.
\begin{definition}\label{tauliDef}
Let $\tau\in[\sigma_1,+\infty)$ . For an arbitrary positive integer $n$, the $n$th $\tau-$Li coefficient associated to the $F\in\shfs$ is defined as
\begin{equation}\label{tauli}
\lambda_{F}(n,\tau)=\left.\sum_{\rho\in Z(F)}\right.^{*}\left(1-\left(\frac{\rho}{\rho-\tau}\right)^n\right),
\end{equation}
where $Z(F)$ denotes the set of non-trivial zeros of $F$.
\end{definition}
The above definition is a generalization of the corresponding definition of $\tau-$Li coefficients attached to a function from the class $\shf$ from \cite{Dr12}, in the sense that it is given for a broader class of functions.

First, we prove that the coefficients $\lambda_F(n,\tau)$ are well defined.

\begin{lemma} \label{tau-li are well def.}
Let $F\in \shfs$ and let  $\tau\in[\sigma_1,+\infty)$ be an arbitrary fixed real number such that $0,\tau\notin Z(F)$. Then the following assertions are valid.
\begin{itemize}
\item[(i)] The sum \eqref{tauli} defining $\lambda_F(n,\tau)$ is $\ast-$convergent for every positive integer $n$.
\item[(ii)] We have
$$
\re(\lambda_F(n,\tau))= \sum_{\rho\in Z(F)} \re \left( 1-\left(\frac{\rho}{\rho-\tau} \right)^n\right),
$$
where the sum on the right-hand side is absolutely convergent for every positive integer $n$.
\end{itemize}
\end{lemma}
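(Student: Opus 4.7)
The plan is to reduce the series \eqref{tauli} to a finite linear combination of partial-fraction sums $\left.\sum_{\rho}\right.^{\ast} (\rho - \tau)^{-k}$, $k=1,\ldots,n$, and to handle $k=1$ and $k\geq 2$ separately, invoking Proposition \ref{log der xi as series} for the former and the order-one property (Lemma \ref{order one}) for the latter. Writing $\frac{\rho}{\rho-\tau} = 1 + \frac{\tau}{\rho-\tau}$, the binomial theorem yields
\begin{equation*}
1 - \left(\frac{\rho}{\rho - \tau}\right)^{n}
= -\sum_{k=1}^{n} \binom{n}{k} \frac{\tau^{k}}{(\rho - \tau)^{k}},
\end{equation*}
so $\ast$-convergence of $\lambda_F(n,\tau)$ is equivalent to $\ast$-convergence of each of the finitely many inner sums.

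For $k\geq 2$ I would establish absolute convergence. By Lemma \ref{order one} the completed function $\xi_F$ is entire of order $1$, so Jensen's formula gives the Riemann--von Mangoldt type estimate $N(T) := \#\{\rho \in Z(F) : |\im \rho| \leq T\} = O(T \log T)$. Since non-trivial zeros lie in the strip $\sigma_1 - \sigma_0 \leq \re \rho \leq \sigma_0$ and $\tau \notin Z(F)$, we have $|\rho - \tau| \gg 1 + |\im \rho|$, and partial summation against $dN(T)$ gives $\sum_{\rho}|\rho - \tau|^{-k} < \infty$ for $k\geq 2$. For $k=1$ I would use that $0,\tau \notin Z(F)$ and apply Proposition \ref{log der xi as series} at $s=\tau$ to obtain
\begin{equation*}
\left.\sum_{\rho \in Z(F)}\right.^{\ast} \frac{1}{\rho - \tau}
= -\frac{\xi_F'}{\xi_F}(\tau),
\end{equation*}
a well-defined finite complex number. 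Combining the two cases proves (i).

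For (ii) I would show that the real part of each summand is absolutely summable in $\rho$, which allows me to push $\re$ inside the $\ast$-limit and promote it to an unrestricted absolutely convergent sum. For $k\geq 2$ this is immediate from $|\re (\rho-\tau)^{-k}| \leq |\rho-\tau|^{-k}$ and the previous paragraph. The delicate case is $k=1$: writing $\rho = \beta + i\gamma$ and using that $\tau$ is real,
\begin{equation*}
\re\frac{1}{\rho - \tau} = \frac{\beta - \tau}{(\beta - \tau)^{2} + \gamma^{2}} = O\!\left(\frac{1}{1 + \gamma^{2}}\right),
\end{equation*}
because $|\beta - \tau|$ is bounded on the critical strip. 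The estimate $N(T) = O(T \log T)$ then makes $\sum_{\rho}|\re(\rho-\tau)^{-1}|$ converge, and summing in $k$ yields absolute convergence of $\sum_\rho \left|\re\!\left(1 - (\rho/(\rho-\tau))^{n}\right)\right|$, from which (ii) follows.

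The main obstacle is the $k=1$ term, since $\sum_\rho(\rho-\tau)^{-1}$ is only conditionally convergent. Part (i) must therefore appeal to the explicit-formula machinery packaged in Proposition \ref{log der xi as series}, while part (ii) rests on the cancellation produced by taking the real part, which effectively upgrades the $|\rho|^{-1}$ decay to $|\rho|^{-2}$ decay and crucially uses that $\tau$ is real (so that the numerator $\beta - \tau$ of $\re(\rho-\tau)^{-1}$ stays bounded on the critical strip).
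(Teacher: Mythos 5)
Your proof is correct and follows essentially the same route as the paper: the paper simply cites Bombieri--Lagarias Lemma~1 (applied to $Z(F,\tau)$) together with Lemma~\ref{order one}, Proposition~\ref{log der xi as series} and \eqref{b_f as sum}, and your argument is precisely an unpacking of that cited lemma --- the binomial decomposition into sums of $(\rho-\tau)^{-k}$, absolute convergence for $k\geq 2$ from the order-one zero count, the explicit-formula proposition for the conditionally convergent $k=1$ term, and the real-part cancellation $\re\frac{1}{\rho-\tau}=O\bigl((1+\gamma^{2})^{-1}\bigr)$ for part (ii). No gaps.
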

\begin{proof}
The proof follows the lines of the proof of \cite[Lemma 2.1.1.]{Dr12} which is based on a result of Bombieri and Lagarias \cite[Lemma 1]{Bombieri}. Namely, \cite[Lemma 1]{Bombieri} applied to the set $Z(F,\tau)=\left\{\frac{\rho}{\tau}:\rho\in Z(F)\right\}$, together with Lemma \ref{order one}, Proposition \ref{log der xi as series} and formula \eqref{b_f as sum} completes the proof.
\end{proof}

Now, we are able to state and prove the Li-type criterion for the zero-free regions for the functions from the class $\shfs$.  It is based on the Li criterion for arbitrary complex multiset proved in \cite{Bombieri} and its modification derived in \cite{Dr12}.

\begin{theorem}\label{thmLiCriterion}
Let $F\in \shfs$ and let $\tau\in[\sigma_1,+\infty)$ be an arbitrary fixed real number such that $0,\tau\notin Z(F)$. The following two statements are equivalent
\begin{enumerate}
  \item [(i)] $\sigma_1-\frac{\tau}{2}\leq \re \rho\leq\frac{\tau}{2}$ for every $\rho\in Z(F)$,
  \item [(ii)] $\re \lambda_F(n,\tau)\geq 0$ for every positive integer $n$.
\end{enumerate}
\end{theorem}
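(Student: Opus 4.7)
The plan is to reduce Theorem \ref{thmLiCriterion} to a Bombieri--Lagarias-type positivity criterion, applied to the multiset obtained from $Z(F)$ via the M\"obius transformation $w_\rho := \rho/(\rho-\tau)$. The first step is a symmetry reduction: the functional equation in axiom (iii'), written as $\xi_F(s) = \omega\,\overline{\xi_F(\sigma_1-\bar s)}$, forces $Z(F)$ to be invariant under the involution $\rho \mapsto \sigma_1-\bar\rho$. Since $\tau \geq \sigma_1 > 0$, this shows that condition (i) is equivalent to the one-sided bound $\re\rho \leq \tau/2$ for every $\rho \in Z(F)$: if the latter holds, then the partner $\sigma_1-\bar\rho$ of any $\rho \in Z(F)$ is also a zero and satisfies $\re(\sigma_1-\bar\rho) \leq \tau/2$, which rearranges to $\re\rho \geq \sigma_1-\tau/2$.

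Next I would observe that, because $\tau>0$, the identity $|w_\rho|^2 = |\rho|^2/|\rho-\tau|^2$ gives $|w_\rho| \leq 1 \Longleftrightarrow \re\rho \leq \tau/2$, with $|w_\rho|=1$ precisely when $\re\rho=\tau/2$. Hence condition (i) is further equivalent to the assertion that the multiset $\mathcal{R} := \{w_\rho : \rho \in Z(F)\}$ is contained in the closed unit disk.

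The core of the proof is then the application of the Bombieri--Lagarias positivity lemma \cite[Lemma 1]{Bombieri}, in the adapted form used by Droll \cite{Dr12}: for a multiset of complex numbers, none equal to $1$, satisfying a suitable $\ast$-convergence hypothesis, containment in the closed unit disk is equivalent to $\re \sum_{\alpha}^{\ast}(1-\alpha^n) \geq 0$ for every positive integer $n$. Taking $\alpha = w_\rho$ and recognizing the resulting $\ast$-sum as $\lambda_F(n,\tau)$ from Definition \ref{tauliDef} gives the desired equivalence (i) $\Leftrightarrow$ (ii).

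The main technical obstacle is checking the convergence hypothesis of the Bombieri--Lagarias lemma for the transformed multiset $\mathcal{R}$ in the present generality. This is precisely the content of Lemma \ref{tau-li are well def.}, which in turn rests on Proposition \ref{log der xi as series} (representing $\xi_F'/\xi_F$ as the $\ast$-sum $\sum_\rho^{\ast}(s-\rho)^{-1}$) and the identity \eqref{b_f as sum}. Once these preliminaries are in place, the remaining argument is a direct adaptation of Droll's proof for $\shf$, with the symmetry line $\re s = 1/2$ replaced throughout by $\re s = \sigma_1/2$ and the shift parameter $1$ by $\tau$.
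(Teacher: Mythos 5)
Your proposal is correct and follows essentially the same route as the paper: both reduce to the Bombieri--Lagarias positivity criterion, verify its convergence hypothesis via the critical-strip containment and the order-one property of $\xi_F$ (through Lemma \ref{tau-li are well def.} and Proposition \ref{log der xi as series}), and obtain the lower bound in (i) from the functional-equation symmetry $\rho\mapsto\sigma_1-\bar\rho$. The only cosmetic difference is that you state the criterion as a unit-disk condition on the multiset $\{\rho/(\rho-\tau)\}$, whereas the paper applies the half-plane form of \cite[Theorem 1]{Bombieri} to the rescaled multiset $\{\rho/\tau\}$; since $\frac{\rho/\tau}{\rho/\tau-1}=\frac{\rho}{\rho-\tau}$, these are the same argument in different coordinates.
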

\begin{proof}
For the proof we apply \cite[Theorem 1]{Bombieri} or \cite[Theorem 1.6.2]{Dr12} to the multiset $R=Z(F,\tau)=\left\{\frac{\rho}{\tau}:\rho\in Z(F)\right\}$. Since $\tau\notin Z(F)$ implies $1\notin Z(F,\tau)$ in order to apply \cite[Theorem 1]{Bombieri} it is left to prove that
\begin{equation}\label{sumAssumptBL}
  \sum_{\rho\in Z(F,\tau)}\frac{1+\abs{\re\rho}}{(1+\abs\rho)^2}= \tau \sum\limits_{\rho\in Z(F)}\frac{\tau+\abs{\re\rho}}{(\tau+\abs\rho)^2} <\infty.
  \end{equation}
Zeros $\rho\in Z(F)$ are located in the critical strip $\sigma_1 -\sigma_0 \leq \re s \leq \sigma_0$, hence $\abs{\re\rho}\leq M_{\sigma_0,\sigma_1}$, where $M_{\sigma_0,\sigma_1}=\max\{\abs{\sigma_1-\sigma_0},\abs{\sigma_0}\}$. Furthermore, $\tau\geq \sigma_1>0$, hence
$$\frac{\tau+\abs{\re\rho}}{(\tau+\abs\rho)^2} \leq (\tau+M_{\sigma_0,\sigma_1})\frac{1}{\abs\rho^2},$$
 for all $\rho\in Z(F)$.
Function $F$ is entire function of order one (Lemma \ref{order one}) such that $0\notin Z(F)$, and therefore the series $\sum\limits_{\rho\in Z(F)}\frac{1}{\abs\rho^2}$ is absolutely convergent. Summation of the above inequality over all elements of $Z(F)$ shows that
$$
\sum\limits_{\rho\in Z(F)}\frac{\tau+\abs{\re\rho}}{(\tau+\abs\rho)^2} \leq (\tau+M_{\sigma_0,\sigma_1})\sum\limits_{\rho\in Z(F)}\frac{1}{\abs\rho^2}<\infty
$$
and \eqref{sumAssumptBL} is proved.

Now we may apply \cite[Theorem 1]{Bombieri} which claims that the statements $\re\rho\leq\frac{1}{2}$ for all $\rho\in Z(F,\tau)$ and $\sum\limits_{\rho\in Z(F,\tau)}\re\left(1-\left(\frac{\rho}{\rho-1}\right)^n\right)\geq 0$ for all positive integers $n$ are equivalent. Definition of the set $Z(F,\tau)$ and the assertion (ii) from Lemma \ref{tau-li are well def.} imply that the statements $\re\rho\leq\frac{\tau}{2}$ for all $\rho\in Z(F)$ and $\re \lambda_F(n,\tau)\geq 0$ for every positive integer $n$ are equivalent.

The second inequality from the statement (i) follows from the symmetry of elements in $Z(F)$ guaranteed by functional equation (iii'). Namely, $\rho \in Z(F)$ if and only if $\sigma_1-\bar{\rho}\in Z(F)$, and thus $\re\rho\leq\frac{\tau}{2}$ is equivalent to $\re{(\sigma_1-\bar\rho)}\leq\frac{\tau}{2}$, i.e. $\re\rho\geq\sigma_1-\frac{\tau}{2}$. The proof is complete.
\end{proof}

\begin{remark}\rm\label{RemTauInterval}
The assumption $\tau \geq \sigma_1$ arises naturally in the $\tau-$Li criterion, since for $\tau <\sigma_1$, the segment $[\sigma_1 - \tau/2, \tau/2]$ is an empty set, hence the statement (i) of Theorem \ref{thmLiCriterion} makes no sense.

Furthermore, since the Dirichlet series is non-vanishing in the region of its absolute convergence, for $\tau > 2\sigma_0$, the statement $\re \rho \leq \tau/2$ is satisfied by all $\rho \in Z(F)$. Hence the natural interval for values of $\tau$ in the $\tau-$Li criterion for $F\in \shfs$ is $[\sigma_1,2\sigma_0]$.

In the sequel, if not stated otherwise, we assume that $\tau \in[\sigma_1, 2\sigma_0]$.
\end{remark}

The class $\shf(1,1)$ obviously contains $\shf$, hence the $\tau-$Li criterion for the class $\shfs$ is a generalization of the $\tau-$Li criterion for the class $\shf$ derived in \cite{Dr12}. Actually, we can say more and relate $\tau-$Li criterion for the class $\shfs$ to the generalized Riemann hypothesis in the class $\shf$. Namely, the following theorem holds true.

\begin{theorem} \label{thm: tau li implies Li}
Let $F\in \shf$, $a>0$ be an arbitrary real number and let $G_a(s)= F(s-a)F(s+a)$. Then, all non-trivial zeros of $F$ are located on the line $\re s =1/2$ if and only if $\re (\lambda_{G_a}(n, 2a+1)) \geq 0$ for all positive integers $n$.
\end{theorem}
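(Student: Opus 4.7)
The plan is to regard $G_a(s)=F(s-a)F(s+a)$ as an element of the class $\shf(a+1,1)$, so that the $\tau$-Li criterion (Theorem \ref{thmLiCriterion}) applies to $G_a$ with $\tau=2a+1$, and then to translate the resulting zero-free region back to a condition on the zeros of $F$. By Example \ref{Ex: procust Fs} we have $G_a\in\shf(a+1,1)$, with parameters $\sigma_0=a+1$ and $\sigma_1=1$, and $\tau=2a+1$ lies in the interval $[\sigma_1,2\sigma_0]=[1,2a+2]$ identified in Remark \ref{RemTauInterval}. I would also check the admissibility conditions $0,\tau\notin Z(G_a)$: the value $2a+1$ is not a non-trivial zero of $G_a$ because $F'/F$ is given by an absolutely convergent Dirichlet series for $\re s>1$, hence $F$ is non-vanishing there, so neither $F(a+1)=0$ nor $F(3a+1)=0$ is possible; and the case $0\in Z(G_a)$ can be ruled out similarly (or avoided by a generic choice of $a$, since it would require $\pm a$ to be a non-trivial zero of $F$).

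Next I would apply Theorem \ref{thmLiCriterion} directly to $G_a$ with $\tau=2a+1$: the non-negativity of $\re(\lambda_{G_a}(n,2a+1))$ for all positive integers $n$ is equivalent to
\[
\tfrac{1}{2}-a\le \re\rho\le \tfrac{1}{2}+a\qquad\text{for every }\rho\in Z(G_a).
\]

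The main step is then to identify $Z(G_a)$ explicitly in terms of $Z(F)$. Because $\xi_{G_a}(s)$ differs from $\xi_F(s-a)\xi_F(s+a)$ only by an entire nonvanishing factor (a straightforward check from \eqref{xi_F} applied to $G_a$, using $\sigma_1=1$ and the functional equation for $F$), one has
\[
Z(G_a)=\{\rho'+a:\rho'\in Z(F)\}\cup\{\rho'-a:\rho'\in Z(F)\}.
\]
Substituting this into the zero-free region condition gives, for every $\rho'\in Z(F)$,
\[
\bigl|\re\rho'+a-\tfrac{1}{2}\bigr|\le a\quad\text{and}\quad \bigl|\re\rho'-a-\tfrac{1}{2}\bigr|\le a,
\]
which simplify to $\re\rho'\in[\tfrac{1}{2}-2a,\tfrac{1}{2}]$ and $\re\rho'\in[\tfrac{1}{2},\tfrac{1}{2}+2a]$ respectively. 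Their intersection forces $\re\rho'=\tfrac{1}{2}$ for every $\rho'\in Z(F)$, which is exactly the generalized Riemann hypothesis for $F$. Conversely, if $\re\rho'=\tfrac{1}{2}$ for all $\rho'\in Z(F)$, then both $\re(\rho'\pm a)=\tfrac{1}{2}\pm a$ lie in $[\tfrac{1}{2}-a,\tfrac{1}{2}+a]$, so the zero-free region condition is satisfied and Theorem \ref{thmLiCriterion} yields $\re(\lambda_{G_a}(n,2a+1))\ge 0$ for all $n$.

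The only step requiring care is the verification that the non-trivial zeros of $G_a$ are precisely the translates $\rho'\pm a$ with no spurious contribution from the gamma factors in \eqref{xi_F}, which follows from inspecting the completed function built from the product of the two completed $\xi_F$'s; the rest of the proof is a direct application of Theorem \ref{thmLiCriterion} combined with the elementary observation that requiring the shifts by $+a$ and by $-a$ to both lie in a strip of half-width $a$ around $\tfrac{1}{2}$ pins every $\re\rho'$ to $\tfrac{1}{2}$.
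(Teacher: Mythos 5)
Your proposal is correct and follows essentially the same route as the paper: place $G_a$ in $\shf(a+1,1)$ via Example \ref{Ex: procust Fs}, apply Theorem \ref{thmLiCriterion} with $\tau=2a+1$ to get the strip $\tfrac12-a\le\re\rho\le\tfrac12+a$ for $Z(G_a)=(Z(F)+a)\cup(Z(F)-a)$, and observe that the two shifted conditions intersect only at $\re\rho'=\tfrac12$. You are in fact slightly more careful than the paper in checking the hypotheses $0,\tau\notin Z(G_a)$ of Theorem \ref{thmLiCriterion} and in justifying the identification of $Z(G_a)$, which the paper simply asserts.
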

\begin{proof}
Function $G_a$ belongs to the class $\shf(a+1, 1)$, as shown in Example \ref{Ex: procust Fs}. Therefore, by Theorem \ref{thmLiCriterion} the inequality $\re (\lambda_{G_a}(n, 2a+1)) \geq 0$ for all positive integers $n$ is equivalent to the statement that for all $\rho_{G_a} \in Z(G_a)$ one has $1-\frac{2a+1}{2} \leq \re \rho_{G_a} \leq \frac{2a+1}{2}$. On the other hand, $Z(G_a)= (Z(F) +a) \cup (Z(F) -a)$, hence $\re (\lambda_{G_a}(n, 2a+1)) \geq 0$ if and only if for all $\rho_F \in Z(F)$ one has $1-\frac{2a+1}{2} \leq \re \rho_{F} -a \leq \frac{2a+1}{2}$ and $1-\frac{2a+1}{2} \leq \re \rho_{F}+a \leq \frac{2a+1}{2}$. The last two inequalities reduce to $\re \rho_F =1/2$ and the proof is completed.
\end{proof}

\section{Arithmetic formulas for $\tau-$Li coefficients}



In this section we prove two equivalent formulas for the $\tau-$Li coefficients and derive an arithmetic formula for the computation of those coefficients in terms of the coefficients $c_F(n)$ appearing in axiom (v').

Let $d_{F}(n,z_{0})$ be the power series coefficients in the
expansion of the logarithmic derivative of
$\xi_F\left(\frac{1}{1-s}\right)$ around the point $z_{0}\neq 1$ which is not a
zero of $\xi_F\left(\frac{1}{1-s}\right),$ i.e., assume that in a small neighborhood
of $z_{0}$, we have
\begin{equation}\label{defcoefdF}
\frac{d}{ds}\log
\xi_F\left(\frac{1}{1-s}\right)=\sum_{n=0}^{\infty}d_{F}(n,z_{0})(s-z_{0})^{n}.
\end{equation}
Preforming calculations analogous to those in the proof of \cite[Lemma 2.1.2]{Dr12} and having in mind the properties of the class $\shfs$, we are able to derive alternate definitions of the $\tau-$Li coefficients attached to $F\in\shfs$.

\begin{theorem}
Let $F\in \shfs$ and let $\tau\in[\sigma_1, 2\sigma_0]$ be an arbitrary fixed real number such that $0,\tau\notin Z(F)$. For every positive integer $n$ one has
\begin{equation}\label{tauLiOtherDef}
\lambda_{F}(n,\tau)= \frac{\tau}{(n-1)!}\left[\frac{d^n}{ds^n}\left(s^{n-1}\log \xi_F(s)\right)\right]_{s=\tau}=\frac{1}{\tau^n}d_{F}\left(n-1,1-\frac{1}{\tau}\right).
\end{equation}
\end{theorem}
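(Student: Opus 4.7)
The plan is to prove the two equalities separately, each reducing to an application of Leibniz's rule combined with a combinatorial identity.

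For the first equality, I would start with the purely algebraic identity
\begin{equation*}
\frac{d^n}{ds^n}\left[s^{n-1}\log(s-\rho)\right]_{s=\tau}=\frac{(n-1)!}{\tau}\left(1-\left(\frac{\rho}{\rho-\tau}\right)^n\right),
\end{equation*}
which follows from Leibniz's rule (using $(s^{n-1})^{(n)}=0$ and $(\log(s-\rho))^{(j)}=(-1)^{j-1}(j-1)!/(s-\rho)^j$ for $j\ge 1$), evaluated at $s=\tau$ after factoring out $1/\tau$, and then combined with the binomial identity $\sum_{j=1}^n \binom{n}{j}(-1)^{j-1}x^j=1-(1-x)^n$ applied with $x=\tau/(\tau-\rho)$. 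Summing the displayed identity over $\rho\in Z(F)$ in the $*$-sense reproduces $((n-1)!/\tau)\lambda_F(n,\tau)$. It then remains to show that the same $*$-sum equals $\frac{d^n}{ds^n}[s^{n-1}\log\xi_F(s)]_{s=\tau}$. Expanding this derivative by Leibniz yields a finite combination of $(\log\xi_F)^{(j)}(s)$ for $j=1,\ldots,n$. By Proposition \ref{log der xi as series}, the $j=1$ term is the genuinely $*$-convergent series $\xi_F'/\xi_F(s)=\sum^{*}_\rho 1/(s-\rho)$, while for $j\ge 2$ termwise differentiation gives $(\log\xi_F)^{(j)}(s)=\sum_\rho(-1)^{j-1}(j-1)!/(s-\rho)^j$, an absolutely convergent sum because $\xi_F$ is entire of order one by Lemma \ref{order one}. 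Absorbing the absolutely convergent pieces into the $*$-sum permits interchanging summation and differentiation, and the first equality follows.

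For the second equality, I would Taylor expand $\log\xi_F$ around $u=\tau$ (legitimate since $\tau\notin Z(F)$), substitute $u=1/(1-z)$, and set $w=z-(1-1/\tau)$. A direct computation gives $1/(1-z)-\tau=\tau^2 w/(1-\tau w)$, so expanding each $(1-\tau w)^{-k}$ as a geometric series and collecting the coefficient of $w^n$ produces
\begin{equation*}
[w^n]\log\xi_F\!\left(\frac{1}{1-z}\right)=\sum_{k=1}^{n}\binom{n-1}{k-1}\frac{\tau^{n+k}(\log\xi_F)^{(k)}(\tau)}{k!}.
\end{equation*}
Since $d_F(n-1,1-1/\tau)=n\cdot[w^n]\log\xi_F(1/(1-z))$ by definition \eqref{defcoefdF}, dividing by $\tau^n$ yields an explicit formula for $d_F(n-1,1-1/\tau)/\tau^n$ in terms of the derivatives $(\log\xi_F)^{(k)}(\tau)$. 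On the other hand, the Leibniz expansion already used in the first part gives
\begin{equation*}
\frac{\tau}{(n-1)!}\left[\frac{d^n}{ds^n}\bigl(s^{n-1}\log\xi_F(s)\bigr)\right]_{s=\tau}=\sum_{j=1}^{n}\binom{n}{j}\frac{\tau^{j}(\log\xi_F)^{(j)}(\tau)}{(j-1)!}.
\end{equation*}
The two expressions agree term by term via the elementary identity $\binom{n}{j}/(j-1)!=n\binom{n-1}{j-1}/j!$, completing the proof.

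The main obstacle is justifying the interchange of the starred summation with differentiation in the first equality. This is resolved by isolating the unique conditionally convergent contribution, namely $\xi_F'/\xi_F(s)$ from Proposition \ref{log der xi as series}, and invoking Lemma \ref{order one} to ensure that all remaining terms are absolutely convergent because $\sum_\rho |s-\rho|^{-j}<\infty$ for $j\ge 2$ and $s\notin Z(F)$. Everything else reduces to bookkeeping with Leibniz's rule, the binomial theorem, and the short combinatorial identity recorded above.
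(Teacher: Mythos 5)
Your proof is correct, and for the first equality it is essentially the paper's argument: both expand $\frac{d^n}{ds^n}\bigl(s^{n-1}\log\xi_F(s)\bigr)$ by Leibniz, identify the $j=1$ term with the $\ast$-convergent series of Proposition \ref{log der xi as series}, handle $j\ge 2$ by termwise differentiation justified by absolute convergence (order one, Lemma \ref{order one}), and reassemble via the binomial identity $\sum_{j=1}^n\binom{n}{j}(-1)^{j-1}x^j=1-(1-x)^n$; the only cosmetic difference is that the paper routes the $j=1$ term through the Hadamard product and \eqref{b_f as sum} rather than quoting the proposition directly. For the second equality your route genuinely diverges from the paper's. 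The paper substitutes the Hadamard product into $\frac{d}{ds}\log\xi_F\left(\frac{1}{1-s}\right)$, expands each zero's contribution as a geometric series in $s-z_0$, and reads off $d_F(n-1,z_0)=\tau^n\lambda_F(n,\tau)$ directly — at the cost of a somewhat delicate step where an absolutely convergent double sum is split into two $\ast$-convergent pieces. You instead work locally: Taylor-expand $\log\xi_F$ at $\tau$ (legitimate since $\tau\notin Z(F)$), push the expansion through the substitution $u=\tau/(1-\tau w)$ to get $[w^n]\log\xi_F(1/(1-z))=\sum_k\binom{n-1}{k-1}\tau^{n+k}(\log\xi_F)^{(k)}(\tau)/k!$, and match this against the Leibniz expansion of the middle expression via $\binom{n}{j}/(j-1)!=n\binom{n-1}{j-1}/j!$. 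This reduces the second equality to the first and to elementary power-series bookkeeping, avoiding any further contact with the conditionally convergent zero sums; what you give up is the paper's direct identification of the coefficients $d_F(n,z_0)$ as explicit $\ast$-sums over $Z(F)$, but that identification is not needed for the statement. Both arguments are sound.
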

\begin{proof}
For the first equality in \eqref{tauLiOtherDef} we may apply formula for the $n$-th derivative of the product of two analytic functions to get
\begin{eqnarray}\label{tauLidef2}
&&\frac{\tau}{(n-1)!}\left[\frac{d^n}{ds^n}\left(s^{n-1}\log \xi_F(s)\right)\right]_{s=\tau}\\
&=&\sum_{k=0}^{n-1}\binom{n}{k}\frac{\tau^{n-k}}{(n-1-k)!}\left[\frac{d^{n-k}}{ds^{n-k}}\log \xi_F(s)\right]_{s=\tau}\notag,
\end{eqnarray}
then to calculate derivatives of the logarithm of the function $\xi_F$, we may use the Hadamard product representation
\begin{equation} \label{hadamard product for xi}
\xi_F(s)=\xi_F(0) e^{b_F s} \prod_{\rho\in Z(F)} \left( 1- \frac{s}{\rho} \right) e^{s/\rho},
\end{equation}
where $b_F=\xi_F' (0) /\xi_F(0)$. Namely, \eqref{hadamard product for xi} implies that
$$
\frac{\xi'_{F}}{\xi_F}(s)=b_F+\sum_{\rho \in Z(F)}\left(\frac{1}{s-\rho}+\frac{1}{\rho}\right),
$$
where the sum is absolutely and uniformly convergent on any closed bounded region not containing non-trivial zeros of $F$. Relation \eqref{b_f as sum} and differentiation term by  term (which is justified by the uniform convergence of the above sum) imply
$$
\left[\frac{d^{n-k}}{ds^{n-k}}\log\xi_F(s)\right]_{s=\tau}=-\left.\sum_{\rho \in Z(F)}\right.^{\ast}\frac{(n-k-1)!}{(\rho-\tau)^{n-k}},
$$
where the convergence is actually absolute for $n-k\geq 2$.
After inserting the last expression in \eqref{tauLidef2} and interchanging the sums, using the binomial theorem completes the first part of the theorem.

For the second equality in \eqref{tauLiOtherDef} we will use an analogous procedure as in \cite[Lemma 2.1.2]{Dr12}. The procedure is based on the Hadamard product representation and the uniqueness of the power series expansions. We interpretate certain expressions as geometric series. Basically, starting with \eqref{hadamard product for xi} we obtain
\begin{align*}
&\frac{d}{ds}\log \xi_F\left(\frac{1}{1-s}\right) = \frac{b_F}{(1-s)^{2}}+\sum_{\rho \in Z(F)}\left(\frac{1}{(1-s)(1-\rho+s\rho)}+\frac{1}{\rho(1-s)^{2}}\right)
\\=&\frac{b_F}{(1-s)^{2}}-\sum_{\rho \in Z(F)}\left(\frac{\tau^{2}}{\rho-\tau}
\frac{1}{1-\frac{\rho\tau(s-z_{0})}{\rho-\tau}}\frac{1}{1-\tau(s-z_{0})}
-\frac{1}{\rho(1-s)^{2}}\right)\\
=&\frac{b_F}{(1-s)^{2}}+\sum_{n=0}^{\infty}\sum_{\rho \in Z(F)}\left(\tau^{n+1}\left(1-\left(\frac{\rho}{\rho-\tau}\right)^{n+1}\right)
(s-z_{0})^{n}+\frac{2^{-n-1}}{\rho(1-s)^{2}}\right),
\end{align*}
for $s$ sufficiently close to $z_0=1-1/\tau$.
Now, by the fact that $b_F$ can be represented as $\ast-$convergent sum \eqref{b_f as sum}, splitting absolutely convergent sum in the last equation into two $\ast-$convergent sums imply
$$\frac{d}{ds}\log \xi_F\left(\frac{1}{1-s}\right)= \sum_{n=0}^{\infty}\tau^{n+1}\lambda_{F}(n+1,\tau)
(s-z_{0})^{n}.$$
The uniqueness of the series expansions and \eqref{defcoefdF} complete the proof.
\end{proof}
\begin{remark} \rm
Simple calculations preformed on the expression \eqref{tauli} from the definition of $\tau-$Li coefficients, including the binomial theorem and \eqref{b_f as sum} show that the $\tau-$Li coefficients attached to $F\in \shfs$ such that $0\not\in Z(F)$ can also be represented as
$$
\lambda_F(n,\tau)=n\tau \frac{\xi_F '}{\xi_F}(0)+ n\tau^2\sum_{\rho \in Z(F)} \frac{1}{\rho(\tau-\rho)} - \sum_{k=2}^{n} \binom{n}{k}\sum_{\rho \in Z(F)}\left(\frac{\tau}{\rho-\tau} \right)^k.$$
\end{remark}
The following theorem gives an arithmetic formula for the computation of the $\tau-$Li coefficients.
\begin{theorem} \label{thm: arithmetic f-la 1} Let $F\in \shfs$ and  $\tau\in(\sigma_0,2\sigma_0]$. For every positive integer $n$ we have
\begin{eqnarray*}
\lambda_F(n,\tau) &=&\sum_{i=2M+\delta(\sigma_1)+1}^{N}
m_i\left(1-\left(\frac{\sigma_1-\overline{s_i}}{\sigma_1-\tau-\overline{s_i}}\right)^n\right)\\
&+&
\sum_{i=1}^{N} m_i\left(1-\left(\frac{s_i}{s_i-\tau}\right)^n\right)
- \tau\sum_{m=2}^{\infty}\frac{c_F(m)}{m^{\tau}} L_{n-1}^1 (\tau \log m)\\&+&\sum_{k=1}^n\binom{n}{k}\frac{(\tau\lambda_j)^k}{(k-1)!}\sum_{j=1}^r \Psi^{(k-1)}(\lambda_j \tau+\mu_j)+n\tau \log Q_F
\end{eqnarray*}
where $\Psi(s)=\frac{\Gamma'}{\Gamma}(s)$, and $L_{n}^k$ denotes the associated Laguerre polynomial.
\end{theorem}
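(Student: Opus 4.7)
The strategy is to start from the first representation of $\lambda_F(n,\tau)$ in \eqref{tauLiOtherDef}, namely
$$
\lambda_{F}(n,\tau)= \frac{\tau}{(n-1)!}\left[\frac{d^n}{ds^n}\left(s^{n-1}\log \xi_F(s)\right)\right]_{s=\tau},
$$
and expand using the Leibniz rule. Since $s^{n-1}$ is a polynomial of degree $n-1$, the $k=n$ term vanishes and after a short reindexing (letting $j=n-k$) one obtains
$$
\lambda_F(n,\tau)=\sum_{k=1}^{n}\binom{n}{k}\frac{\tau^k}{(k-1)!}\bigl[\log\xi_F\bigr]^{(k)}(\tau).
$$
Hence everything reduces to computing the successive derivatives of $\log\xi_F$ at $s=\tau$ for each factor in the defining product \eqref{xi_F}.

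Next I would split $\log\xi_F(s)$ into the four kinds of summands provided by \eqref{xi_F}: the piece $s\log Q_F$, the gamma factors $\log\Gamma(\lambda_j s+\mu_j)$, the logarithms $m_i\log(s-s_i)$ coming from all poles $s_1,\dots,s_N$, the logarithms $m_i\log(\sigma_1-s-\overline{s_i})$ coming from the ``reflected'' poles with index $i\geq 2M+\delta(\sigma_1)+1$, and finally $\log F(s)$ itself. The contributions of the first two are immediate: the linear term in $s$ gives only a $k=1$ derivative and yields $n\tau\log Q_F$, while the gamma factors give the sum $\sum_{k=1}^n\binom{n}{k}\frac{(\tau\lambda_j)^k}{(k-1)!}\sum_{j=1}^r \Psi^{(k-1)}(\lambda_j\tau+\mu_j)$. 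For the pole terms one uses $\frac{d^k}{ds^k}\log(s-s_i)=(-1)^{k-1}(k-1)!/(s-s_i)^k$ (and analogously for the reflected factor, picking up an extra sign from the chain rule). After inserting these formulas the binomial theorem $\sum_{k=0}^n\binom{n}{k}x^k=(1+x)^n$ collapses each geometric-type sum, producing exactly the two pole contributions $\sum_i m_i\bigl(1-(s_i/(s_i-\tau))^n\bigr)$ and $\sum_{i\ge 2M+\delta(\sigma_1)+1} m_i\bigl(1-((\sigma_1-\overline{s_i})/(\sigma_1-\tau-\overline{s_i}))^n\bigr)$ displayed in the theorem.

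The one computation that requires genuine care is the contribution of $\log F(s)$. For $\re s>\sigma_0$, axiom (v') gives $(F'/F)(s)=-\sum_{m\ge 2}c_F(m)m^{-s}$, and since $\tau\in(\sigma_0,2\sigma_0]$ the series converges absolutely there and may be differentiated term by term. Therefore
$$
[\log F]^{(k)}(\tau)=(F'/F)^{(k-1)}(\tau)=(-1)^k\sum_{m=2}^\infty c_F(m)\frac{(\log m)^{k-1}}{m^\tau},\qquad k\ge 1,
$$
so the corresponding contribution to $\lambda_F(n,\tau)$ equals
$$
-\tau\sum_{m=2}^\infty\frac{c_F(m)}{m^\tau}\sum_{j=0}^{n-1}(-1)^j\binom{n}{j+1}\frac{(\tau\log m)^j}{j!}
$$
after reindexing $j=k-1$. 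The inner sum is precisely the associated Laguerre polynomial $L_{n-1}^{1}(\tau\log m)=\sum_{j=0}^{n-1}(-1)^j\binom{n}{n-1-j}\frac{x^j}{j!}$ evaluated at $x=\tau\log m$, using $\binom{n}{n-1-j}=\binom{n}{j+1}$. Recognizing this Laguerre polynomial is the main (and really only) non-routine step of the argument.

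Assembling the four contributions yields the claimed formula. The only thing to verify before summing is that the rearrangements are legal: the Dirichlet series for $F'/F$ converges absolutely at $\tau>\sigma_0$, and the remaining manipulations involve only finitely many terms, so no convergence issues arise. Consequently the argument is essentially a bookkeeping of derivatives, with the Laguerre-polynomial identification being the place where extra attention is needed.
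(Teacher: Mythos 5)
Your proposal is correct and follows essentially the same route as the paper: expand $\frac{\tau}{(n-1)!}\left[\frac{d^n}{ds^n}\left(s^{n-1}\log\xi_F(s)\right)\right]_{s=\tau}$ via Leibniz into $\sum_{k=1}^n\binom{n}{k}\frac{\tau^k}{(k-1)!}[\log\xi_F]^{(k)}(\tau)$, split $\log\xi_F$ into the $Q_F$, gamma, pole, reflected-pole and $\log F$ pieces, collapse the pole sums by the binomial theorem, and identify the Dirichlet-series contribution as $L_{n-1}^1(\tau\log m)$. Your bookkeeping of the alternating sign $(-1)^j$ in the Laguerre identification is in fact slightly more careful than the paper's intermediate display.
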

\begin{proof} We start with the expression \eqref{tauLiOtherDef} for the $\tau-$Li coefficients given in terms of the derivatives, so
\begin{eqnarray*}
\lambda_F(n,\tau)
&=& \sum_{k=1}^n\binom{n}{k}\frac{\tau^k}{(k-1)!}\left[\frac{d^k}{ds^k}\left(
\log F(s)\right.\right.\\&+&s\log Q_F +\sum_{j=1}^r\log \Gamma(\lambda_j s+\mu_j)\\
&+&\left.\left.\sum_{i=1}^{N} m_i \log (s-s_i)
+ \sum_{i=2M + \delta(\sigma_1)+1}^N m_i\log (\sigma_1-s-\overline{s_i}) \right)\right]_{s=\tau}.
\end{eqnarray*}
Let us first evaluate the derivatives. Axiom (v') yields that
$$
\frac{d^k}{ds^k}\log F(s)
=-\frac{d^{k-1}}{ds^{k-1}}\sum_{m=2}^{\infty}\frac{c_F(m)}{m^s}=(-1)^k\sum_{m=2}^{\infty}(\log m)^{k-1}\frac{c_F(m)}{m^s},$$
for $s > \sigma_0$.

The contribution coming from the term $s\log Q_F$ is $\log Q_F$ if $k=1$ and zero otherwise. Furthermore, we have
$$
\frac{d^k}{ds^k} m_i\log(s-s_i)=-m_i\frac{(k-1)!}{(s_i-s)^k}$$
and
$$\frac{d^k}{ds^k}m_i\log(\sigma_1-s-\overline{s_i})=-m_i\frac{(k-1)!}{(\sigma_1-s-\overline{s_i})^{k}}.
$$
The term with the gamma functions is the only one remaining. We have
$$
\frac{d^k}{ds^k}\sum_{j=1}^r\log\Gamma(\lambda_j s+\mu_j)=\sum_{j=1}^r\lambda_j^k \Psi^{(k-1)}(\lambda_j s+\mu_j).
$$
Substituting now $s=\tau$, and having in mind that, by definition of the associated Laguerre polynomials, we get
\begin{eqnarray*}
&&\sum_{k=1}^n\binom{n}{k}\frac{\tau^k}{(k-1)!}(-1)^k\sum_{m=2}^{\infty} (\log m)^{k-1}\frac{c_F(m)}{m^{\tau}} \\
&=&-\tau \sum_{m=2}^{\infty}\frac{c_F(m)}{m^{\tau}}\sum_{j=0}^{n-1} \frac{1}{j!}\binom{n}{n-1-j}(\tau \log m) ^j\\
&=& -\tau \sum_{m=2}^{\infty}\frac{c_F(m)}{m^{\tau}}L_{n-1}^1(\tau \log m),
\end{eqnarray*}
plugging everything into the formula for $\lambda_F(n,\tau)$, after simplification, we obtain the desired formula.
\end{proof}

We may now derive an arithmetic formula for $\tau\in[\sigma_1,2\sigma_0]$.
\begin{theorem} \label{thm:arith formula 2} Let $F\in \shfs$ and $\tau\in[\sigma_1,2\sigma_0] \setminus \{\sigma_1 - \overline{s_i}\, : \, i=2M+\delta(\sigma_1) +1, ..., N\}$. For every positive integer $n$, we have
\begin{eqnarray*}
\lambda_F(n,\tau)&=&\sum_{\substack{i=1\\{s_i\ne \tau}}}^{N} m_i\left(1-\left(\frac{s_i}{s_i-\tau}\right)^n\right)\\&+&\sum_{i=2M+1+\delta(\sigma_1) }^N m_i\left(1-\left(\frac{\sigma_1-\overline{s_i}}{\sigma_1-\tau-\overline{s_i}}\right)^n\right) +n\tau \log Q_F
\\&+&\sum_{k=1}^n\binom{n}{k}\tau^kb_{k-1}+\sum_{k=1}^n\binom{n}{k}\frac{(\tau\lambda_j)^k}{(k-1)!}\sum_{j=1}^r \Psi^{(k-1)}(\lambda_j \tau+\mu_j)
\end{eqnarray*}
where  $\frac{F'}{F}(s)=\sum\limits_{\ell =-1}^{\infty}b_{\ell}(s-\tau)^{\ell}$ is the Laurent expansion at $s=\tau$. In particular, if $m$ is the order of the pole of the function $F$ at $s=\tau$, then $b_{-1}=-m$, and if the function $\frac{F'}{F}(s)$ does not have a pole at $s=\tau$, then $b_{-1}=0$.
\end{theorem}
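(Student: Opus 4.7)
The plan is to follow the general structure of the proof of Theorem \ref{thm: arithmetic f-la 1}, but to replace the step that uses the Dirichlet series representation of $F'/F$ (valid only for $\re s>\sigma_0$) by the Laurent expansion of $F'/F$ at $s=\tau$. The starting point is the identity
\[
\lambda_F(n,\tau)=\sum_{k=1}^{n}\binom{n}{k}\frac{\tau^k}{(k-1)!}\left[\frac{d^k}{ds^k}\log\xi_F(s)\right]_{s=\tau},
\]
obtained from the first equality in \eqref{tauLiOtherDef} via the Leibniz rule, exactly as at the opening of the proof of Theorem \ref{thm: arithmetic f-la 1}. The hypotheses $0,\tau\notin Z(F)$ together with $\tau\notin\{\sigma_1-\overline{s_i}:i\geq 2M+\delta(\sigma_1)+1\}$ guarantee $\xi_F(\tau)\neq 0$, so $\log\xi_F$ is holomorphic near $\tau$ and the right-hand side is well defined.

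Next, I would split
\[
\log\xi_F(s)=\log F(s)+s\log Q_F+\sum_{j=1}^{r}\log\Gamma(\lambda_j s+\mu_j)+\sum_{i=1}^{N}m_i\log(s-s_i)+\sum_{i=2M+\delta(\sigma_1)+1}^{N}m_i\log(\sigma_1-s-\overline{s_i})
\]
and compute the contribution of each summand to the $k$-th derivative at $\tau$. The term $s\log Q_F$ contributes $\log Q_F$ only for $k=1$, producing $n\tau\log Q_F$ after summation; the gamma factors produce $\lambda_j^k\Psi^{(k-1)}(\lambda_j\tau+\mu_j)$; each summand $m_i\log(\sigma_1-s-\overline{s_i})$ yields, via the binomial theorem, the contribution $m_i\bigl(1-((\sigma_1-\overline{s_i})/(\sigma_1-\tau-\overline{s_i}))^n\bigr)$; and each $m_i\log(s-s_i)$ with $s_i\neq\tau$ likewise yields $m_i\bigl(1-(s_i/(s_i-\tau))^n\bigr)$. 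These manipulations are identical to the corresponding steps in the proof of Theorem \ref{thm: arithmetic f-la 1}.

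The only genuinely new ingredient is the contribution of $\log F(s)$, which I would pair with $m\log(s-\tau)$, where $m$ denotes the order of $F$ as a pole at $\tau$ (so $m=m_{i_0}$ if $s_{i_0}=\tau$ for some $i_0$, and $m=0$ otherwise). From the Laurent expansion $\frac{F'}{F}(s)=\sum_{\ell\geq -1}b_\ell(s-\tau)^\ell$ one has $b_{-1}=-m$ by construction, so that $\frac{F'}{F}(s)+\frac{m}{s-\tau}=\sum_{\ell\geq 0}b_\ell(s-\tau)^\ell$ is holomorphic at $\tau$. Differentiating $k-1$ times and evaluating at $\tau$ gives
\[
\left[\frac{d^k}{ds^k}\bigl(\log F(s)+m\log(s-\tau)\bigr)\right]_{s=\tau}=(k-1)!\,b_{k-1},
\]
so, after multiplication by $\binom{n}{k}\tau^k/(k-1)!$ and summation over $1\leq k\leq n$, the joint contribution of $\log F$ and of $m\log(s-s_{i_0})$ (when $s_{i_0}=\tau$) is precisely $\sum_{k=1}^{n}\binom{n}{k}\tau^k b_{k-1}$. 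If $\tau$ is not a pole of $F$ then $b_{-1}=0$, no pairing is needed, and the sum over indices with $s_i\neq\tau$ in the theorem is simply the full sum.

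The main technical point, and the only real departure from the argument for Theorem \ref{thm: arithmetic f-la 1}, is precisely this pairing: at a pole of $F$ the individual $k$-th derivatives of $\log F$ and of $\log(s-s_{i_0})$ are infinite, but the Laurent expansion of $F'/F$ regroups their singular parts into the finite number $(k-1)!\,b_{k-1}$. Once this is in hand, collecting the five contributions produces the formula claimed in the theorem.
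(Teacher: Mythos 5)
Your proposal is correct and follows essentially the same route as the paper's proof: it reduces everything except the $\log F(s)$ term to the computations already done for Theorem \ref{thm: arithmetic f-la 1}, and handles the new term by pairing $\log F(s)$ with $m\log(s-\tau)$ so that the Laurent expansion of $F'/F$ at $\tau$ yields $(k-1)!\,b_{k-1}$ for the $k$-th derivative. This is exactly the argument given in the paper.
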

\begin{proof} Let us again start with the expression for the $\tau-$Li coefficients in terms of the derivatives given in \eqref{tauLiOtherDef}. We have
\begin{eqnarray} \label{lambda arith f-la}
\lambda_F(n,\tau)& =&
 \sum_{k=1}^n\binom{n}{k}\frac{\tau^k}{(k-1)!}\left[\frac{d^k}{ds^k}\log \left(\prod\limits_{i=1}^{N} (s-s_i)^{m_i}\right.\right.\\ &&\prod\limits_{i=2M+1+\delta(\sigma_1)}^{N} (\sigma_1 - s-\overline{s_i})^{m_i}
 \left. \left. F(s) Q_{F}^{s}\prod_{j=1}^{r}\Gamma (\lambda _{j}s+\mu _{j})\right)\right]_{s=\tau}. \notag
\end{eqnarray}
Let us now concentrate for a moment on the part $(s-\tau)^{m}F(s)$, where $m$ is the order of the  pole of the function $F(s)$ at $s=\tau$, as everything else can be treated as in the proof of Theorem \ref{thm: arithmetic f-la 1}. Let
$$
\frac{F'}{F}(s)=\sum_{\ell = -1}^{\infty} b_{\ell}(s-\tau)^{\ell}
$$
be the Laurent expansion of the logarithmic derivative of $F$ at $s=\tau$. If $\tau$ is a pole of order $m$ of the function $F$, then $b_{-1}=-m$, otherwise $m=0$ and $b_{-1}=0$. Hence
$$
\frac{d}{ds}\left(\log \left((s-\tau)^{m}F(s)\right)\right)=\frac{m}{s-\tau}+\frac{F'}{F}(s)=\sum_{\ell =0}^{\infty}b_{\ell}(s-\tau)^{\ell},
$$
and thus
\begin{eqnarray*}
\left[\frac{d^k}{ds^k}\left(\log \left((s-\tau)^{m}F(s)\right)\right)\right]_{s=\tau}
=\left[\frac{d^{k-1}}{ds^{k-1}}\left(\frac{m}{s-\tau}+\frac{F'}{F}(s)\right)\right]_{s=\tau}
=(k-1)!b_{k-1}.
\end{eqnarray*}
This completes the proof.
\end{proof}
\begin{remark}\rm
In the statement of Theorem \ref{thm:arith formula 2}, for the sake of simplicity, we assumed that $\tau \neq \sigma_1 - s_i$, for real $s_i$, when $i=2M+\delta(\sigma_1) +1,\ldots, N$. Namely, if $\tau = \sigma_1 - s_i$, for $i=2M+\delta(\sigma_1) +1,\ldots, N$ is a pole of the function $\gamma_F(s) = \prod\limits_{j=1}^{r} \Gamma(\lambda_j s + \mu_j)$ (see Remark \ref{rem: triv. zeros}), then the last sum on the right-hand side of \eqref{lambda arith f-la} is not well defined. One could overcome this issue by grouping together factors $(\tau-s)^{m_i}$ and $ \gamma_F(s)$ and computing the logarithmic derivative of the product $(\tau-s)^{m_i}\gamma_F(s)$ after one applies the functional equation for the gamma function. In this case one obtains a formula similar to \eqref{lambda arith f-la}, with $m_i$ terms being different in the last sum and some additional terms arising from the application of the functional equation. We omit this case, due to a very complicated notation and the fact that those values of $\tau$ are not of high importance in the $\tau-$Li criterion.
\end{remark}
\section{Numerical computations}

Numerical computations for the $\tau$-Li coefficients can be done using definition \eqref{tauliDef}. Obviously, for the numerical evaluation truncation of the sum over zeros needs to be done, which will produce some error term.
Additionally, computations can be done using only approximate values of zeros of corresponding functions up to some error of the given size, which will produce another error term.
Our computations are conducted for the product of suitably shifted Riemann zeta functions from the Example \ref{ExProdzeta}. In the following propositions we obtain the bounds for both error terms in this special case.
\begin{proposition}\label{PropEstimate}
Let $F(s)=\prod\limits_{i=1}^{K}\zeta(s-\alpha_i)\zeta(s+\alpha_i)$, where $\alpha_i$ are positive real numbers and let $T \geq 319$. Then,
$$
\lambda_{F}(n,\tau)=\lambda^*_{F}(n,\tau, T)+E(n,\tau,T),$$
where
$$\lambda^*_{F}(n,\tau, T)=\sum_{\begin{subarray}{c}\rho\in Z(F)\\\abs{\im{\rho}}\leq T\end{subarray}}\left(1-\left(\frac{\rho}{\rho-\tau}\right)^n\right)$$
and
\begin{eqnarray*}
   \abs{E(n, \tau, T)}&<& \frac{1}{\pi} \left(KT(4\log T+8\log 2)\left(1+\frac{\tau}{T}\right)^n
-KT(4\log T+8\log 2) \right)\\&-&\frac{1}{\pi}Kn\tau (4\log T+8\log 2)
   + \frac{15}{2}Kn\tau A \frac{\log T}{T},
\end{eqnarray*}
where $A=\max\limits_{i=1,\ldots,K,\, a\in (0,1)}\{|a+\alpha_i-\tau|,|a-\alpha_i-\tau|\}$.
\end{proposition}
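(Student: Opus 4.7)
The plan is to split $E(n,\tau,T)=\sum^{*}_{|\gamma|>T}(1-(\rho/(\rho-\tau))^{n})$ through the binomial identity
\[
1-\Bigl(\tfrac{\rho}{\rho-\tau}\Bigr)^{n}=-\sum_{k=1}^{n}\binom{n}{k}\Bigl(\tfrac{\tau}{\rho-\tau}\Bigr)^{k}.
\]
The tail of $k\ge 2$ sums converges absolutely (because $|\rho-\tau|\ge|\gamma|$ forces $|\gamma|^{-k}$ decay), whereas the $k=1$ tail converges only in the $*$-sense. These two pieces match, respectively, the first summand and the $\tfrac{15}{2}Kn\tau A\log T/T$ summand on the right-hand side of the proposition, so I would treat them separately.

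For the $k\ge 2$ contribution, the elementary bound $|\tau/(\rho-\tau)|\le \tau/|\gamma|$ yields $\bigl|\sum_{k=2}^{n}\binom{n}{k}(\tau/(\rho-\tau))^{k}\bigr|\le (1+\tau/|\gamma|)^{n}-1-n\tau/|\gamma|$. For each non-trivial zero $\rho_\zeta$ of $\zeta$ the function $F$ has $2K$ zeros $\rho_\zeta\pm\alpha_{i}$ with the same imaginary part, so the $F$-tail is a simple multiple of a $\zeta$-tail. Abel (partial) summation applied to $\sum_{\gamma_\zeta>T}[(1+\tau/\gamma_\zeta)^{n}-1-n\tau/\gamma_\zeta]$, together with an explicit Backlund-type upper bound on $N_\zeta(t)$ (valid for $t\ge 319$, which is what forces that threshold), produces a bound proportional to $T((1+\tau/T)^{n}-1)-n\tau=\sum_{k=2}^{n}\binom{n}{k}\tau^{k}T^{1-k}$, with the advertised prefactor $(4\log T+8\log 2)K/\pi$.

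For the $k=1$ contribution, the realness of the shifts $\alpha_i$ makes $\xi_F$ invariant under $s\mapsto\bar s$, so zeros of $F$ come in conjugate pairs. Pairing $\rho$ with $\bar\rho$ rewrites the $*$-convergent sum as the absolutely convergent
\[
\sum^{*}_{|\gamma|>T}\frac{1}{\rho-\tau}=\sum_{\gamma>T,\,\rho\in Z^{+}(F)}\frac{2(\beta-\tau)}{|\rho-\tau|^{2}},
\]
which is bounded by $4KA\sum_{\gamma_\zeta>T}\gamma_\zeta^{-2}$ via $|\beta-\tau|\le A$ and $|\rho-\tau|\ge\gamma$. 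A second Abel-summation step with the same $N_\zeta$-bound gives $\sum_{\gamma_\zeta>T}\gamma_\zeta^{-2}\ll \log T/T$; multiplying by the outer factor $n\tau$ produces the $\tfrac{15}{2}Kn\tau A\log T/T$ term.

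Adding the two contributions gives the stated inequality. The conceptual skeleton is just standard Abel summation, so the main obstacle is not the strategy but the \emph{calibration of constants}: the factor $(4\log T+8\log 2)/\pi$ and the number $\tfrac{15}{2}$ have to be squeezed out of a sharp, fully explicit bound on $N_\zeta(t)$ for $t\ge 319$, by carefully tracking signs, absolute values, and the passage from zeros of $F$ to zeros of $\zeta$ (the factor $4K$ from the conjugate-paired $2K$ translates) through both Abel-summation computations. This numerical bookkeeping is where essentially all of the work lies.
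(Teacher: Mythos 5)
Your proposal is correct and follows essentially the same route as the paper: binomial expansion of $1-(\rho/(\rho-\tau))^{n}$, reduction from zeros of $F$ to shifted zeros of $\zeta$, conjugate pairing to make the $k=1$ tail absolutely convergent with the factor $|\beta-\tau|\le A$, and explicit zero-counting bounds to calibrate the constants (the condition $T\ge 319$ indeed comes from making the Karatsuba--Korolev error term in $N_\zeta(t)$ negligible). The only cosmetic difference is that the paper estimates the $k\ge 2$ tails by a dyadic decomposition over the ranges $2^{h}T<|\im\rho|\le 2^{h+1}T$ rather than by Abel summation, which yields the same $\frac{1}{\pi}(4\log T+8\log 2)T^{1-j}$ bound.
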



\begin{proof}
Definition of the function $F$ gives us possibility to write attached $\tau$-Li coefficients as a sum over zeros of the Riemann zeta function, thus the error term to be estimated can be written in the following form
\begin{eqnarray}\label{En}
   E(n, \tau, T)&=&\sum_{i=1}^{K}\sum_{\begin{subarray}{c}\rho\in Z(\zeta)\\\abs{\im{\rho}}>T\end{subarray}}
   \left(1-\left(1+\frac{\tau}{\rho+\alpha_i-\tau}\right)^n+1-\left(1+\frac{\tau}{\rho-\alpha_i-\tau}\right)^n\right)\notag\\
   &=& - \sum_{i=1}^{K}\sum_{j=1}^{n}{n\choose j}\tau^j\sum_{\begin{subarray}{c}\rho\in Z(\zeta)\\\abs{\im{\rho}}>T\end{subarray}}
   \left(\frac{1}{(\rho+\alpha_i-\tau)^j}+\frac{1}{(\rho-\alpha_i-\tau)^j}\right)
\end{eqnarray}
where $Z(\zeta)$ denotes set of zeros of the Riemann zeta function.

Let us first estimate terms for $j\geq 2$ appearing in the above sum. We will use dyadic splits, i.e. separate zeros in the regions
 $2^hT< \abs{\im \rho}\leq 2^{h+1}T$, for $h=0,1,\ldots$.
 The number of zeros of the Riemann zeta function up the the height $T$, i.e. zeros $\rho$ such that $0\leq\im\rho\leq T$, is \cite[p. 465]{KarKor}
  \begin{equation}\label{numZerosZeta}
  N(T)=\frac{T}{2\pi}\log\left(\frac{T}{2\pi}\right)-\frac{T}{2\pi}+8.9\theta \log T,
 \end{equation}
where $|\theta|<1$. We may now calculate the number of zeros in the region  $2^hT< \im \rho\leq 2^{h+1}T$
\begin{eqnarray*}
N(2^{h+1}T)-N(2^hT)&=&\frac{2^{h+1}T}{2\pi}\log\left(\frac{2^{h+1}T}{2\pi}\right)-\frac{2^{h+1}T}{2\pi}+8.9\theta_1 \log(2^{h+1}T)\\&-&\frac{2^hT}{2\pi}\log\left(\frac{2^hT}{2\pi}\right)+\frac{2^hT}{2\pi}-8.9\theta_2 \log(2^{h}T),
\end{eqnarray*}
where $\abs{\theta_1}<1$ and $\abs{\theta_2}<1$.

The contribution coming from terms $8.9\theta_1 \log(2^{h+1}T)$ and $8.9\theta_2\log(2^hT)$ is at most $8.9(\log(2^{h+1}T)+\log(2^hT))$. Furthermore,
\begin{eqnarray*}
&&\frac{2^{h+1}T}{2\pi}\log\left(\frac{2^{h+1}T}{2\pi}\right)-\frac{2^{h+1}T}{2\pi}-\frac{2^hT}{2\pi}\log\left(\frac{2^hT}{2\pi}\right)+\frac{2^hT}{2\pi} \\ &=&\frac{2^hT}{2\pi}\log(2^{h+1}T)-\frac{2^hT}{2\pi}\log\pi-\frac{2^hT}{2\pi}
\end{eqnarray*}
Clearly, we can bound the difference $N(2^{h+1}T)-N(2^hT)$ by $\frac{2^hT}{2\pi}\log(2^{h+1}T)$ if
$$
-\frac{2^hT}{2\pi}\log\pi-\frac{2^hT}{2\pi}+8.9(\log(2^{h+1}T)+\log(2^hT))<0.
$$
Using WolframAlpha, we easily derive this inequality to hold when $T\geq 319$.
Bounds determining the strip  $2^hT< \abs{\im \rho}\leq 2^{h+1}T$ give us bounds for the term in the sum. Namely, $\abs{\rho\pm\alpha_i-\tau}\geq\abs{\im\rho}$ and thus
$$\abs{\frac{1}{(\rho+\alpha_i-\tau)^j}+\frac{1}{(\rho-\alpha_i-\tau)^j}}<\frac{2}{2^{hj}T^j}.$$
This, together with the estimate of number of zeros, for $j>1$, implies
\begin{eqnarray}\label{estimatejgr1}
&&\abs{\sum_{\begin{subarray}{c}\rho\in Z(\zeta)\\\abs{\im{\rho}}>T\end{subarray}}
   \left(\frac{1}{(\rho+\alpha_i-\tau)^j}+\frac{1}{(\rho-\alpha_i-\tau)^j}\right)}
   \end{eqnarray}
$$
\leq\sum_{h=0}^{\infty}\sum_{\begin{subarray}{c}\rho\in Z(\zeta)\\2^hT<\abs{\im{\rho}}\leq 2^{h+1}T\end{subarray}}
   \abs{\frac{1}{(\rho+\alpha_i-\tau)^j}+\frac{1}{(\rho-\alpha_i-\tau)^j}}
$$
\begin{eqnarray*}
   &<&\frac{1}{2\pi}\sum_{h=0}^{\infty}2^{2-hj+h}T^{1-j}\log(2^{h+1}T)\notag\\
   &=&\frac{2}{\pi} T^{1-j}\log 2\sum_{h=0}^{\infty}\frac{h+1}{(2^{j-1})^h}+\frac{2}{\pi}T^{1-j}\log T\sum_{h=0}^{\infty} \frac{1}{(2^{j-1})^h}\notag\\
   &\leq& \frac{8}{\pi}T^{1-j}\log 2 +\frac{4}{\pi}T^{1-j}\log T.
\end{eqnarray*}
It is left to estimate the term for $j=1$, i.e. the term
$$
R_i=\sum_{\begin{subarray}{c}\rho\in Z(\zeta)\\\abs{\im{\rho}}>T\end{subarray}}
\left(\frac{1}{\rho+\alpha_i-\tau}+\frac{1}{\rho-\alpha_i-\tau}\right)
$$
for $i=1,\ldots,K$.
We will pair zeros $\rho=a+it$, $a\in\R$, $\abs{t}>T$ with $\overline{\rho}=a-it$, in the sum defining the term with $n=1$ to get
\begin{eqnarray*}
\abs{R_i} &\leq& 2 \sum_{\begin{subarray}{c}\rho\in Z(\zeta)\\\rho=\re{\rho} + it,\, t>T\end{subarray}}\left(\frac{\abs{\re{\rho}+\alpha_i-\tau}}{(\re{\rho}+\alpha_i-\tau)^2 + t^2}+\frac{\abs{\re{\rho}-\alpha_i-\tau}}{(\re{\rho}-\alpha_i-\tau)^2 + t^2}\right)\\
&\leq& 4A \sum_{\begin{subarray}{c}\rho\in Z(\zeta)\\\rho=\re{\rho} + it,\, t>T\end{subarray}}\left(\frac{1}{B^2 + t^2}\right)=4A \int\limits_{T}^{\infty} \frac{dN(t)}{t^2 + B^2}\leq 4A \int\limits_{T}^{\infty} \frac{dN(t)}{t^2},
\end{eqnarray*}
where $B=\min\limits_{i=1,\ldots,K,\, a\in (0,1)}\{|a+\alpha_i-\tau|,|a-\alpha_i-\tau|\}$.

By the same line of argumentation as above, we can bound the number of zeros $\rho$ with $0\leq \im \rho\leq T$ given by \eqref{numZerosZeta} to be at most
$\frac{5}{8}T\log T$ whenever $T \geq 14$, since then
\begin{equation}\label{N(T) bound}
\frac{T}{2\pi}\log\left(\frac{T}{2\pi}\right)-\frac{T}{2\pi}+8.9\theta\log T\leq \frac{5}{8}T\log T,
\end{equation}
and when $T<14$, $N(T)=0$. Therefore, for $T\geq 14$, using integration by parts we get
\begin{equation} \label{estimateR_i}
\abs{R_i} \leq 4A \cdot \frac{5}{4}\int\limits_{T}^{\infty} \frac{\log t dt}{t^2}= 5A\frac{\log T +1}{T}\leq \frac{15}{2}A\frac{\log T}{T},
\end{equation}
for all $i=1,\ldots,K$.
Inserting estimates \eqref{estimateR_i} and \eqref{estimatejgr1} into expression for the error term given by \eqref{En} yields
\begin{eqnarray*}\label{Enestimate}
   \abs{E(n, \tau,T)}&<&\frac{1}{\pi}\sum_{i=1}^{K}\sum_{j=2}^{n}{n\choose j}\tau^j(8 T^{1-j}\log 2+4T^{1-j}\log T)+\frac{15}{2}nK\tau A\frac{\log T}{T}\\
   &=&\frac{KT}{\pi}(4\log T+8\log 2)\left(1+\frac{\tau}{T}\right)^n- \frac{KT}{\pi}(4\log T+8\log 2)\\
   &-&\frac{1}{\pi}Kn\tau (4\log T+8\log 2)+\frac{15}{2}n K\tau A\frac{\log T}{T}
\end{eqnarray*}
and the proof is completed.
\end{proof}
\begin{proposition}\label{prop22}
Assume the zeros of the function $F\in\shfs$ are known up to an error of size $\vartheta$. Let the number of zeros of function $F$ up to the height $t$, i.e. such that $0\leq\im\rho\leq t$, denoted by $N_F(t)$ be bounded by
$$
N_F(t)\leq C_F t \log t
$$
for some constant $C_F>0$ and all $t\geq t_0\geq 1$, where $t_0$ is the smallest positive imaginary part of the zeros of function $F$. Further assume that $\tau \geq 2 \max\limits_{\rho \in Z(F)} \{\re \rho\}$ and that the zeros are symmetric with respect to the $x$-axis. Then the error $\E_{app}(n,\tau,T)$ between the computation of $\lambda_{F}^{\star}(n,\tau,T)$ using actual zeros and the computational approximations is such that
$$
\abs{\E_{app}(n,\tau,T)}\leq 4C_F\frac{\vartheta n\tau(1+\log t_0)}{t_0(1-\vartheta)}.
$$

\end{proposition}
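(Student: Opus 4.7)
The plan is to bound the error zero-by-zero via the mean value theorem and then aggregate using the counting hypothesis $N_F(t)\le C_F t\log t$. Write $f(z)=1-\left(\frac{z}{z-\tau}\right)^n$ so that each summand of $\lambda_F^\ast(n,\tau,T)$ equals $f(\rho)$, and denote by $\tilde\rho$ the computational approximation of $\rho$, so that $|\rho-\tilde\rho|\le\vartheta$. A short calculation gives
\[
f'(z)=\frac{n\tau\, z^{n-1}}{(z-\tau)^{n+1}}=\frac{n\tau}{(z-\tau)^{2}}\left(\frac{z}{z-\tau}\right)^{n-1},
\]
so by the mean value theorem $|f(\rho)-f(\tilde\rho)|\le \vartheta\sup_{w\in[\rho,\tilde\rho]}|f'(w)|$, and hence the whole error is controlled by $\vartheta$ times a sum of these suprema.

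The first key step is to obtain a bound on $|f'(w)|$ that does not blow up with $n$. The hypothesis $\tau\ge 2\max_{\rho\in Z(F)}\re\rho$, combined with the reflection symmetry $\rho\mapsto\sigma_1-\overline{\rho}$ provided by the functional equation (iii'), yields $|\re\rho|\le\tau/2$, so that $|\rho|^{2}-|\rho-\tau|^{2}=2\tau\re\rho-\tau^{2}\le 0$. After checking that this inequality is stable under a perturbation of size $\vartheta$ (using $\vartheta<1\le t_0$), I conclude $|w/(w-\tau)|\le 1$ along $[\rho,\tilde\rho]$ and therefore $|f'(w)|\le n\tau/|w-\tau|^{2}$.

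The second step is to translate this into a bound involving $|\im\rho|$. Using $|w-\tau|\ge|\im w|\ge|\im\rho|-\vartheta$ together with $|\im\rho|\ge t_0\ge 1$, one gets $|\im\rho|-\vartheta\ge|\im\rho|(1-\vartheta)$ and hence
\[
|f(\rho)-f(\tilde\rho)|\le \frac{\vartheta\, n\tau}{|\im\rho|^{2}(1-\vartheta)^{2}}.
\]
Summing over zeros with $|\im\rho|\le T$, exploiting the $x$-axis symmetry to pair $\rho$ with $\overline{\rho}$, and applying integration by parts with $N_F(t)\le C_F t\log t$ yields
\[
\int_{t_0^{-}}^{\infty}\frac{dN_F(t)}{t^{2}}\le 2C_F\int_{t_0}^{\infty}\frac{\log t}{t^{2}}\,dt=\frac{2C_F(1+\log t_0)}{t_0},
\]
which combined with the per-zero estimate gives a bound of exactly the claimed shape, $4C_F\vartheta n\tau(1+\log t_0)/(t_0(1-\vartheta))$, up to how tightly one packages the $(1-\vartheta)$ factor.

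The main obstacle is the first step: the factor $(z/(z-\tau))^{n-1}$ in $f'$ would make any crude bound depend on $n$, and the assumption $\tau\ge 2\max_\rho\re\rho$ is precisely what keeps it at most one uniformly in $n$ along the whole segment $[\rho,\tilde\rho]$. Everything after that is a routine density-of-zeros estimate of the same kind as in Proposition \ref{PropEstimate}.
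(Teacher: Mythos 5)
Your proof is correct and takes essentially the same route as the paper's: a per-zero estimate in which the hypothesis $\tau\ge 2\max_{\rho\in Z(F)}\re\rho$ forces $\left|z/(z-\tau)\right|\le 1$ and thus keeps the bound uniform in $n$, followed by the $x$-axis symmetry and integration by parts against $N_F(t)\le C_F t\log t$, producing the factor $4C_F(1+\log t_0)/t_0$. The one difference is that the paper substitutes $x=z/(z-\tau)$ and integrates $nx^{n-1}$ over the straight segment joining the two image points (which lies in the closed unit disk by convexity), so the denominator appears as the product $|\rho-\tau|\,|\rho'-\tau|\ge t\cdot t(1-\vartheta)$ with only one factor paying the $\vartheta$ penalty, whereas your mean-value bound along $[\rho,\tilde\rho]$ yields $(1-\vartheta)^2$ — exactly the packaging issue you flagged, and it is immaterial for the application.
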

\begin{proof} Let $\rho'$ be the approximation of $\rho$. Then $|\rho-\rho'|\leq \vartheta$. Assumption posed on $\tau$ implies $\left|\frac{\rho}{\rho-\tau}\right|\leq 1$ and $ \left|\frac{\rho'}{\rho'-\tau}\right|\leq 1$. Let $S$ to be the segment joining the points $\frac{\rho}{\rho-\tau}$ and $\frac{\rho'}{\rho'-\tau}$. Notice now that the contribution for given zero can be bounded as follows
\begin{eqnarray*}
\left|\left(\frac{\rho}{\rho-\tau}\right)^n-\left(\frac{\rho'}{\rho'-\tau}\right)^n\right|&=&\left|\int_{\frac{\rho'}{\rho'-\tau}}^{\frac{\rho}{\rho-\tau}}nx^{n-1}dx\right|\leq \left|\frac{\rho}{\rho-\tau}-\frac{\rho'}{\rho'-\tau}\right|\max\limits_{x\in S}|x|^{n-1}\\ &\leq& \frac{\vartheta n\tau}{|\rho-\tau||\rho'-\tau|}\leq \frac{\vartheta n\tau}{t^2(1-\vartheta)},\end{eqnarray*}
where $t=\im\rho$ and
$\abs{t}\geq 1$.  Let us notice that the integrand is holomorphic function,
thus the integral is independent of path of integration, and we
can integrate over line segment $S$. Assumptions of the proposition
imply that $\abs{x}^{n-1}<1$ for all $x\in S$, thus the above bound holds true.

Bounding partial sum with the sum over all zeros, using integration by parts and the bound for number of zeros  we estimate the total contribution
coming from all zeros by
\begin{eqnarray*}
\abs{\E_{app}(n,\tau,T)}&\leq&\sum_{\substack{\rho\in Z(F)\\ \rho=\re\rho+it,|t|\leq T}}\frac{\vartheta n\tau}{t^2(1-\vartheta)}\leq \sum_{\substack{\rho\in Z(F)\\\rho=\re\rho+it}}\frac{\vartheta n\tau}{t^2(1-\vartheta)}\\
&\leq& \frac{4\vartheta n\tau C_F}{1-\vartheta}\int_{t_0}^{\infty}\frac{\log t dt}{t^2}= 4C_F\frac{\vartheta n\tau }{1-\vartheta} \frac{1 + \log t_0}{t_0}.\end{eqnarray*}
The proof is completed.
\end{proof}
\begin{remark}\rm
Proposition \ref{prop22} can be proved without assumption $t_0\geq1$, but in that case some bounds and corresponding integrals are different producing more complicated error term. This assumption is satisfied by the Riemann zeta functions, and thus functions used in our examples.
\end{remark}
\begin{example} \label{prop23} \rm
Let
$$
F(s)=\prod_{i=1}^K\zeta(s+\alpha_i)\zeta(s-\alpha_i),
$$
where $\alpha_i$ are real numbers. Assume that the zeros of the Riemann zeta function $\zeta(s)$ are known up to height $T$ with an error of at most size $\vartheta$ in the imaginary part and $\tau \geq 2 \max\limits_{\rho \in Z(F)} \{\re \rho\}$.

Equation \eqref{N(T) bound} implies that $N_F(t)\leq \frac{5}{4}K t\log t$, for all $t\geq t_0$, where $14.13<t_0<14.14$ therefore we may apply Proposition \ref{prop22} with $C_F=\frac{5}{4}K$ to deduce that the difference $\E_{app}(n, \tau, T)$ between the actual value of $\lambda^{\star}(n,\tau,T)$ and the computational value is such that
$$\abs{\E_{app}(n,\tau,T)}\leq 5K \frac{\vartheta n \tau}{1-\vartheta}\frac{1+\log (14.14)}{14.13} \leq 1.30 \frac{K\vartheta n \tau}{1-\vartheta}.$$

\end{example}


The error term $\E_{app}(n,\tau, T)$ in Proposition \ref{prop22} and Example \ref{prop23} is actually independent of the value of $T$, hence, in the sequel we will denote it by $\E_{app}(n,\tau)$.

The following proposition is the error estimate for our particular example.
\begin{proposition}\label{PropEx}
Consider the function
$$F(s)=\prod_{i=1}^{4}\zeta(s-\alpha_i)\zeta(s+\alpha_i),$$
where $\alpha_i=i,$ $i=1,2,3,4$. Assume the zeros of the Riemann zeta function up to height $T\geq 319$ are known with an error of size $\vartheta_0<4\cdot 10^{-9}$ in the imaginary part, and that the first nine zeros are known with an error of size $\vartheta_1<10^{-997}$ in the imaginary part. Then the errors $\E_{app}(n,\tau)$ in the computations done with the computational zeros (vs. actual zeros) are such that
$$\abs{\E_{app}(n,1)}\leq
6.7\cdot 10^{14} \frac{n\vartheta_1}{1-\vartheta_1}+7.2\cdot 10^6\frac{n\vartheta_0}{1-\vartheta_0}$$
for $\lambda_F^{\star}(n,1,T)$, when $n$ gets values from $1$ to $500$,
$$\abs{\E_{app}(n,5)}\leq
(19.5+9\cdot10^6) \frac{n\vartheta_0}{1-\vartheta_0}+ 4.9\cdot 10^{21}\frac{n \vartheta_1}{1-\vartheta_1}$$
for $\lambda_F^{\star}(n,5,T)$, when $n$ gets values from $1$ to $200$, and
$$\abs{\E_{app}(n,10)}\leq
52 \frac{n\vartheta_0}{1-\vartheta_0}
$$
for $\lambda_F^{\star}(n,10,T)$, when $n$ gets values from $1$ to $300$.
\end{proposition}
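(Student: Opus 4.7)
The plan is to apply the per-zero derivative bound derived in the proof of Proposition \ref{prop22},
\begin{equation*}
\left|\left(\tfrac{\rho}{\rho-\tau}\right)^{n}-\left(\tfrac{\rho'}{\rho'-\tau}\right)^{n}\right|\le\frac{n\tau\,|\rho-\rho'|}{|\rho-\tau|\,|\rho'-\tau|}\,\max_{x\in S}|x|^{n-1},
\end{equation*}
to each term of $\E_{app}(n,\tau)=\sum_{|\im\rho|\le T}\bigl[(\rho/(\rho-\tau))^{n}-(\rho'/(\rho'-\tau))^{n}\bigr]$, and then estimate $\max_{x\in S}|x|^{n-1}$ in a manner tailored to the size of $\tau$ relative to the real parts of the zeros of $F$. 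Throughout I will use that the non-trivial zeros of $\zeta$ lie in the strip $0<\re s<1$, so that the zeros of $F$ satisfy $\re\rho\in[-7/2,9/2]$.

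For $\tau=10$, I would invoke $\tau\ge 2\max_{\rho}\re\rho=9$, which forces $|\rho/(\rho-\tau)|\le 1$ and hence $\max_{x\in S}|x|^{n-1}\le 1$ at every zero. Then Example \ref{prop23} applies verbatim with $K=4$, producing the stated bound $52\,n\vartheta_0/(1-\vartheta_0)$.

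For $\tau\in\{1,5\}$ the ratio $|\rho/(\rho-\tau)|$ exceeds $1$ on zeros with $\re\rho>\tau/2$, so the natural step is to split the zeros of $F$ with $|\im\rho|\le T$ into Block~A, consisting of the $72$ zeros of the form $\rho_{\zeta,k}\pm\alpha_i$ with $k\in\{1,\ldots,9\}$ and $i\in\{1,2,3,4\}$ (controlled by the extreme precision $\vartheta_1$), and Block~B, the remaining zeros (controlled by $\vartheta_0$), writing $\E_{app}(n,\tau)=\E_{A}(n,\tau)+\E_{B}(n,\tau)$. On Block~B the minimal height is $t_{10}\approx 49.77$, and combining
$$
|\rho/(\rho-\tau)|^{2}=1+\frac{\tau(2\re\rho-\tau)}{(\re\rho-\tau)^{2}+(\im\rho)^{2}}
$$
with $|\re\rho|\le 9/2$ gives a bound $\max_{x\in S}|x|^{n-1}\le\exp(C_\tau n/t_{10}^{2})$ which is dominated by an explicit constant throughout the stated $n$-range; feeding this into the dyadic zero-counting argument of Proposition \ref{prop22} (with $N_F(t)\le\tfrac{5}{4}\cdot 4\cdot t\log t$) then yields the $\vartheta_0$-coefficients $7.2\cdot 10^{6}$ for $\tau=1$ and $19.5+9\cdot 10^{6}$ for $\tau=5$.

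On Block~A the imaginary parts $t_1,\ldots,t_9$ are known to more than $1000$ decimal digits (which is precisely what the choice $\vartheta_1<10^{-997}$ encodes), so I would tabulate the $72$ values of $\rho/(\rho-\tau)$ directly and bound $\max_{x\in S}|x|^{n-1}$ uniformly by $M_{A,\tau}^{n-1}$ with $M_{A,\tau}:=\max_{\rho\in A}|\rho/(\rho-\tau)|$, the maximum being attained at $\rho=9/2+i t_1$. Evaluating $M_{A,\tau}^{n-1}$ at the worst $n$ in the allowed range ($n=500$ for $\tau=1$, $n=200$ for $\tau=5$), multiplying by $n\tau\vartheta_1/|\rho-\tau|^2$ and summing over the $72$ elements of Block~A would produce the $\vartheta_1$-coefficients $6.7\cdot 10^{14}$ and $4.9\cdot 10^{21}$. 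The main obstacle will be precisely this geometric factor $M_{A,\tau}^{n-1}$: since $M_{A,\tau}$ exceeds $1$ by an amount of order $1/t_1^{2}$, its $(n-1)$st power inflates rapidly with $n$, which is exactly what dictates the need for the extreme precision $\vartheta_1<10^{-997}$ in order to keep the Block~A contribution commensurate with the Block~B contribution in the final additive bound.
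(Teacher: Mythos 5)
Your proposal is correct and follows the same skeleton as the paper's proof: the per-zero bound $\bigl|(\rho/(\rho-\tau))^n-(\rho'/(\rho'-\tau))^n\bigr|\le n\tau\vartheta\,|\rho-\tau|^{-1}|\rho'-\tau|^{-1}\max_{x\in S}|x|^{n-1}$ from Proposition \ref{prop22}, a split of the zeros at height $49.75$ (i.e.\ at the tenth zero of $\zeta$) matching the two precisions $\vartheta_1$ and $\vartheta_0$, a uniform control of the geometric factor on each block, and the counting bound $N_F(t)\le \frac{5}{4}Kt\log t$ for the tail. The differences are in the details, and they all cut in your favour. First, for $\tau=5$ the paper does not split purely by height: it first peels off the factors whose zeros lie in $\re z\le 5/2$, where $|\rho/(\rho-\tau)|\le 1$ at \emph{every} height (this is the origin of the separate summand $19.5$), and applies the height split only to the lines $\re z=7/2$ and $\re z=9/2$; consequently its low block has $36$ zeros, whereas your Block~A has $144$ once you include the complex conjugates that your count of $72$ omits --- a bookkeeping point worth making explicit. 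Second, and more substantively, the paper bounds the ratio by $|\rho/(\rho-\tau)|\le 1+|2\re\rho-\tau|/(t-\vartheta)$ and then uses $(1+16/(n-1))^{n-1}\le e^{16}<9\cdot 10^6$ on the high block and $(1+4/|t|)^{n-1}$, resp.\ $(1+\tau/|t|)^{n-1}$, on the low block; these crude estimates are precisely what generate the advertised constants $9\cdot 10^6$, $4.9\cdot 10^{21}$ and $6.7\cdot 10^{14}$. Your sharper identity $|\rho/(\rho-\tau)|^2=1+\tau(2\re\rho-\tau)/|\rho-\tau|^2$ yields $\max_{x\in S}|x|^{n-1}\le\exp\bigl((n-1)\tau(2\re\rho-\tau)/(2t^2)\bigr)\approx 2.3$ on Block~B and $M_{A,\tau}^{n-1}\approx 10^4$ on Block~A, so your computation will not literally ``produce'' the stated coefficients but considerably smaller ones (roughly $10^4$--$10^5$ in place of $10^{14}$--$10^{21}$). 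Since the proposition asserts only upper bounds, this still proves it; just phrase the conclusion as verifying the stated inequalities a fortiori rather than as reproducing the paper's constants.
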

\begin{proof}Clearly, in the case $\tau=10$, we can just use the previous example with $K=4$, since the assumption for the $\tau$ is satisfied. We obtain the bound $\abs{\E_{app}(n,10)}\leq 52 \frac{n\vartheta_0 }{1-\vartheta_0}$.

In the case $\tau=5$, we can use the method of the proof of Proposition \ref{prop22} to treat all the factors of the function that have their zeros in the half-plane $\re z\leq \frac{5}{2}$, because then $\left|\frac{\rho}{\rho-\tau}\right|<1$. These factors produce the contribution to the error term bounded by
\begin{equation}\label{e5part1}
   3.9\frac{\vartheta _0n \tau}{1-\vartheta_0}\leq19.5 \frac{n\vartheta_0}{1-\vartheta_0}.
\end{equation}
The zeros lying on the lines $\re z=\frac{7}{2}$ and $\re z=\frac{9}{2}$ require a different approach. Start as in the proof of Proposition \ref{prop22}. Let again $S$ be the segment joining the points $\frac{\rho}{\rho-\tau}$ and $\frac{\rho'}{\rho'-\tau}$. We obtain the following inequalities
$$
\left|\left(\frac{\rho}{\rho-\tau}\right)^n-\left(\frac{\rho'}{\rho'-\tau}\right)^n\right|\leq
\frac{\vartheta n\tau}{|\rho-\tau||\rho'-\tau|}\max_{x\in
S}|x|^{n-1}\leq \frac{\vartheta n\tau}{t^2(1-\vartheta)}\max_{x\in
S}|x|^{n-1},$$ where $t=\im\rho$ and $\vartheta$ can be
$\vartheta_0$ or $\vartheta_1$ depending on the zero under
consideration. Write now $x=x_0+x_1$, where $|x_1|=1$. Then
$|x|\leq 1+|x_0|$. We now estimate how large $\abs{x_0}$ is. We have
$$
\left|\frac{\rho}{\rho-\tau}\right|=\left|\frac{\re\rho+it}{\re\rho-\tau+it}\right|\leq \left|\frac{\tau-\re\rho+it}{\re\rho-\tau+it}\right|+\left|\frac{2\re\rho-\tau}{\re\rho-\tau+ti}\right|\leq 1+\frac{|2\re\rho-\tau|}{t-\vartheta}.
$$
The same bound can be obtained  when $\rho$ is
replaced with $\rho'$, having in mind that $\re\rho=\re\rho'$.
Since $S$ is line segment the maximal modulus is attained at
one of its endpoints, the above bound is valid for all $x \in S$. Now, when $\tau=5$, we can conclude that
$$
|x|\leq 1+\frac{4}{t-\vartheta}
$$
for all $x\in S$. The
function $\left(1+\frac{x}{n}\right)^n$ approaches $e^x$ from
below, when $x$ is positive. We use this to bound the expression
$\left(1+\frac{4}{t-\vartheta}\right)^{n-1}$. In particular, we
have $\left(1+\frac{16}{n-1}\right)^{n-1}\leq e^{16}<9\cdot 10^6$.
Furthermore, $\frac{4}{t-\vartheta}\leq \frac{16}{n-1}$, when
$t-\vartheta\geq \frac{4(n-1)}{16}=\frac{n-1}{4}$. Since $n\leq
200$ in our case and $\frac{n-1}{4}\leq \frac{199}{4}=49.75$ we
are able to use given bound to estimate contribution to the error
term that comes from the zeros on the lines $\re z=\frac{7}{2}$
and $\re z=\frac{9}{2}$ such that $\im \rho\geq49.75$.  The
contribution coming from each such zero is at most
$$
9\cdot 10^6\frac{\vartheta_0 n \tau}{t^2(1-\vartheta_0)},
$$
and the total contribution coming from these zeros can be obtained using
nearly the same procedure as in the proof of Proposition \ref{prop22}. To bound number of zeros under consideration we use the fact that \eqref{N(T) bound} holds true with $5/8$ replaced by $1/4$ when $t\geq49.75$. We obtain the following bounds
\begin{eqnarray}\label{e5part2}
\sum_{\substack{\rho\in Z(F),\rho=\re\rho+it\\ 49.75\leq |t|\leq T\\ \re\rho=\frac{7}{2}\textrm{ or }\re\rho=\frac{9}{2}}}
9\cdot 10^6\frac{\vartheta_0 n \tau}{t^2(1-\vartheta_0)} &\leq& 9\cdot 10^6\frac{5n\vartheta_0}{1-\vartheta_0}
\sum_{\substack{\rho\in Z(F),\rho=\re\rho+it\\ 49.75\leq |t|\\ \re \rho=\frac{7}{2}\textrm{ or }\re\rho\frac{9}{2}}}\frac{1}{t^2}\nonumber \\&\leq& 9\cdot10^6\frac{n\vartheta_0 }{1-\vartheta_0}.
\end{eqnarray}
Now we only have to consider those zeros that have an imaginary part at most $49.75$, which corresponds to the first nine zeros of the Riemann zeta function. Remember that we are now only concerned about the zeros lying on the lines $\re \rho=\frac{7}{2}$ and $\re \rho=\frac{9}{2}$.
The total contribution coming from these zeros can be estimated as follows
\begin{equation}\label{e5part3}
\sum_{\substack{\rho\in Z(F),\rho=\re\rho+it\\|t|\leq 49.75\\ \re\rho=\frac{7}{2}\textrm{ or }\re\rho=\frac{9}{2}}}\frac{n \tau \vartheta_1}{t^2(1-\vartheta_1)}\left(1+\frac{4}{|t|}\right)^{n-1} \leq 4.9\cdot 10^{21}\frac{n \vartheta_1}{1-\vartheta_1},
\end{equation}
since $t>14$ and number of such zeros is 36.
Total error $\E_{app}(n,5)$ is obtained by adding up \eqref{e5part1}, \eqref{e5part2} and \eqref{e5part3}.

We can now move to the case $\tau=1$. We have $\left|\frac{\rho}{\rho-\tau}\right|\leq 1+\frac{1}{t-\vartheta}$, where $\im\rho=t$ and hence we may proceed as above. The only difference is that now we consider expressions $\left(1+\frac{1}{t-\vartheta}\right)^{n-1}$, where $n\leq 500$. When $\frac{1}{t-\vartheta}\leq \frac{16}{n-1}$, we can bound the expression $\left(1+\frac{1}{t-\vartheta}\right)^{n-1}$ by $e^{16}<9\cdot 10^6$. Since $\frac{n-1}{16}\leq\frac{499}{16}=31.1875$ the above bound  holds for all zeros for which we use the precision $\vartheta_0$. Procedure as in the previous case implies
\begin{eqnarray*}
\sum_{\substack{\rho\in Z(F), \rho=\re\rho+it\\ 49.75\leq |t|\leq T}}9\cdot 10^6\frac{\vartheta_0 n \tau}{t^2(1-\vartheta_0)}&\leq&9\cdot 10^6\frac{n\vartheta_0 }{1-\vartheta_0}\sum_{\substack{\rho\in Z(F), \rho=\re\rho+it\\ 49.75\leq |t|}}\frac{1}{t^2} \\&\leq& 7.2\cdot 10^6 \frac{n\vartheta_0 }{1-\vartheta_0},\end{eqnarray*}
giving the bound for the error term arising from zeros with imaginary part greater than $49.75$.
Let us now estimate the contribution coming from the zeros that correspond to the first nine zeros of the Riemann zeta function. Similarly as in the case $\tau=5$, we obtain the following bound
$$
\sum_{\substack{\rho\in Z(F), \rho=\re\rho+it\\ |t|\leq 49.75}}\frac{n\tau\vartheta_1}{t^2(1-\vartheta_1)}\left(1+\frac{\tau}{|t|}\right)^{n-1}\leq 
6.7\cdot 10^{14} \frac{n\vartheta_1}{1-\vartheta_1}.
$$
Adding up two contributions estimated above, we obtain the bound for $\E_{app}(n,1)$.
\end{proof}
In the following example numerical results for the $\tau$-Li coefficients for the product of eight Riemann zeta functions are presented with the error term calculated as estimated in propositions \ref{PropEstimate} and \ref{PropEx}. Calculations are done using Wolfram Mathematica 9 and tables of zeros of the Riemann zeta functions available at \cite{Odlyzko} are used. Extensive set of data is obtained in numerical computations. In the following example we give few graphs produced using selected data and at the end we pose some conjectures suggested by the obtained data.
\begin{example}\rm Let $$F(s)=\prod_{i=1}^{4}\zeta(s-\alpha_i)\zeta(s+\alpha_i),$$
where $\alpha_i=i$, $i=1,2,3,4$. Then,
$F\in\s^{\sharp\flat} (5, 1)$. According to Remark
\ref{RemTauInterval} natural interval for parameter $\tau$ is
$[1,10]$.

We use first 2001052 zeros of the Riemman zeta function  in our calculations, implying that the truncation is done at $T=1132490.658714411$. Values for the first 9 zeros are with the error $\vartheta_1<10^{-997}$ in the imaginary part, while the error for other zeros is $\vartheta_0<4\cdot 10^{-9}$ in the imaginary part.

We have conducted numerical investigation of $\tau-$Li coefficients for $\tau\in\{1,2,3,4,5,6,7,8,9,10\}$ and obtained the extensive set of data. At figures \ref{lam1}-\ref{Fig:nlogn} we present results for $\tau\in\{1,5,10\}$.
Those values for $\tau$ are chosen in order to represent three different situations. Namely, assuming that the Riemann hypothesis holds true,
zeros of the function $F(s)$ are located at lines $\re s=a/2$,
where $a\in\{-7,-5,-3,-1,3,5,7,9\}$, thus in the case $\tau=1$, we have $\sigma_1 -\tau/2 = \tau/2 = 1/2$ and there are no zeros of the function $F(s)$ on the line $\re\rho =1/2$, (see assertion (i), Theorem
\ref{thmLiCriterion}). In the case $\tau=5$ some zeros are
contained in the strip $-3/2\leq\re\rho\leq 5/2$, but some
are not, while in the case $\tau=10$ all zeros are in the strip
$-4\leq\re\rho\leq 5$, determined by value of $\tau$. Let us
notice that the $\tau$-Li coefficients in this case are real, i.e.
$\re\lambda_{F}(n,\tau)=\lambda_{F}(n,\tau).$

Approximate values
$\lambda^*_{F}(n,\tau,T)$ as well as the total error term $|E (n,\tau,T)| + |\E_{app}(n,\tau)|$ for $T=1132490.658714411$, $\tau=1$ and $\tau=5$  are presented
in the figures \ref{lam1} and \ref{lam5}.
\begin{figure}[!htb]
 \begin{center}
  \includegraphics[width=6.5cm]{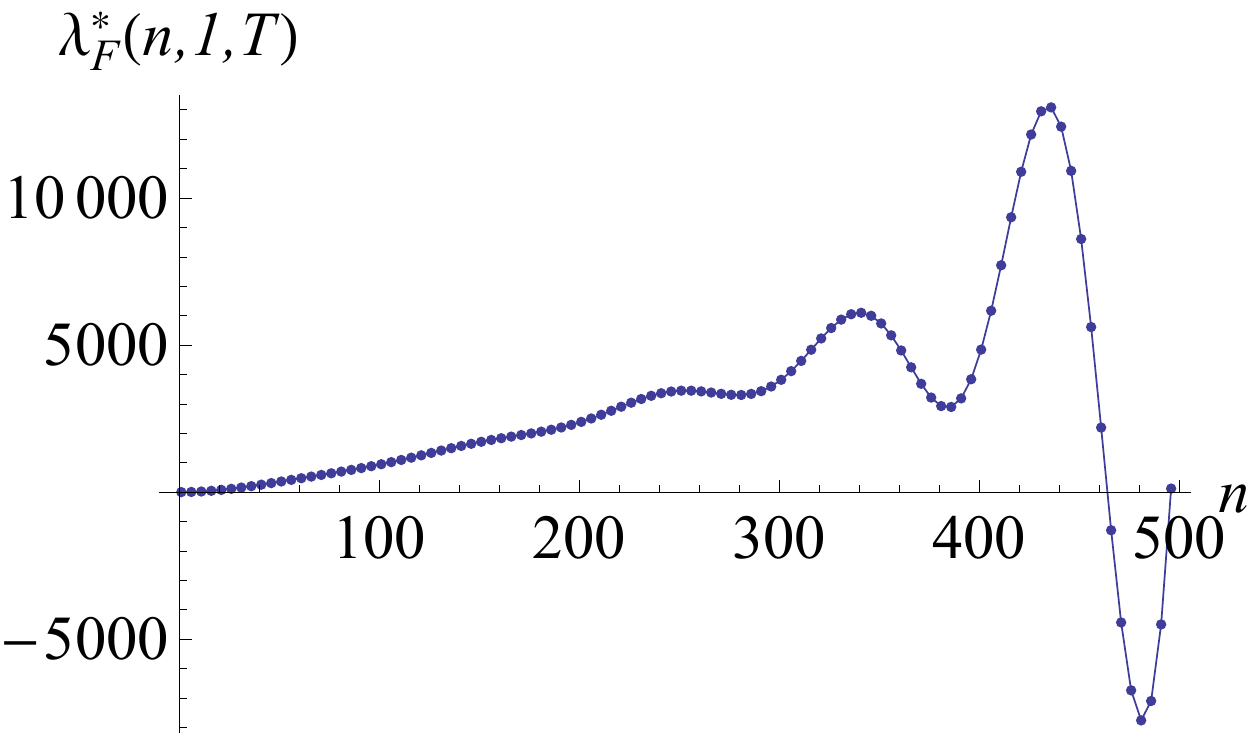} \includegraphics[width=6.5cm]{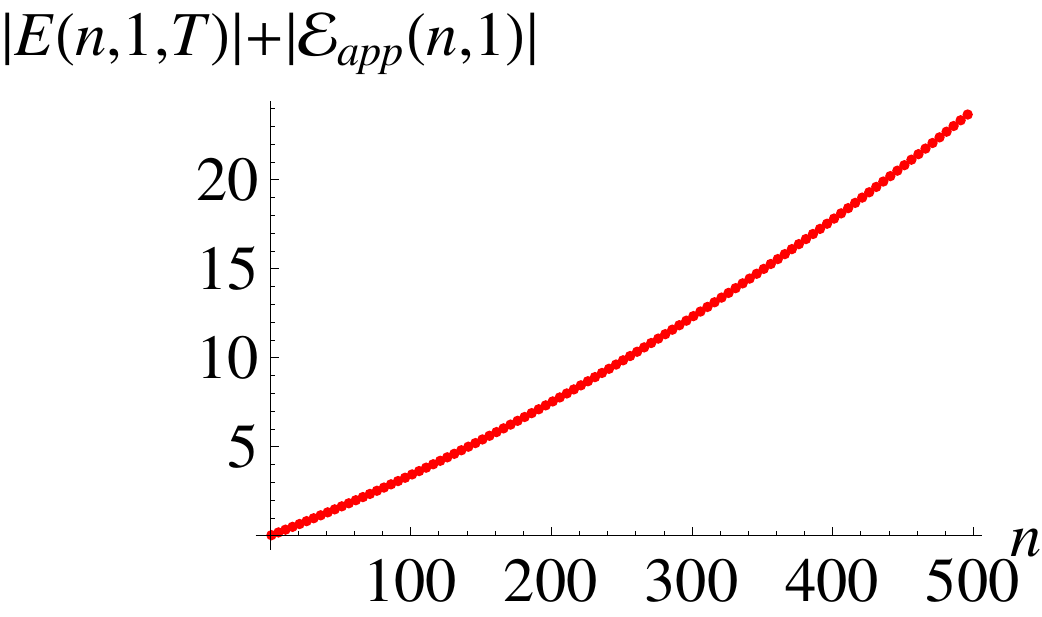}\\
  \caption{Coefficients $\lambda^*_{F}(n,1,T)$ with $T=1132490.658714411$ and bounds for the error term for $n$ from 1 to 500 calculated with step 5 }\label{lam1}
 \end{center}
\end{figure}
\begin{figure}[!htb]
 \begin{center}
  \includegraphics[width=6.5cm]{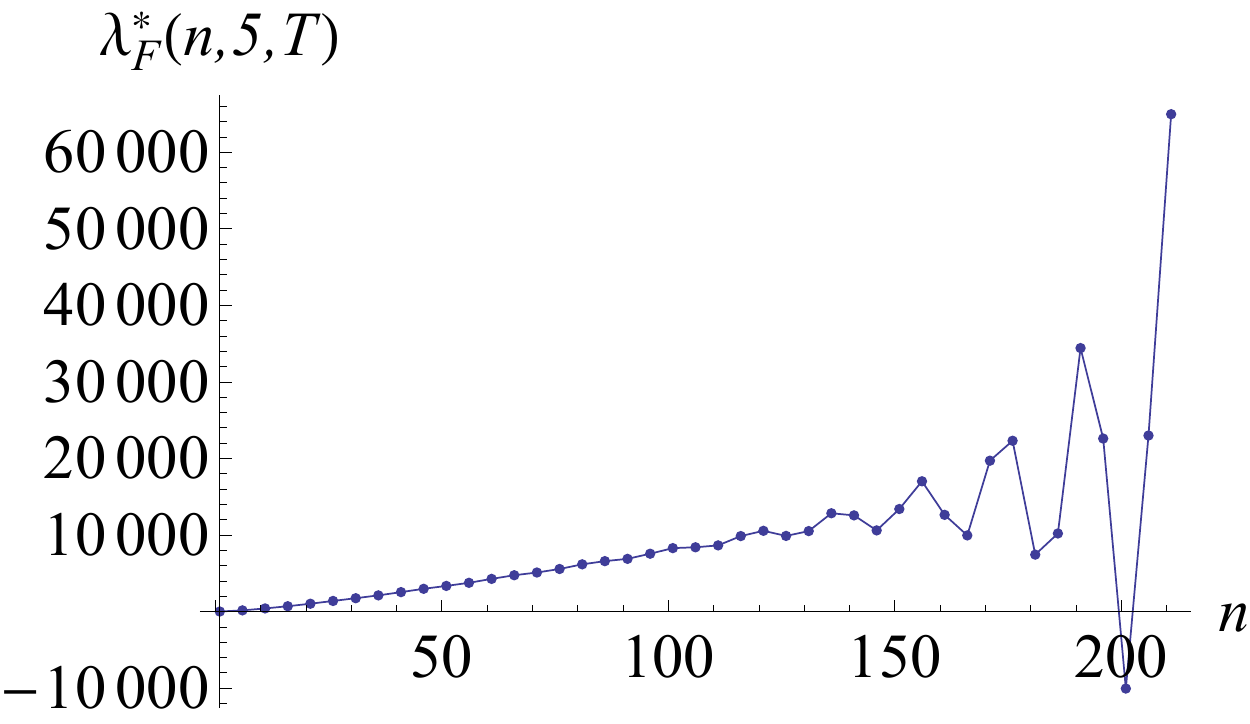} \includegraphics[width=6.5cm]{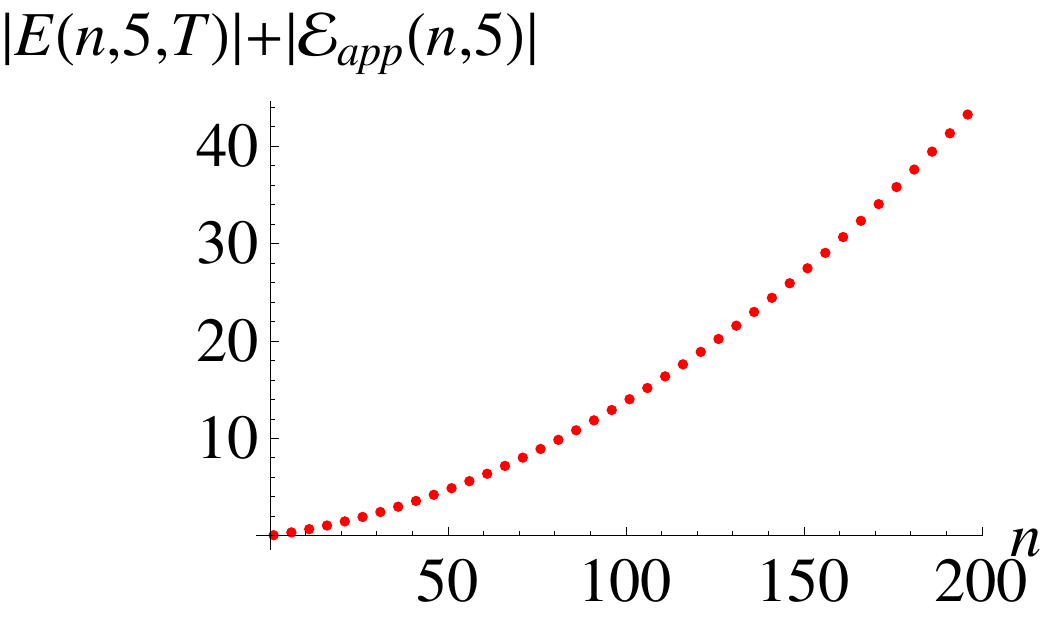}\\
  \caption{Coefficients $\lambda^*_{F}(n,5,T)$ with $T=1132490.658714411$ and bounds for the error terms for $n$ from 1 to 200 calculated with step 5 }\label{lam5}
 \end{center}
\end{figure}
Expressions for the bounds for the error terms imply that they increase with increase of $n$, as well as with increase of $\tau$. Values for these bounds presented at figures  \ref{lam1} and \ref{lam5} may seem large, but having in mind the order of magnitude of approximate values $\lambda^*_{F}(n,\tau,T)$, they are negligible, since the actual values of the $\tau$-Li coefficients belong to the segment $$\left[\lambda^*_{F}(n,\tau,T)-\abs{E(n,\tau, T)}-\abs{\E_{app}(n,\tau)},\lambda^*_{F}(n,\tau, T)+\abs{E(n,\tau, T)}+\abs{\E_{app}(n,\tau)}\right].$$

Corresponding strip in the case $\tau=10$ and the approximate values of $\tau$-Li coefficients are presented in figure \ref{l10err}. As the error term is very good, the strip is narrow, therefore one part of the strip is magnified in the figure \ref{l10err}.

\begin{center}
\begin{figure}[!htb]
 \begin{center}
  \includegraphics[width=10cm]{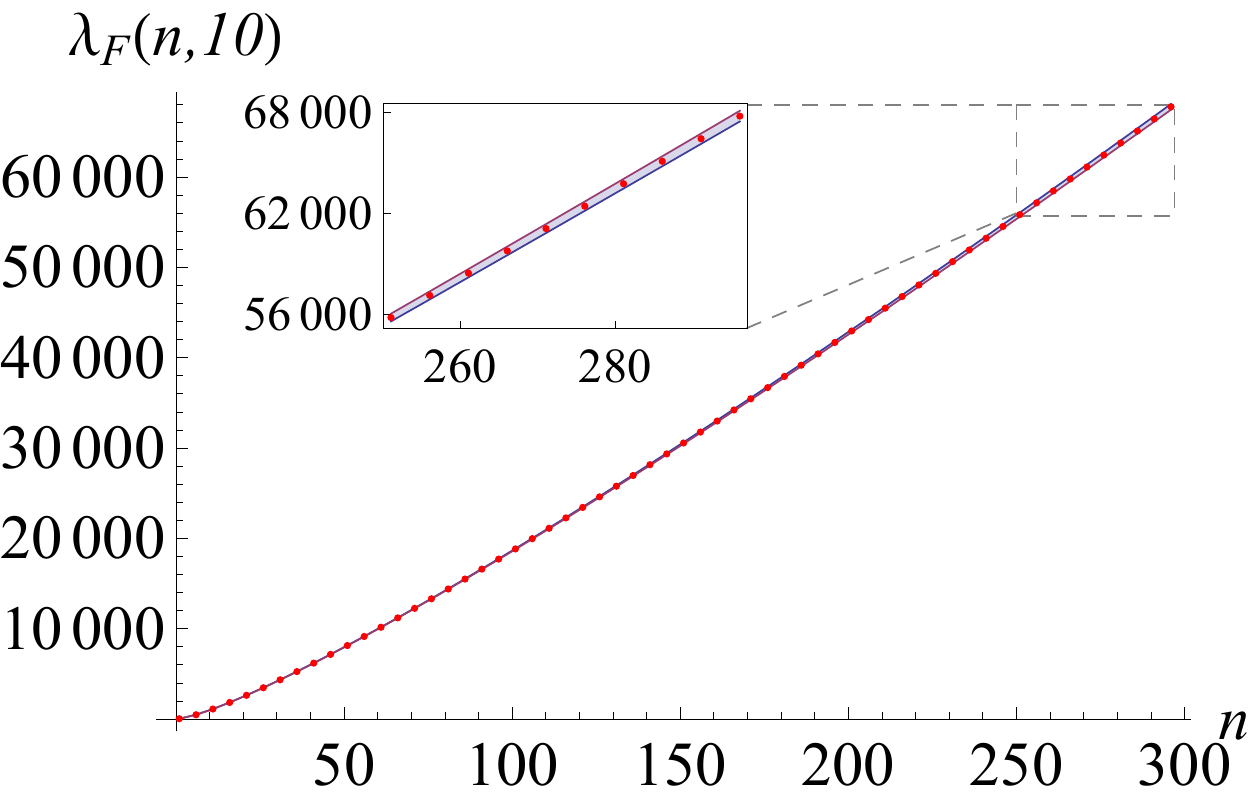}\\
  \caption{Coefficients $\lambda^*_{F}(n,10,T)$ with $T=1132490.658714411$ for $n$ for 1 to 300 calculated with step 5 (red dots) and strip (shaded area bounded by blue lines) containing actual values of coefficients $\lambda_{F}(n,10)$ determined by their approximate values and bounds for the error term }\label{l10err}
 \end{center}
\end{figure}
\end{center}
Let us emphasize that values presented in figures \ref{lam1} - \ref{l10err} are in accordance with $\tau$-Li criterion proved in Theorem \ref{thmLiCriterion}. Namely, for $\tau=1$ and $\tau=5$ assertion (i) from Theorem \ref{thmLiCriterion} obviously is not satisfied, thus some values of $\tau$-Li coefficients are negative. Such values are visible in figures \ref{lam1} and \ref{lam5}. For $\tau=10$ claim (i) from Theorem \ref{thmLiCriterion} holds true, thus $\tau$-Li coefficients are non-negative, as asserted in claim (ii), and shown in figure \ref{l10err}. Actually, the curve connecting values of $\lambda^*_{F}(n,10,T)$ is almost identical with the curve defined by the equation $y=40 x \log x$, as shown on the figure \ref{Fig:nlogn}.

\begin{center}
	\begin{figure}[!htb]
		\begin{center}
			\includegraphics[width=12cm]{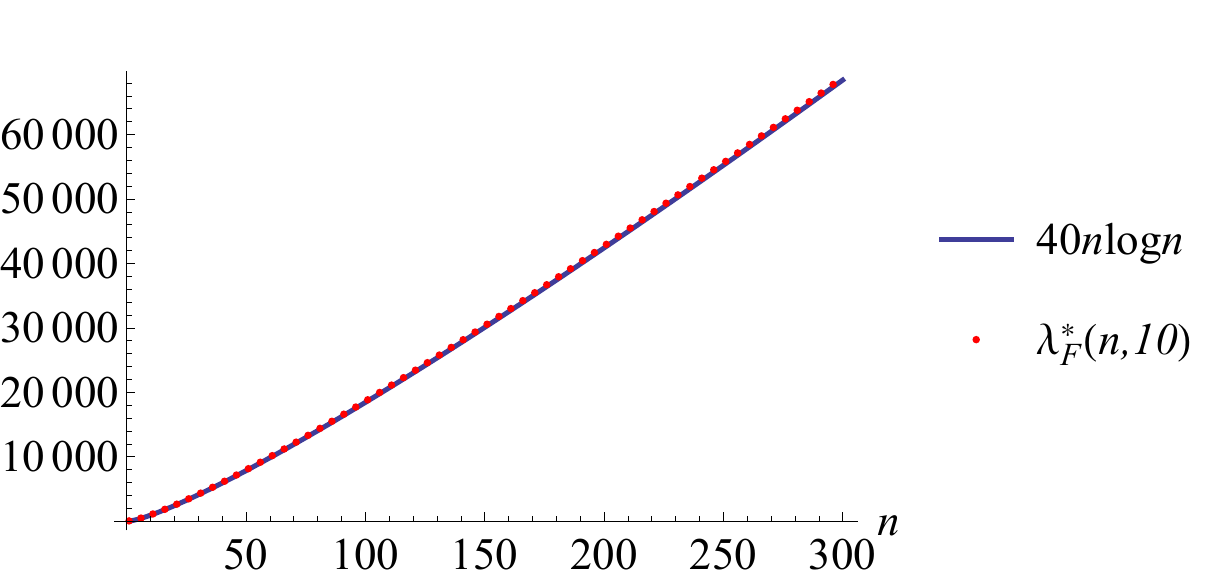}\\
			\caption{Coefficients $\lambda^*_{F}(n,10,T)$ with $T=1132490.658714411$ for $n$ for 1 to 300 calculated with step 5 and line  $y=40 n \log n$ }\label{Fig:nlogn}
		\end{center}
	\end{figure}
\end{center}

Besides that, there are some other implications suggested by the
obtained numerical evidence.
\end{example}

We conjecture that for the class $\shfs$ a
criterion relating asymptotic behavior of $\tau-$Li coefficients
and validity of $\tau-$Li criterion analogous to the criterion for the class $\shf$ and $\tau=1$ obtained in \cite{OdSm11Selberg}, Theorems 3.3. and 3.4 holds true. More precisely, we pose following two conjectures based on numerical evidence and asymptotic behavior of $\tau-$Li coefficients obtained in \cite{OdSm11Selberg} and \cite{WINEpaper}.



\begin{conjecture}
For $F\in\shfs$ non-vanishing of $F$ in the half-plane $\re(s)>\tau/2$ is equivalent to
growth of $\lambda_{F}(n,\tau)$ as $\mathcal{C}_F \tau n\log n$, as $n\to\infty$, where  $\mathcal{C}_F = \sum\limits_{j=1}^{r} \lambda_j$ and $\lambda_j$ are positive real numbers appearing in the axiom (iii').
\end{conjecture}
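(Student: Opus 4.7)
The plan is to prove the conjecture by analyzing each ingredient of the arithmetic formula from Theorem \ref{thm: arithmetic f-la 1} asymptotically as $n\to\infty$, and then establishing the two implications separately. The arithmetic formula expresses $\lambda_F(n,\tau)$ as a sum of (i) contributions from poles of $F$, (ii) a series $-\tau\sum_{m=2}^\infty c_F(m)m^{-\tau}L^1_{n-1}(\tau\log m)$ involving Laguerre polynomials, (iii) a linear term $n\tau\log Q_F$, and (iv) a gamma factor contribution $\sum_{k=1}^{n}\binom{n}{k}\frac{(\tau\lambda_j)^k}{(k-1)!}\sum_{j=1}^r\Psi^{(k-1)}(\lambda_j\tau+\mu_j)$. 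Each summand has a clear predicted order: pieces (i) and (iii) are $O(n)$; piece (ii) should be of order at most $O(\sqrt n\,\log n)$ using the classical uniform bound $|L^1_{n-1}(x)|\ll e^{x/2}n^{1/2}$ and the absolute convergence of $\sum c_F(m)m^{-\tau}$ for $\tau>\sigma_0$; and piece (iv) should provide the claimed main term $\mathcal{C}_F\tau n\log n$.

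The core of the forward direction is therefore the asymptotic evaluation of the gamma factor sum. I would first use the representation $\Psi^{(k-1)}(z)=(-1)^k(k-1)!\sum_{m\geq 0}(z+m)^{-k}$ valid for $k\geq 2$, exchange the order of summation, and apply the binomial theorem to obtain (up to the $k=1$ correction)
\begin{equation*}
\sum_{k=1}^{n}\binom{n}{k}\frac{(\tau\lambda_j)^k}{(k-1)!}\Psi^{(k-1)}(\lambda_j\tau+\mu_j)=-\sum_{m=0}^\infty\left[\left(1-\frac{\tau\lambda_j}{\lambda_j\tau+\mu_j+m}\right)^n-1\right]+n\tau\lambda_j\Psi(\lambda_j\tau+\mu_j),
\end{equation*}
after which one recognizes a Li-type series over trivial zeros of the corresponding gamma factor. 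The asymptotic $\lambda_j\tau n\log n+O(n)$ for this sum should follow from the standard Bombieri--Lagarias style argument (cf. \cite{Lagarias}, \cite{OdSm11Selberg}, \cite{WINEpaper}): split the range $m\leq n$ and $m>n$, Taylor-expand in the latter, and Abel-summation-control the former using $\sum_{m\leq n}1/(m+\mu_j/\lambda_j+\tau)\sim\log n$. Summing over $j$ yields the constant $\mathcal{C}_F=\sum_j\lambda_j$ in front.

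Once this asymptotic is established, assume non-vanishing of $F$ in $\re s>\tau/2$. By the functional equation (iii'), all non-trivial zeros lie in $\sigma_1-\tau/2\leq\re\rho\leq\tau/2$, hence $|\rho/(\rho-\tau)|\leq 1$ for every $\rho\in Z(F)$. Pairing $\rho$ with $\sigma_1-\overline{\rho}$ as in the proof of Theorem \ref{thmLiCriterion}, one sees that the defining series of $\lambda_F(n,\tau)$ produces only $O(n)$ fluctuations from these zeros, so the asymptotic $\mathcal{C}_F\tau n\log n$ extracted above from the arithmetic formula is indeed the leading term. For the converse, suppose $F$ has a zero $\rho_0$ with $\re\rho_0>\tau/2$; then $|\rho_0/(\rho_0-\tau)|>1$ strictly, and using the absolutely convergent representation of $\re\lambda_F(n,\tau)$ from Lemma \ref{tau-li are well def.} together with symmetric pairing, the contribution of $\rho_0$ and $\overline{\rho_0}$ forces $|\lambda_F(n,\tau)|$ to grow at least like $|\rho_0/(\rho_0-\tau)|^n$, contradicting polynomial growth of order $n\log n$.

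The main obstacle I expect is the rigorous asymptotic analysis of the gamma factor term: isolating the exact $\mathcal{C}_F\tau n\log n$ leading behaviour (as opposed to $O(n\log n)$ with an implicit constant) requires a careful Euler--Maclaurin or Barnes-type treatment of $\sum_{m\geq 0}[1-(1-\tau\lambda_j/(\lambda_j\tau+\mu_j+m))^n]$, handling complex $\mu_j$ with arbitrary imaginary parts uniformly in $j$. A secondary technical point is verifying that the Laguerre-polynomial series in (ii) is genuinely $o(n\log n)$ throughout the admissible range $\tau\in[\sigma_1,2\sigma_0]$, not just for $\tau>\sigma_0$; here one may need the saddle-point asymptotics of $L^1_{n-1}(\tau\log m)$ combined with subconvexity-type bounds on partial sums of $c_F(m)$, which for general $F\in\shfs$ are not automatic and may require an additional hypothesis (e.g.\ polynomial bound on $c_F$) to complete the proof unconditionally.
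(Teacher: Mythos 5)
The first thing to note is that the paper does not prove this statement at all: it is posed as a \emph{conjecture}, supported only by the numerical data in Section 6 and by analogy with the known case $F\in\shf$, $\tau=1$ (Theorems 3.3 and 3.4 of \cite{OdSm11Selberg}). So there is no proof in the paper to compare against; the question is whether your sketch actually closes the problem, and it does not. Your decomposition via Theorem \ref{thm: arithmetic f-la 1} and your identification of the main term are the natural route (and the gamma-factor analysis, once you restore the compensating terms $n\tau\lambda_j/(\lambda_j\tau+\mu_j+m)$ without which your intermediate series over $m$ diverges, does yield $\tau\lambda_j n\log n+O(n)$ per factor, hence $\mathcal{C}_F\tau n\log n$). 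But the step you label ``secondary'' --- showing that the Laguerre-polynomial sum $-\tau\sum_m c_F(m)m^{-\tau}L^1_{n-1}(\tau\log m)$ is $o(n\log n)$ --- is in fact the entire content of the forward implication, and it cannot be established unconditionally: the classical bound is $|L^1_{n-1}(x)|\le n e^{x/2}$, which gives absolute convergence only for $\tau>2\sigma_0$, i.e.\ outside the interesting range; and an unconditional $o(n\log n)$ bound, combined with the rest of your argument, would prove the zero-free region itself (for $\tau=1$, $F=\zeta$, the Riemann Hypothesis). Any correct proof must make this estimate \emph{conditional} on the zero-free region, which your sketch nowhere does.

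The forward direction as written is also circular: you assert that under non-vanishing ``the defining series produces only $O(n)$ fluctuations from these zeros,'' but the defining series \emph{is} $\lambda_F(n,\tau)$, and the zeros with $|\im\rho|\lesssim n$ on or near the line $\re s=\tau/2$ are precisely what produce the $\mathcal{C}_F\tau n\log n$ main term (via the zero-counting function $N_F(T)\sim\frac{\mathcal{C}_F}{\pi}T\log T$); they are not an $O(n)$ correction to something extracted from the arithmetic formula. For the converse, a single zero $\rho_0$ with $\re\rho_0>\tau/2$ does not ``force $|\lambda_F(n,\tau)|$ to grow at least like $|\rho_0/(\rho_0-\tau)|^n$'' for all $n$: the terms $(\rho/(\rho-\tau))^n$ oscillate, there may be infinitely many zeros off the line whose contributions interfere, and one only gets exponentially large values along a subsequence of $n$ (this is the content of the oscillation arguments in \cite{Bombieri} and \cite{OdSm11Selberg}, and is exactly what the paper's second conjecture predicts). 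In short, your proposal reproduces the heuristic that motivates the conjecture, but the two genuinely hard steps --- the conditional bound on the non-archimedean term and the quantitative oscillation from off-line zeros, both uniform over the class $\shfs$ and the range $\tau\in[\sigma_1,2\sigma_0]$ --- remain open, which is why the authors state this as a conjecture rather than a theorem.
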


\begin{conjecture}
Function $F\in\shfs$ possesses non-trivial zeros in the half-plane $\re(s)>\tau/2$ if and only if coefficients $\lambda_{F}(n,\tau)$ oscillate with exponentially growing
amplitude as $n\to\infty$.
\end{conjecture}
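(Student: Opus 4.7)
The plan is to analyze the asymptotics of $\lambda_F(n,\tau)$ as $n\to\infty$ by combining the spectral definition \eqref{tauli} with the arithmetic formula of Theorem \ref{thm:arith formula 2}. That formula splits $\lambda_F(n,\tau)$ into three contributions: polar terms involving the $s_i$ and $\sigma_1-\overline{s_i}$, a Dirichlet-coefficient contribution $\sum_{k=1}^n \binom{n}{k}\tau^k b_{k-1}$, and a gamma-factor contribution of the form $\sum_{k=1}^n \binom{n}{k}\frac{(\tau\lambda_j)^k}{(k-1)!}\sum_j \Psi^{(k-1)}(\lambda_j\tau+\mu_j)$. By analogy with the asymptotic analyses carried out in \cite{OdSm11Selberg} and \cite{WINEpaper}, the combined growth of these three pieces is expected to be $\mathcal{C}_F\tau n\log n+O(n)$, i.e.\ at most polylogarithmic in $n$ and in any case sub-exponential.

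For the forward direction, suppose $F$ has a non-trivial zero $\rho_0$ with $\re(\rho_0)>\tau/2$. A direct computation gives
\[
\left|\frac{\rho}{\rho-\tau}\right|^2 = \frac{|\rho|^2}{|\rho-\tau|^2} > 1 \iff \re(\rho) > \frac{\tau}{2},
\]
so $|\rho_0/(\rho_0-\tau)|>1$. Setting $R=\sup_{\rho\in Z(F)}|\rho/(\rho-\tau)|$, this supremum is attained on a finite set $S_R$ of dominant zeros, because $|\rho/(\rho-\tau)|\to 1$ as $|\im\rho|\to\infty$ uniformly on the critical strip. Isolating $S_R$ in \eqref{tauli} produces the heuristic decomposition
\[
\lambda_F(n,\tau) = -\sum_{\rho\in S_R}\left(\frac{\rho}{\rho-\tau}\right)^n + o(R^n).
\]
The functional equation (iii') pairs each $\rho\in S_R$ with $\sigma_1-\overline{\rho}$, whose ratio has modulus strictly less than $1$, so dominant zeros necessarily lie off the real axis; their arguments $\arg(\rho/(\rho-\tau))$ are non-zero, and $\re(\lambda_F(n,\tau))$ oscillates with amplitude $\asymp R^n$.

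For the converse, if all non-trivial zeros satisfy $\re(\rho)\leq\tau/2$, then $|\rho/(\rho-\tau)|\leq 1$ for every $\rho\in Z(F)$, so each summand in \eqref{tauli} is bounded in modulus by $2$. Combining the absolute convergence of $\re(\lambda_F(n,\tau))$ given by Lemma \ref{tau-li are well def.}(ii) with the sub-exponential control of the three non-spectral contributions, one concludes that $|\lambda_F(n,\tau)|$ grows at most polynomially (conjecturally as $\mathcal{C}_F\tau n\log n$), excluding exponentially growing oscillation.

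The principal obstacles are threefold. First, quantifying the dominance of $S_R$ requires an effective estimate on the subdominant tail $\left.\sum_{\rho\notin S_R}\right.^{\ast}(\rho/(\rho-\tau))^n$; since this is only $\ast$-convergent rather than absolutely convergent, a careful regularization is required, most naturally via the explicit formula of Proposition \ref{explicit f-la} applied to a test function tailored to recover $(\rho/(\rho-\tau))^n$. Second, ruling out destructive interference among the elements of $S_R$ demands a lower bound $\bigl|\sum_{\rho\in S_R}(\rho/(\rho-\tau))^n\bigr|\gg R^n$ along a density-one subsequence of $n$, which is an equidistribution statement on the arguments of the dominant ratios. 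Third, the $\mathcal{C}_F\tau n\log n$ bound for the gamma and Dirichlet parts uniformly in $\shfs$ is essentially the content of Conjecture 1, so the two conjectures are intrinsically linked and should most naturally be established in tandem.
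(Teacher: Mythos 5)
You should first be clear that the paper does not prove this statement: it is posed explicitly as a conjecture, supported only by the numerical data of Section 6 and by analogy with Theorems 3.3 and 3.4 of \cite{OdSm11Selberg}, which establish the corresponding equivalence for $\shf$ and $\tau=1$. There is therefore no proof in the paper to compare yours against, and what you have written is, by your own admission, a plan with three unresolved obstacles rather than a proof; those obstacles are precisely where all the mathematical content lies.

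Beyond that, two of your intermediate steps are already shaky. The decomposition $\lambda_F(n,\tau)=-\sum_{\rho\in S_R}(\rho/(\rho-\tau))^n+o(R^n)$ is not justified: the set of zeros with $\abs{\rho/(\rho-\tau)}>1$ may be infinite (all but finitely many with ratio close to $1$), the corresponding subsum is only $\ast$-convergent, and with $\gg T\log T$ zeros up to height $T$ the tail cannot be bounded by $o(R^n)$ uniformly in $n$ by termwise estimates. The established route for $\tau=1$ in \cite{OdSm11Selberg} avoids both this and your equidistribution obstacle: one studies the generating function $\sum_n \tau^{n+1}\lambda_F(n+1,\tau)z^n$, which by \eqref{defcoefdF} and \eqref{tauLiOtherDef} is the Taylor expansion of $\frac{d}{ds}\log\xi_F\left(\frac{1}{1-s}\right)$ at $z_0=1-1/\tau$; a zero in the half-plane $\re s>\tau/2$ becomes a singularity strictly inside the disc of radius $1/\tau$ about $z_0$, and classical results on Taylor coefficients of functions with such singularities force exponential oscillation without any non-cancellation lemma. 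Separately, your claim that the functional equation forces dominant zeros off the real axis is a non sequitur: axiom (iii') pairs $\rho$ with $\sigma_1-\overline{\rho}$, not with $\overline{\rho}$, so a real zero $\rho_0>\tau$ is not excluded a priori, and for such a zero $(\rho_0/(\rho_0-\tau))^n$ is positive and growing, producing monotone divergence of $\lambda_F(n,\tau)$ to $-\infty$ rather than oscillation --- a case neither your argument nor the conjecture's phrasing addresses. Finally, your converse direction silently assumes the $O(n\log n)$ bound on the archimedean and Dirichlet parts, which is essentially Conjecture 1 and equally unproved.
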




\begin{thebibliography}{99}


\bibitem{AvdispahicSmajlovic} M. Avdispahi\' c, L. Smajlovi\' c, $\phi$$-$variation and Barner$-$Weil formula,
Math. Balkanica 17(3-4) (2003), 267-289.

\bibitem{AvdispahicSmajlovicI} M. Avdispahi\' c, L. Smajlovi\' c, Explicit
formula for a fundamental class of functions, Bull. Belg. Math. Soc. Simon
Stevin 12 (2005), 569-587.

\bibitem{AvdispahicSmajlovicII} M. Avdispahi\' c, L. Smajlovi\' c, A note on Weil's explicit formula, in: A. Y. Krehnnikov, Z. Raki\'c, I. V. Volovich (Eds.), $p-$adic Mathematical Physics: 2nd International Conference on $p-$adic Mathematical Physics, American Institute of Physics, New York, 2006, pp. 312-319.

\bibitem{Bombieri}  E. Bombieri and J. C. Lagarias, Complements to
Li's criterion for the Riemann hypothesis, J. Number Theory 77 (1999),
274-287.

\bibitem{WINEpaper} A. Bucur, A.-M. Ernvall-Hyt\" onen, A. Od\v zak, E. Roditty-Gershon, L. Smajlovi\'c, On $\tau-$Li coefficients for Rankin-Selberg $L-$functions, submitted for publication.

\bibitem{Dr12} A. D. Droll, Variations of Li's criterion for an extension of the Selberg class, PhD thesis, Queen's University Ontario, Canada, 2012.

\bibitem{Freitas} P. Freitas, A Li-type criterion for zero-free half-planes of Riemann's zeta function, J. London Math, Soc. 73 (2006), 399-414.

\bibitem{Jorg-Lang RP} J. Jorgenson, S. Lang, Basic
analysis of regularized products and series, Lecture Notes in Mathematics
1564, Springer-Verlag, Berlin-Heidelberg, 1993.

\bibitem{JorgensonLang} J. Jorgenson, S. Lang, Explicit formulas for
regularized products and series, Lecture Notes in Mathematics 1593,
Springer-Verlag, Berlin-Heidelberg, 1994.

\bibitem{Kacz-PerelliActa} J. Kaczorowski, A. Perelli, On the structure of the Selberg class, I: $0 \leq d
\leq 1$, Acta Math. 182 (1999), 207-241.

\bibitem{KarKor} A. A. Karatsuba, M. A. Korolev, The argument of the Riemann zeta function, Russ. Math. Surv. 60(3) (2005), 433-488.

\bibitem{Lagarias}  J. C. Lagarias, Li's coefficients for
automorphic $L$-functions, Ann. Inst. Fourier 57 (2007), 1689-1740.

\bibitem{XJLi}  X.-J. Li, The positivity of a sequence of numbers
and the Riemann hypothesis, J. Number Theory 65 (1997), 325-333.

\bibitem{XJLi04}  X.-J. Li, Explicit formulas for Dirichlet and Hecke $L-$functions, Ill. J. Math. 48 (2004), 491-503.

\bibitem{OdSm09}  A. Od\v{z}ak, L. Smajlovi\'{c}, On Li's coefficients for
the Rankin-Selberg L -functions, Ramanujan J. 21 (2010), 303-334.

\bibitem{OdSm11Selberg}  A. Od\v{z}ak, L. Smajlovi\'{c}, On asymptotic behavior of generalized Li coefficients in the Selberg class, J. Number Theory 131 (2011), 519-535.

\bibitem{Odlyzko} A. Odlyzko, Tables of zeros of the Riemann zeta function, http://www.dtc.umn.edu/~odlyzko//zetatables/index.html

\bibitem{Selberg}  A. Selberg, Old and new conjectures and results
about a class of Dirichlet series, in: E. Bombieri et al. (Eds.), Proc. Amalfi Conf. Analytic Number
Theory, Universitia di Salerno, 1992, pp. 367-385.

\bibitem{Sm10} L. Smajlovi\'c, On Li's criterion for the Riemann hypothesis for the Selberg class, J. Number Theory 130 (2010), 828-851.

\end{thebibliography}
\end{document}